\newtheorem{theorem}{Theorem}[section]
\newtheorem{corollary}{Corollary}
\newtheorem{lemma}[theorem]{Lemma}
\newtheorem{proposition}{Proposition}
\theoremstyle{definition}
\newtheorem{definition}[theorem]{Definition}
\newtheorem{remark}{Remark}
\title[Inverse Problem For the Magnetic Schr\"{o}dinger Operator]
{An inverse problem for the magnetic Schr\"{o}dinger operator on Riemannian manifolds from partial boundary data}
\author[Sombuddha Bhattacharyya]{}
\subjclass{Primary: 58F15, 31B20;  Secondary: 35J25.}
 \keywords{Differential geometry, inverse problems, Calder\'{o}n problem, geometric analysis, partial differential equations.}
 \email{arkatifr@gmail.com}
\newcommand{\B}[1]{B^{(#1)}}
\newcommand{\q}[1]{q^{(#1)}}
\newcommand{\p}[1]{{#1}^{\prime}}
\newcommand{\e}{\epsilon}
\newcommand{\R}{\mathbb{R}^{n}_{+}}
\newcommand{\D}{\text{ d}}
\newcommand{\g}{\mathfrak{g}}
\begin{document}
\maketitle

\centerline{\scshape Sombuddha Bhattacharyya}
\medskip
{\footnotesize
 \centerline{Tata Institute of Fundamental Research}
  \centerline{Centre for Applicable Mathematics}
   \centerline{Bangalore, India}

} 

\bigskip

 \centerline{(Communicated by Hao-Min Zhou)}

\begin{abstract}
We consider the inverse problem of recovering the magnetic and potential term of a magnetic Schr\"{o}dinger operator on certain compact Riemannian manifolds with boundary from partial Dirichlet and Neumann data on suitable subsets of the  boundary. The uniqueness proof relies  on proving a suitable Carleman estimate for functions which vanish only on a part of boundary and constructing complex geometric optics solutions which vanish on a part of the boundary.
\end{abstract}


\section{Introduction and statement of the main result}
In this article, we consider a Calder\'{o}n type inverse problem involving the magnetic Schr\"odinger operator on a compact Riemannian manifold  with boundary with available Dirichlet and Neumann measurements on suitable subsets of the boundary.
To define the type of domain we are interested in let us first define the notion of a simple manifold.
\begin{definition}
A hypersurface is \textbf{strictly convex} if the second fundamental form is positive definite.
\end{definition}

\begin{definition}[Simple Manifold]
A Riemannian manifold $(M,g)$ with boundary is \textbf{simple} if $\partial M$ is strictly convex, and for any point $x\in M$ the exponential map $\mbox{exp}_{x}$ is a diffeomorphism from its domain in $T_{x}M$ onto $M$.
\end{definition}

The Riemannian manifold that we consider is of the following type.
\begin{definition}[Admissible manifold \cite{F_K_S_U,K_S}]\label{amissible_definition}
We say a compact Riemannian manifold with boundary $(M,g)$ is  \textbf{admissible} if
	\begin{enumerate}
		\item	$n =$ dim($M$) $\geq 3$
		\item	$M$ is orientable
		\item	$(M,g)$ is conformal to a sub manifold (with boundary) of $\mathbb{R} \times (M_0, g_0)$ where $(M_0,g_0)$ is a compact, simple $(n-1)$ dimensional Riemannian manifold.	
	\end{enumerate}
\end{definition}

Let  $(M,g)$ to be an admissible Riemannian manifold and let $B = (B_1, B_2, \dots , B_n)$ be a smooth complex valued 1-form on $M$ and $q \in L^{\infty}(M)$ be a complex valued function.

We define the magnetic Schr\"{o}dinger operator on $M$ as
\begin{equation} \label{mag_schrd_operator}
\mathcal{L}_{B,q} := d_{\overline{B}}^* d_B + q,
\end{equation}
where ${d_B} = d + iB\wedge : C^{\infty}(M) \rightarrow \Omega^1(M)$ and $d_{{B}}^*$ is the formal adjoint
of $d_B$ (for the sesquilinear inner product induced by the Hodge dual on
the exterior form algebra). Here $\Omega^1(M)$ denotes the collection of all 1-forms on $M$.

In local coordinates
\begin{equation}\label{Magnetic Schrodinger local coordinates}
d_{\overline{B}}^*d_B u = -\lvert g \rvert^{-1/2} (\partial_{x_k} + iB_k)(\lvert g \rvert^{1/2}g^{jk}(\partial_{x_j}+ iB_j)u),
\end{equation}
where $|g|=\det(g)$ and the sum is over the indices which are repeating. Through out this article we assume this summation convention that repeated indices are implicitly summed over.
Simplifying Equation \eqref{Magnetic Schrodinger local coordinates}, we get,
\begin{equation}\label{step 1}
\begin{aligned}
\mathcal{L}_{B,q} =	- \Delta_g u - 2i\langle B,du \rangle_g	+ \tilde{q}u,	
\end{aligned}
\end{equation}
where $\tilde{q} = q - i[\lvert g \rvert^{-1/2}\partial_{x_k}(\lvert g \rvert^{1/2}g^{jk}B_j)] + \lvert B \rvert^2_g$.

We assume throughout that $0$ is not an eigenvalue of $\mathcal{L}_{B,q}$ in $M$ and consider the Dirichlet problem
\begin{equation} \label{mag_schrd_eqn}
\begin{aligned}
\mathcal{L}_{B,q} u &= 0 \qquad \mbox{in}\quad M\\
u&=f \qquad \mbox{on} \quad \partial M.
\end{aligned}
\end{equation}

We define the Dirichlet to Neumann map $\Lambda_{B,q}$ as follows:
\begin{align*}
\Lambda_{B,q} : f \rightarrow d_B u(\nu)|_{\partial M}, \quad  f\in H^{1/2}(\partial M),
\end{align*}
where in local coordinates
\begin{align*}
d_B u(\nu) = \nu_j g^{jk} (\partial_k + iB_k)u|_{\partial M}.
\end{align*}

Our goal is to recover the coefficients $B$ and $q$ in $\Omega$ from the knowledge of $\Lambda_{B,q}(f)$ measured on a part of boundary and with $f$ supported on a different part of boundary.

Now we define the subsets of the boundary where we have the boundary information.
Write $x=(x_1,x')$ for points in $\mathbb{R}\times M_0$, where $x_1$ is the Euclidean coordinate.

The function $\phi(x) = x_1$ allows us to decompose the boundary $\partial M$ as the disjoint union
\begin{equation*}
\partial M = \partial_{-} M \cup \partial_{+}M,
\end{equation*}
where
\begin{equation*}
\begin{aligned}
\partial_{-}M &= \{ x\in \partial M : \partial_\nu \phi \leq 0 \}, \\
\partial_{+}M &= \{ x\in \partial M :\partial_\nu \phi \geq 0 \}.
\end{aligned}
\end{equation*}
Here $\partial_\nu \phi$ is with respect to the metric $g$.

Define $\Gamma_D$ and $\Gamma_N$ open subsets of $\partial M$ so that
\begin{equation*}
\Gamma_{N} \supset \partial_{-} M  \quad \mbox{and } \quad
\Gamma_D \supset \partial_{+}M.
\end{equation*}
Hence, $\Gamma_D \cup \Gamma_N = \partial M$.

We define the boundary data as
\begin{equation}\label{Cauchy data}
\mathcal{C}^{\Gamma_D,\Gamma_N}_{B,q} (M) = \{(u|_{\Gamma_D}, d_Bu(\nu)|_{\Gamma_N}):\mathcal{L}_{B,q}u=0,\ \mbox{supp}(u|_{\partial M}) \subset \Gamma_D \},
\end{equation}
where supp$(.)$ denotes support.

\begin{lemma}[Gauge invariance] \label{Gauge_inv}
Let $B,q$ be as above and $\Phi \in C^2(M)$ be such that $\Phi|_{\partial \Omega} = 0$, then we have
\begin{equation*}
\Lambda_{B+d\Phi,q} = \Lambda_{B,q} \qquad on \quad \partial \Omega.
\end{equation*}
\end{lemma}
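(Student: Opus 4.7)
The plan is to prove the result by transporting solutions of the Dirichlet problem from $\mathcal{L}_{B,q}$ to $\mathcal{L}_{B+d\Phi,q}$ via the multiplier $e^{-i\Phi}$, which acts as the identity on $\partial M$ by the hypothesis $\Phi|_{\partial M}=0$. The key observation is the intertwining identity
\[
\mathcal{L}_{B+d\Phi,q}\bigl(e^{-i\Phi} u\bigr) \;=\; e^{-i\Phi}\, \mathcal{L}_{B,q}\,u,
\]
which reduces the lemma to a short cancellation.

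First I would establish the pointwise formula for the magnetic derivative itself. Using the local expression \eqref{Magnetic Schrodinger local coordinates}, a direct computation yields
\[
(\partial_{x_j} + i(B_j + \partial_{x_j}\Phi))(e^{-i\Phi} u) \;=\; e^{-i\Phi}(\partial_{x_j} + iB_j)u,
\]
because the contributions $-i(\partial_{x_j}\Phi)e^{-i\Phi}u$ from differentiating $e^{-i\Phi}$ and $+i(\partial_{x_j}\Phi)e^{-i\Phi}u$ from the new gauge term cancel exactly. Applying the same cancellation at the level of the outer factor $(\partial_{x_k} + iB_k)$ in \eqref{Magnetic Schrodinger local coordinates} promotes this to $\mathcal{L}_{B+d\Phi,0}(e^{-i\Phi}u) = e^{-i\Phi}\mathcal{L}_{B,0}u$, and since the potential $q$ commutes with the multiplier $e^{-i\Phi}$ the full intertwining identity above follows.

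Next, given $f\in H^{1/2}(\partial M)$ and the unique solution $u$ of \eqref{mag_schrd_eqn} with $u|_{\partial M}=f$, I would set $v := e^{-i\Phi}u$. Because $\Phi|_{\partial M}=0$ we have $e^{-i\Phi}|_{\partial M}\equiv 1$, so $v|_{\partial M}=f$; together with the intertwining identity this shows $v$ is the unique solution of the Dirichlet problem for $\mathcal{L}_{B+d\Phi,q}$ with the same boundary value $f$. For the Neumann data, the pointwise formula for the magnetic derivative and the same boundary identity give
\[
d_{B+d\Phi}v(\nu)\big|_{\partial M} \;=\; e^{-i\Phi}\big|_{\partial M}\, d_B u(\nu)\big|_{\partial M} \;=\; d_B u(\nu)\big|_{\partial M},
\]
which is exactly $\Lambda_{B+d\Phi,q}f = \Lambda_{B,q}f$.

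There is no genuine obstacle here: the entire argument is a one-step gauge conjugation, and the only things to verify are the local cancellation of the $\partial_{x_j}\Phi$ terms and the fact that $e^{\pm i\Phi}$ restricts to $1$ on $\partial M$. The mildest subtlety is checking that the conjugation also intertwines the adjoint $d^*_{\overline{B}}$; this is automatic from \eqref{Magnetic Schrodinger local coordinates}, since that formula already realises $d^*_{\overline{B}}d_B$ as one iterated operator in which every occurrence of $B$ comes with the same $+i$, so the cancellation applies twice in the same form.
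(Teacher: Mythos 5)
Your argument is correct and is exactly the ``straightforward calculation'' that the paper delegates to \cite{DOS}: conjugation by $e^{-i\Phi}$ intertwines $\mathcal{L}_{B,q}$ with $\mathcal{L}_{B+d\Phi,q}$, and since $e^{-i\Phi}\equiv 1$ on $\partial M$ this leaves both the Dirichlet trace and the magnetic Neumann trace $d_Bu(\nu)$ unchanged. You correctly observe that the local expression \eqref{Magnetic Schrodinger local coordinates} already packages $d^*_{\overline B}d_B$ so that each occurrence of $B$ appears as $+iB$, which is why the same one-line cancellation handles both factors; this sidesteps the need to separately track how the conjugation interacts with the conjugate $\overline B$ hidden in the formal adjoint.
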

\begin{proof}
The proof follows from a straight forward calculation (see \cite{DOS}).
\end{proof}
The above lemma shows that one can recover $B$ only up to a term of the form $d \Phi = (\partial_{x_i}\Phi)\,dx^i$ with $\Phi=0$ on $\partial M$ from the boundary data \eqref{Cauchy data}.
Now we now state the main result of the article. Before that we mention that if $v$ is an 1-form defined on $M$, then $dv$ is the 2-form defined as
\begin{equation}\label{ext_der}
(dv)_{ij} = \frac{1}{2}\left( \frac{\partial v_i}{\partial x_j} - \frac{\partial v_j}{\partial x_i} \right).
\end{equation}

\begin{theorem}\label{main_theorem}
Let $(M,g)$ be simply connected and admissible,  $\B{1}$ and $\B{2}$ be two smooth complex valued 1-forms in $M$ with $\B{1}=\B{2}$ on $\partial M$ and let $\q{1}, \q{2}$ be two complex valued $L^{\infty}$ functions on $M$ such that $0$ is not an eigenvalue of $\mathcal{L}_{\B{j},\q{j}}$ for $j=1,2$.
If
\begin{equation*}
\mathcal{C}^{\Gamma_D,\Gamma_N}_{\B{1},\q{1}}(M) = \mathcal{C}^{\Gamma_D,\Gamma_N}_{\B{2},\q{2}}(M),
\end{equation*}
then $d\B{1} = d\B{2}$ and $\q{1}=\q{2}$ in $M$.
\end{theorem}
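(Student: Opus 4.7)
The plan is to implement the Bukhgeim--Uhlmann strategy for partial-data Calder\'on-type problems, adapted to the magnetic Schr\"odinger operator on admissible manifolds in the spirit of the results cited in the excerpt. The first step is to convert the hypothesis $\mathcal{C}^{\Gamma_D,\Gamma_N}_{\B{1},\q{1}}(M)=\mathcal{C}^{\Gamma_D,\Gamma_N}_{\B{2},\q{2}}(M)$ into an integral identity of the form
\begin{equation*}
\int_M \left[-2i\langle \B{1}-\B{2},\,du_1\rangle_g + (\widetilde{\q{1}}-\widetilde{\q{2}})\,u_1\right]\overline{u_2}\,dV_g = 0,
\end{equation*}
valid for every pair $u_1$ solving $\mathcal{L}_{\B{1},\q{1}}u_1=0$ with $\mathrm{supp}(u_1|_{\partial M})\subset \Gamma_D$, and every $u_2$ solving the formal adjoint $\mathcal{L}_{\overline{\B{2}},\overline{\q{2}}}u_2=0$ with $u_2|_{\Gamma_N}=0$. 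The derivation uses Green's formula together with the simplified expression \eqref{step 1} and cancellations on $\Gamma_D$ (matching Dirichlet data) and on $\Gamma_N$ (matching Neumann data or vanishing test function).

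\textbf{Carleman estimate and CGO solutions.} The second step is to produce CGO solutions in both of the above classes. Using the admissibility assumption write, up to a conformal change, $g = c(e\oplus g_0)$ on a subdomain of $\mathbb{R}\times M_0$ and take the limiting Carleman weight $\varphi(x)=x_1$, which is non-degenerate and induces the splitting $\partial M=\partial_-M\cup\partial_+M$. The analytic core is a boundary Carleman estimate of the form
\begin{equation*}
h\lVert v\rVert_{L^2(M)}^2 + h^2\lVert \nabla v\rVert_{L^2(M)}^2 - h\int_{\partial_+ M}\partial_\nu\varphi\,|\partial_\nu v|^2\,dS \leq C\,\bigl\lVert e^{\varphi/h}\mathcal{L}_{B,q}\bigl(e^{-\varphi/h}v\bigr)\bigr\rVert_{L^2(M)}^2
\end{equation*}
for $v\in C^\infty(\overline{M})$ with $v|_{\partial M}=0$, and a dual version with $\partial_- M$. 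By a Hahn--Banach argument this yields solvability for the conjugated operator with a remainder vanishing on the prescribed part of $\partial M$. The CGOs then take the WKB form $u = e^{-(\varphi+i\psi)/h}(a + r_h)$, with $\psi$ solving the eikonal equation along a non-tangential geodesic of $(M_0,g_0)$, $a$ solving a first-order transport equation along the same geodesic (which incorporates the magnetic potential), and $r_h$ an $O(h)$ correction in $L^2(M)$ vanishing on the required boundary piece.

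\textbf{Passage to the limit and ray transform inversion.} Substituting CGOs for $u_1$ and $u_2$ concentrating along a common non-tangential geodesic $\gamma$ of $(M_0,g_0)$ into the integral identity and sending $h\to 0$, the leading-order term yields the vanishing of a (possibly attenuated) geodesic ray transform on $(M_0,g_0)$ applied to components of $\B{1}-\B{2}$ and of $\q{1}-\q{2}$. By varying $\gamma$ over all non-tangential geodesics and varying the additional frequency parameter in the CGO phase, injectivity of the geodesic ray transform on simple manifolds (for functions and for solenoidal $1$-forms) forces $d(\B{1}-\B{2})=0$ in $M$ and $\q{1}=\q{2}$. Since $M$ is simply connected, the closed $1$-form $\B{1}-\B{2}$ is exact, say $\B{1}-\B{2}=d\Phi$, and the boundary hypothesis $\B{1}=\B{2}$ on $\partial M$ combined with Lemma~\ref{Gauge_inv} finishes the argument; in particular $d\B{1}=d\B{2}$ on $M$.

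\textbf{Main obstacle.} The principal difficulty is the boundary Carleman estimate: one must establish an estimate with a definite sign on the $\partial_+ M$ (resp.\ $\partial_- M$) boundary term, \emph{strong enough} to admit a solvability theory for the conjugated operator with prescribed vanishing on the \emph{complementary} boundary piece. This is what makes the CGO remainders compatible with the support restriction $\mathrm{supp}(u_1|_{\partial M})\subset \Gamma_D$ and $u_2|_{\Gamma_N}=0$, and therefore what makes the partial-data integral identity usable. A secondary technical point is the 1-form ray transform: since only the solenoidal part is visible, one must combine the transport information with Stokes' theorem on the simply-connected $M$ to promote the $M_0$-level conclusion to the statement $d\B{1}=d\B{2}$ on $M$.
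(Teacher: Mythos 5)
Your outline follows the overall Bukhgeim--Uhlmann/Kenig--Salo architecture, but it collapses two genuinely different analytic ingredients into one, and the step where you claim to get CGO solutions with prescribed boundary support is exactly where the argument would fail.

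\textbf{The main gap.} You write that the boundary Carleman estimate (your display with the $\partial_+M$ term, valid for $v|_{\partial M}=0$) together with Hahn--Banach ``yields solvability for the conjugated operator with a remainder vanishing on the prescribed part of $\partial M$.'' That is not what this estimate gives. By duality, an $L^2$ Carleman estimate for $v$ vanishing on \emph{all} of $\partial M$ with a boundary term $\int_{\partial_+M}\partial_\nu\varphi\,|\partial_\nu v|^2$ produces solvability of the adjoint equation with \emph{no} boundary constraint on the solution, plus control of that boundary term on the difference $u_1-u_2$ (this is precisely the Bukhgeim--Uhlmann mechanism, where the Dirichlet data is allowed to have full support). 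To manufacture $u_1$ with $\mathrm{supp}(u_1|_{\partial M})\subset\Gamma_D$ one needs a second, structurally different Carleman estimate: an $H^{-1}$-type estimate of the form
\begin{equation*}
h\,\lVert u\rVert_{L^2(M)} \;\le\; C\,\lVert \mathcal{L}_{\phi,B,q}\,u\rVert_{H^{-1}(\Omega)},\qquad u\in C_c^\infty(\Omega),
\end{equation*}
where $\Omega$ is a strictly larger domain with $E=\{\partial_\nu\varphi\le -\delta\}\subset\partial\Omega$, so that admissible $u$ vanish near $E$. The change of norm from $L^2$ to $H^{-1}$ and the enlargement $M\subset\Omega$ are not cosmetic; they are what make the estimate hold for a class of functions that does not vanish on all of $\partial M$, and hence what make the dual solvability statement come with the prescribed vanishing on $E$. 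This is the bulk of Sections~\ref{sec_car_est}--\ref{int_Car_est} of the paper and is the hard part of the theorem; your proposal does not contain it or an equivalent substitute, so your construction of $u_1$ would not go through.

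\textbf{A second missing ingredient.} Even granting the interior estimate, your ansatz $u=e^{-(\varphi+i\psi)/h}(a+r_h)$ with only $r_h$ vanishing on the boundary piece cannot satisfy $u|_E=0$, since the leading amplitude $a$ (your transport solution, the paper's $v_s$) does not vanish on $E$. The paper's CGO carries a ``reflection'' term $-e^{l/h}b$, where $l$ solves an eikonal-type equation with $l|_E=-\rho$, $\partial_\nu l|_E=\partial_\nu\rho|_E$, chosen so that $e^{\rho/h}v_s-e^{l/h}b=0$ on $E$ and so that $\Re\,l=-\Re\,\rho-k$ with $k\gtrsim d(x,E)$ makes the extra term negligible away from $E$. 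Without this correction, your $u_1$ cannot lie in the admissible Cauchy data class and the integral identity is not usable. (There is also a smaller slip: if one attempts to kill the residual boundary integral in the integral identity by a support condition on the adjoint solution, it should vanish on $\partial M\setminus\Gamma_N$, not on $\Gamma_N$; the paper in fact controls this term via the boundary Carleman estimate applied to $u_1-u_2$ together with $\B{1}=\B{2}$ on $\partial M$, rather than by a support condition alone.)
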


Inverse problems of the kind considered in this paper has attracted considerable attention in recent years. Calder\'{o}n initiated the study of such inverse problems and  in his original work \cite{Calderon_Paper} investigated the question of unique recovery of conductivity $\gamma$ of a medium $\Omega$ from steady state voltage and current measurements made on the boundary.  In mathematical terms, the question posed by Calder\'on involves the unique recovery of the positive coefficient $\gamma\in L^{\infty}$ in the boundary value problem  $\nabla\cdot \gamma\nabla u=0, u|_{\partial \Omega}=f$  from the boundary data, $f\to \gamma \frac{\partial u}{\partial \nu}|_{\partial \Omega}$, where $\nu$ is the unit outer normal to $\partial \Omega$. Calder\'on was able to establish  the uniqueness result for conductivities close to a constant. The global uniqueness result for $C^{2}$ conductivities was proved by Sylvester and Uhlmann in their fundamental work  \cite{SYL}, where they recast the inverse problem for the conductivity equation to an inverse problem involving the Schr\"{o}dinger equation $(-\Delta + q)u=0$ and introduced the important notion of complex geometric optics solutions for this equation.

In \cite{SUN}, Sun considered the magnetic Schr\"{o}dinger equation in a Euclidean set up and showed that from  the Dirichlet to Neumann map on the full boundary, one can uniquely determine $dB$ (where $d$ denotes the exterior derivative \eqref{ext_der}) and $q$ on a bounded subset $\Omega$ assuming $B$ is small. Here we should note that one cannot recover $B$ completely from the Dirichlet to Neumann map on the boundary; see Lemma \ref{Gauge_inv}. Later Nakamura, Sun and Uhlmann in \cite{S_N_U} removed the smallness assumption and proved that one can uniquely determine $dB$ and $q$ from the boundary data where $B \in C^{\infty}(\Omega)$ and $q\in L^{\infty}(\Omega)$. Additionally there are several works that have improved the regularity condition on the coefficients; see \cite{TOL,SAL_2006,KIM2,KNU,ISA,SAL}

In the case of domains with dimension 2 significant amount of work has already been done. Some of the major works in this direction are \cite{NA,SYL,Astala,BUK,Iman1,Iman2,Iman3}.

In the direction of results concerning Calder\'on type inverse problems with partial boundary data, in dimensions $\geq 3$ Bukhgeim and Uhlmann in \cite{B_U} showed the uniqueness result for the Schr\"{o}dinger equation assuming that the Neumann data is measured on slightly more than half of the boundary. This result was substantially improved by Kenig, Sj\"{o}strand and Uhlmann in \cite{KEN} who showed that unique recovery of the potential function $q$ is possible from boundary measurements on possibly small subsets of the boundary. The analogous result in the setting of magnetic Schrödinger equation was done by Dos Santos Ferreira, Kenig, Sj\"{o}strand and Uhlmann in \cite{DOS}, where they  showed that unique recovery of $dB$ and $q$ is possible from Neumann measurements measured on possibly small subsets of the boundary and with no restriction imposed on the support of the Dirichlet data.

Another natural extension of the Calder\'on inverse problem is to consider the same problem in the setting of a compact  Riemannian manifold with boundary. On admissible Riemannian manifolds, Dos Santos Ferreira, Kenig, Salo and Uhlmann in \cite{F_K_S_U} showed that from full boundary Dirichlet to Neumann data, one can recover $dB$ and $q$ uniquely.
Recently Kenig and Salo \cite{K_S} again in the setting of admissible Riemannian manifolds showed that for the case when $B\equiv 0$ one can restrict both the Dirichlet and the Neumann data on certain subsets of boundary and still recover $q$ uniquely. Furthermore, they also showed that one can ignore a part of boundary while considering the boundary data and can also relax the assumption on the Riemannian metric in the sense that it only needs to be conformally flat in one direction. Very recently in \cite{Krupchyk2017} Krupchyk and Uhlmann showed that on an admissible manifold, one can relax the regularity assumptions on the coefficients of an magnetic Schr\"{o}dinger operator and can still recover the lower order perturbations from the boundary data.

Returning to the magnetic Schr\"{o}dinger equation in the Euclidean setting,  Chung very recently in \cite{chung_2014} proved that one can uniquely recover both $dB$ and $q$ from partial Dirichlet and partial Neumann boundary data.
See also  \cite{chung_2,chung_3} for related results.

Our work extends the results in \cite{chung_2014} and \cite{K_S} since we consider the Magnetic Schr\"{o}dinger inverse problem on an admissible Riemannian manifold and we are interested in the recovery of both $dB$ and $q$ from partial Dirichlet and partial Neumann data. To the best of our knowledge, such a problem has not been considered in previous studies. Due to the method of proof, the boundary sets in our work, are strictly dependent on the direction in which the domain is conformally flat  and hence we can not take arbitrarily small sets for the boundary measurements.

The paper is organized as follows. In Section \ref{sec_car_est} we prove a suitable boundary Carleman estimate. Then in Section \ref{int_Car_est}, following the ideas of \cite{chung_2014}, we will use the $H^1$ interior Carleman estimate in \cite{DOS} to derive an $H^{-1}$ estimate for functions vanishing only on a part of boundary. Using the estimates we construct suitable complex geometric optics type solutions in Section \ref{CGO}.
Next in Section \ref{int_id} we derive an integral identity involving the magnetic and potential terms using the boundary Carleman estimate and construct suitable complex geometric optics type solutions for Equation \ref{mag_schrd_eqn} that are $0$ on a prescribed part of the boundary. Here we will use the interior Carleman estimate to prove the existence of the solution in our desired form. The construction closely follows the construction given in \cite{KEN,F_K_S_U, K_S}. Finally in Section \ref{agrt} we will obtain integral equations involving  $B$ and $q$ and recover $dB$ and $q$ based on unique recovery results involving the attenuated geodesic ray transform \cite{F_K_S_U,K_S,VS}.

\section{Boundary Carleman estimate}\label{sec_car_est}
In this section we prove a Carleman estimate with boundary terms, as in \cite{K_S}, for the conjugated operator $e^{\phi/h}(-\Delta_g)e^{-\phi/h}$ in $M$, where $\phi = \pm x_1$ and $h>0$ small.
In \cite{F_K_S_U} it is shown that on an admissible manifold,  one can consider $\phi(x) = \pm x_1$ to be a limiting Carleman weight for semiclassical Laplacian on $M$.
We refer to \cite{F_K_S_U, HOR} for the definition and properties of limiting Carleman weights on manifolds.
Following  \cite{KEN} and \cite{K_S} we consider a slightly modified weight
\begin{equation*}
\phi_{\e} = \phi + \frac{h\phi^2}{2\e}
\end{equation*}
where $0<h\leq h_0$.
First we prove a small lemma which will allow us to ignore the conformal factor $c$ in further calculations of the Carleman estimate.
\begin{lemma}
Let $g(x) = c(x)\tilde{g}(x)$ where $c(x) \in C^2(M)$ is a positive function and
\begin{equation*}
\mathcal{L}_{B,q} = -\Delta_g - 2i \langle B, d_{g} \rangle_g + q_1,
\end{equation*}
where $q_1 = q - i[\lvert g \rvert^{-1/2}\partial_{x_k}(\lvert g \rvert^{1/2}g^{jk}B_j)] + \lvert B \rvert^2_g$,
then
\begin{equation*}
c(x)\mathcal{L}_{B,q} = -\Delta_{\tilde{g}} - 2i \langle \tilde{B}, d_{\tilde{g}} \rangle_{\tilde{g}} + \tilde{q},
\end{equation*}
where
\begin{gather*}
\tilde{B} = B + ic^{-1}\left(\frac{n}{4} - \frac{1}{2}\right)d_{\tilde{g}}c\\
\tilde{q} = c(x)q_1 = c(x)\left[ q-\frac{ic^{-1}}{\sqrt{\lvert\tilde{g}\rvert}} \frac{\partial}{\partial x_k}\left( \sqrt{\lvert\tilde{g}\rvert} \tilde{g}^{jk} B_j \right) -i\frac{n-2}{2c^2}\langle B, dc\rangle_{\tilde{g}} + c^{-1}\lvert B \rvert^2_{\tilde{g}}\right].
\end{gather*}
\end{lemma}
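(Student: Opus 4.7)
The proof is a direct coordinate computation exploiting the conformal transformation rules $\sqrt{|g|}=c^{n/2}\sqrt{|\tilde g|}$ and $g^{jk}=c^{-1}\tilde g^{jk}$; in particular $\langle\,\cdot\,,\,\cdot\,\rangle_g=c^{-1}\langle\,\cdot\,,\,\cdot\,\rangle_{\tilde g}$ on $1$-forms and $|B|^2_g=c^{-1}|B|^2_{\tilde g}$. The strategy is to expand each summand of $c\,\mathcal{L}_{B,q}u$ in terms of $\tilde g$, collect every first-order contribution into a single term of the shape $-2i\langle\tilde B,du\rangle_{\tilde g}$, and then read off $\tilde q$ as whatever multiplication operator remains.

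First I would expand $-c\,\Delta_g u$. Substituting $\sqrt{|g|}\,g^{jk}=c^{(n-2)/2}\sqrt{|\tilde g|}\,\tilde g^{jk}$ into the local expression $-\Delta_g u=-|g|^{-1/2}\partial_{x_k}(|g|^{1/2}g^{jk}\partial_{x_j}u)$ and applying the product rule to the scalar factor $c^{(n-2)/2}$ yields
\[
-c\,\Delta_g u \;=\; -\Delta_{\tilde g}u \;-\; \tfrac{n-2}{2}\,c^{-1}\langle dc,du\rangle_{\tilde g}.
\]
The cross term rescales cleanly as $-2ic\langle B,du\rangle_g=-2i\langle B,du\rangle_{\tilde g}$. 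Combining these two first-order contributions, I would absorb the residual $\langle dc,du\rangle_{\tilde g}$-piece into a gauge-shifted potential of the form $\tilde B=B+i\alpha c^{-1}dc$, with $\alpha$ determined by the matching identity $-2i\langle i\alpha c^{-1}dc,du\rangle_{\tilde g}=-\tfrac{n-2}{2}\,c^{-1}\langle dc,du\rangle_{\tilde g}$; solving for $\alpha$ reproduces the prefactor $\tfrac{n}{4}-\tfrac{1}{2}$ claimed in the statement.

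To identify $\tilde q$, I would re-expand $-2i\langle\tilde B,du\rangle_{\tilde g}$ in divergence form exactly as in the passage from \eqref{Magnetic Schrodinger local coordinates} to \eqref{step 1} but now for $(\tilde g,\tilde B)$, and subtract the result from the zeroth-order part of $c\,\mathcal{L}_{B,q}u$, namely
\[
c\,q_1\,u \;=\; c\,q\,u \;-\; i\,c\,|g|^{-1/2}\partial_{x_k}(|g|^{1/2}g^{jk}B_j)\,u \;+\; c\,|B|^2_g\,u,
\]
after rewriting each quantity in $\tilde g$ via $|g|^{1/2}g^{jk}=c^{(n-2)/2}|\tilde g|^{1/2}\tilde g^{jk}$ and $|B|^2_g=c^{-1}|B|^2_{\tilde g}$. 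The resulting expression collapses to the four summands claimed for $\tilde q$; in particular, the piece $-i\tfrac{n-2}{2c^2}\langle B,dc\rangle_{\tilde g}$ emerges when the product rule acts on the scalar factor $c^{(n-2)/2}$ inside the original divergence of $B$.

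The main obstacle is the bookkeeping in this last step: the gauge shift $B\mapsto\tilde B$ introduces its own divergence and modulus-squared contributions when $\tilde B$ is repackaged inside a $d^*_{\overline{\tilde B}}d_{\tilde B}$-type divergence in $\tilde g$, and these must not be double-counted against those already produced by $B$ in $g$. The cleanest way to avoid this is to perform the identification symbolically, verifying that with $\alpha$ fixed as in the previous paragraph every $du$-term cancels exactly, and then collecting what remains as the multiplication-operator residue $\tilde q$.
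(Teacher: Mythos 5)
Your proposal follows essentially the same route as the paper's own proof: both apply the conformal transformation law for $\Delta_g$, collect the resulting first-order $\langle dc,du\rangle_{\tilde g}$-term together with $-2i\langle B,du\rangle_{\tilde g}$ into a single term $-2i\langle\tilde B,du\rangle_{\tilde g}$, and then read off $\tilde q$ as the remaining multiplication operator, which is immediately $cq_1$ by matching zeroth-order coefficients. The paper also does not carry out the matching for $\tilde B$ explicitly; it simply writes down the formula and asserts the conclusion.

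However, there is a sign inconsistency in your argument. Your matching identity is set up correctly: writing $\tilde B = B + i\alpha c^{-1}dc$, one needs
\[
-2i\langle i\alpha c^{-1}dc,\,du\rangle_{\tilde g} \;=\; -\tfrac{n-2}{2}\,c^{-1}\langle dc,du\rangle_{\tilde g}.
\]
But the left side equals $+2\alpha\,c^{-1}\langle dc,du\rangle_{\tilde g}$, so this forces $\alpha = -\tfrac{n-2}{4} = \tfrac{1}{2}-\tfrac{n}{4}$, which is the \emph{negative} of the prefactor $\tfrac{n}{4}-\tfrac{1}{2}$ you (and the statement) claim. In other words, the equation you wrote is the right one, but you asserted the wrong solution; with $\tilde B = B + ic^{-1}(\tfrac{n}{4}-\tfrac{1}{2})dc$ the residual $dc\cdot du$ term is produced with the wrong sign and the identity does not close. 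The paper's own formula carries the same latent sign issue, which its proof never catches because it skips the verification. (As a minor further point, your last paragraph about avoiding double-counting when repackaging into a $d^*_{\overline{\tilde B}}d_{\tilde B}$-type divergence is addressing the subsequent remark about $q_2$ rather than the lemma itself: for the lemma $\tilde q$ is simply the zeroth-order coefficient $cq_1$, and no such repackaging is needed.)
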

\begin{proof}

Observe that
\begin{equation*}
\begin{aligned}
\Delta_g u = \frac{1}{\sqrt{\lvert g\rvert}}\frac{\partial}{\partial x^i}\left(\sqrt{\lvert g\rvert} g^{ij} \frac{\partial u}{\partial x^j} \right)
= c^{-1}\left[\Delta_{\tilde{g}} u
+ c^{-1}\left(\frac{n}{2} - 1\right)\langle d_{\tilde{g}}c, d_{\tilde{g}}u \rangle_{\tilde{g}} \right].
\end{aligned}
\end{equation*}

Hence,
\begin{equation*}
\begin{aligned}
\mathcal{L}_{B,q}u &= -\Delta_g u - 2i \langle B, d_{\tilde{g}}u \rangle_g + q_1u \\
&= -c^{-1}(x)\Delta_{\tilde{g}} u - c^{-2}(x)\left(\frac{n}{2} - 1\right)\langle d_{\tilde{g}} c, d_{\tilde{g}} u \rangle_{\tilde{g}}
- 2ic^{-1} \langle B, d_{\tilde{g}}u \rangle_{\tilde{g}} + q_1u
\end{aligned}
\end{equation*}

Now consider
\begin{equation*}
\begin{aligned}
\tilde{B} &= B + ic^{-1}\left(\frac{n}{4} - \frac{1}{2}\right)d_{\tilde{g}}c\\
\tilde{q} &= c(x)q_1(x) \\
&= c\left[ q-ic^{-1}\frac{1}{\sqrt{\lvert\tilde{g}\rvert}} \frac{\partial}{\partial x_k}\left( \sqrt{\lvert\tilde{g}\rvert} \tilde{g}^{jk} B_j \right) -ic^{-2}\left(\frac{n}{2} - 1\right)\langle B, d_{\tilde{g}} c\rangle_{\tilde{g}} + c^{-1}\lvert B \rvert^2_{\tilde{g}}\right].
\end{aligned}
\end{equation*}

Then we will get
\begin{equation*}
c(x)\mathcal{L}_{B,q} = -\Delta_{\tilde{g}} - 2i \langle \tilde{B}, d_{\tilde{g}} \rangle_{\tilde{g}} + \tilde{q}.
\end{equation*}
\end{proof}

A simple calculation shows that $\mathcal{L}_{\tilde{B},q_2} = -\Delta_{\tilde{g}} - 2i \langle \tilde{B}, d_{\tilde{g}} \rangle_{\tilde{g}} + \tilde{q}$, where
\begin{equation*}
q_2 = cq + c^{-1}\left(\frac{n-2}{4}\right)\Delta_{\tilde{g}}c - c^{-2}\left(\frac{n-2}{4}\right)^2\lvert d_{\tilde{g}}c\rvert^2_{\tilde{g}},
\end{equation*}
which implies $c(x)\mathcal{L}_{B,q} = \mathcal{L}_{\tilde{B},q_2}$.

We will see that the Carleman estimate depends on the principal part (highest order term) of the operator $\mathcal{L}$ and therefore by the above lemma we can take $c\equiv 1$ in the calculations of the Carleman estimate. That is taking different $c$ will change the lower order terms which we will later prove that can be absorbed in to the Carleman estimate.

We define the semiclassical Fourier transform on $\mathbb{R}^{n}$ as follows
\begin{equation*}
\hat{u}(\xi) = \frac{1}{(2\pi h)^{n/2}} \int_{\mathbb{R}^n} e^{-i\frac{x}{h} \cdot \xi} u(x) dx.
\end{equation*}

The semiclassical Sobolev spaces $H^s_{scl}(\mathbb{R}^n)$ are defined as
\begin{equation*}
H^s_{scl}(\mathbb{R}^n) = \{ u\in L^2(\mathbb{R}^n)\ :\ (1+\lvert\xi\rvert^{2})^{s/2}\hat{u}(\xi) \in L^2(\mathbb{R}^n) \}.
\end{equation*}

For bounded domain $\Omega$ we can write
\begin{equation*}
\lVert u \rVert_{H^1_{scl}(\Omega)} = \lVert u \rVert_{L^2(\Omega)} + \lVert h\nabla u \rVert_{L^2(\Omega)}.
\end{equation*}
From here onward, unless otherwise specified, we will use the Fourier transform and the Sobolev spaces in semiclassical sense only.
Let us denote
\begin{equation*}
\begin{aligned}
\mathcal{L}_{\phi_{\e}} = -e^{\phi_{\e}/h}h^2\Delta_g e^{-\phi_{\e}/h} \qquad \mbox{and} \quad
\mathcal{L}_{\phi_{\e},B,q} = e^{\phi_{\e}/h}h^2\mathcal{L}_{B,q} e^{-\phi_{\e}/h}.
\end{aligned}
\end{equation*}

Now we will prove a boundary Carleman estimate for $\Delta_g$ on $M$. The main idea of the proof follows \cite[Proposition 4.1]{K_S} but the weight we consider is slightly different from the weight in \cite{K_S}. Hence, for sake of completeness we present the proof of it.

\begin{proposition} \label{initial carleman estimate_boundary}
Let $(M,g)$ be as above, let $\phi = \pm x_1$. Then for some positive constants $h_0$, $C$, $\e$ with $h_0 < \frac{\e}{2}<1$ then for $h<h_0$ and for all $u\in H^1(M)$ with $u|_{\partial M} = 0$ one has,
\begin{equation} \label{c_E_0}
C\lVert \mathcal{L}_{\phi_{\e}} u \rVert^2_{L^2(M)}
	\geq 	\frac{h^2}{\e} \left(\lVert u \rVert^2_{L^2(M)}
	+ 		 \lVert h\nabla u \rVert^2_{L^2(M)}\right)
	- 		2h^3 \langle (\partial_\nu(\phi_\e)\partial_\nu u), \partial_\nu u  \rangle_{L^2(\partial M)}.
\end{equation}
\end{proposition}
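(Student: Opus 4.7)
The plan is to apply the classical $L^2$ Carleman identity to the conjugated operator $\mathcal{L}_{\phi_{\e}}$, splitting it into its formally self-adjoint and skew-adjoint parts and carefully tracking the boundary contributions that arise from integration by parts when $u|_{\partial M}=0$ but $\partial_\nu u|_{\partial M}$ does not vanish. By the conformal-reduction lemma just proved, I may assume the metric has the product form $\tilde{g}=dx_1^2\oplus g_0(x')$, since the conformal factor produces only lower-order perturbations of the principal part and these can be absorbed into the constant $C$ at the end. For this $\tilde{g}$ and $\phi=\pm x_1$ one has $|\nabla\phi|_{\tilde{g}}\equiv 1$, $\nabla^2_{\tilde{g}}\phi\equiv 0$, and the convexified weight satisfies $\nabla\phi_{\e}=(1+h\phi/\e)\nabla\phi$, $|\nabla\phi_{\e}|^2_{\tilde{g}}=(1+h\phi/\e)^2$, and $\Delta_{\tilde{g}}\phi_{\e}=h/\e$.

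Expanding the conjugation and collecting zeroth-order terms, I write $\mathcal{L}_{\phi_{\e}}=A+B$ where $A=-h^2\Delta_{\tilde{g}}-|\nabla\phi_{\e}|^2_{\tilde{g}}$ is formally self-adjoint and $B=2h\langle\nabla\phi_{\e},\nabla\cdot\rangle_{\tilde{g}}+h\Delta_{\tilde{g}}\phi_{\e}$ is formally skew-adjoint. Taking the $L^2(M)$-norm squared and integrating by parts in the cross term $2\operatorname{Re}\langle Au,Bu\rangle$ yields the identity
\begin{equation*}
\|\mathcal{L}_{\phi_{\e}}u\|^2_{L^2(M)}=\|Au\|^2+\|Bu\|^2+\langle[A,B]u,u\rangle-2h^3\int_{\partial M}(\partial_\nu\phi_{\e})|\partial_\nu u|^2\,dS.
\end{equation*}
The boundary term comes only from the Laplacian in $A$: since $u|_{\partial M}=0$ forces $\nabla u|_{\partial M}=(\partial_\nu u)\nu$, one has $Bu|_{\partial M}=2h(\partial_\nu\phi_{\e})(\partial_\nu u)$, while every boundary contribution produced by the first-order operator $B$ carries a factor of $u|_{\partial M}=0$ and therefore vanishes.

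The key positivity comes from the convexification of the weight. For the limiting Carleman weight $\phi=\pm x_1$ the leading commutator $[-h^2\Delta_{\tilde{g}},2h\langle\nabla\phi,\nabla\cdot\rangle_{\tilde{g}}]$ vanishes, and the nontrivial contribution arises from the quadratic correction $h\phi^2/(2\e)$; a direct computation gives
\begin{equation*}
\langle[A,B]u,u\rangle\ge\frac{C h^2}{\e}\bigl(\|u\|^2_{L^2(M)}+\|h\partial_{x_1}u\|^2_{L^2(M)}\bigr)-O(h^3/\e^2)\|u\|^2_{L^2(M)}.
\end{equation*}
To upgrade $\|h\partial_{x_1}u\|$ to the full semiclassical gradient $\|h\nabla u\|$, I use $-h^2\Delta_{\tilde{g}}u=Au+|\nabla\phi_{\e}|^2_{\tilde{g}}u$ together with $\|h\nabla u\|^2_{L^2(M)}=\langle -h^2\Delta_{\tilde{g}}u,u\rangle$ (no boundary term appears since $u|_{\partial M}=0$), which by Young's inequality gives $\|h\nabla u\|^2\le\delta\|Au\|^2+C\delta^{-1}\|u\|^2$; since $\|Au\|^2$ sits positively in the identity above, absorbing a small fraction of it yields the desired $(h^2/\e)\|h\nabla u\|^2$ on the right. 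The main obstacle will be the careful bookkeeping of boundary integrals to confirm that no contribution beyond $2h^3\partial_\nu\phi_{\e}|\partial_\nu u|^2$ survives, together with the quantitative balance between the $C h^2/\e$ commutator lower bound and the $O(h^3/\e^2)$ errors from second derivatives of $\phi_{\e}$, which is precisely what forces the hypothesis $h_0<\e/2$.
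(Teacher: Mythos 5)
Your decomposition of $\mathcal{L}_{\phi_\e}$ into a self-adjoint part $A$ plus a skew-adjoint part $B$, the identification of the boundary contribution coming only from $-h^2\Delta$ together with the observation $Bu|_{\partial M}=2h(\partial_\nu\phi_\e)\partial_\nu u$, the positivity of the commutator driven by the convexification $\phi\mapsto\phi_\e$, and the Young-inequality absorption of a fraction of $\lVert Au\rVert^2$ to recover $\lVert h\nabla u\rVert^2$ all match the paper's proof, which writes $\mathcal{L}_{\phi_\e}=X+iY$ with $X,Y$ self-adjoint (so $B=iY$) and invokes the Kenig--Salo commutator formula $i[X,Y]=\frac{4h^2}{\e}(1+\frac{h}{\e}\phi)^2+hY\beta Y+h^2R$. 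One minor imprecision: after the conformal reduction the lower-order remainder is of size $h^2\lVert u\rVert^2_{H^1_{scl}}$ (it involves a gradient term, which the paper absorbs into both $\frac{h^2}{\e}\lVert u\rVert^2$ and $\frac{h^2}{\e}\lVert h\nabla u\rVert^2$), not $O(h^3/\e^2)\lVert u\rVert^2$, and the hypothesis $h_0<\e/2$ is used to keep $1+\frac{h}{\e}\phi$ bounded away from zero, not to balance that remainder.
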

\begin{proof}
We have $\mathcal{L}_{\phi_{\e}} = X+iY$ where
\begin{equation*}
X = -h^2\Delta_g - \lvert \nabla\phi_\e \rvert^2_{g}, \qquad Y = -2i\langle \nabla\phi_\e, h\nabla . \rangle_g - ih\Delta_g\phi_\e.
\end{equation*}

Observe that $X$ and $Y$ are self adjoint operators.
For $u \in C^{\infty} (M)$ with $u|_{\partial M} = 0$ we have
\begin{equation*}
\begin{aligned}
\lVert \mathcal{L}_{\phi_{\e}} u\rVert^2_{L^2(M)}
=& \langle (X+iY)u , (X+iY)u \rangle_{L^2(M)}\\
=& \lVert Xu \rVert^2_{L^2(M)} + \lVert Yu \rVert^2_{L^2(M)} -i\langle Xu, Yu \rangle_{L^2(M)} + i\langle Yu, Xu \rangle_{L^2(M)}\\
=& \lVert Xu \rVert^2_{L^2(M)} + \lVert Yu \rVert^2_{L^2(M)} -i\langle YXu,u\rangle_{L^2(M)} + i\langle Yu, Xu \rangle_{L^2(M)}\\
=& \lVert Xu \rVert^2_{L^2(M)} + \lVert Yu \rVert^2_{L^2(M)} -i\langle YXu,u\rangle_{L^2(M)} + i\langle XYu,u\rangle_{L^2(M)} \\
&- ih^2\langle Yu, \partial_\nu u\rangle_{L^2(\partial M)}\\
=& \lVert Xu \rVert^2_{L^2(M)} + \lVert Yu \rVert^2_{L^2(M)} -i\langle YXu,u\rangle_{L^2(M)} + i\langle XYu,u\rangle_{L^2(M)} \\
&- 2h^3\langle (\partial_\nu \phi_\e)\partial_\nu u, \partial_\nu u\rangle_{L^2(\partial M)}
\end{aligned}
\end{equation*}
From the calculation in theorem 4.1 of \cite{K_S} we get
\begin{equation*}
i[X,Y] = \frac{4h^2}{\e}\left(1+ \frac{h}{\e}\phi \right)^2 + hY\beta Y + h^2R,
\end{equation*}
where $\beta = \left[\frac{h}{\e}\left(1+\frac{h}{\e}\phi\right)^{-2}\right] $, $R$ is a first order semiclassical differential operator having coefficients uniformly bounded in $h$ and $\e$, for $h << \e$.
Hence,
\begin{equation*}
i\langle [X,Y] u , u\rangle_{L^2(M)} = \frac{h^2}{\e}\left\lVert \left(1+\frac{h}{\e}\phi\right)u \right\rVert^2_{L^2(M)} + h\langle Y\beta Yu , u \rangle_{L^2(M)} + h^2\langle Ru,u \rangle_{L^2(M)}
\end{equation*}
One can make $h_0$ small enough so that $h\lvert \frac{\phi}{\e}\rvert \leq \frac{1}{2}$ in $M$ for all $h<h_0$. Hence by integration by parts, we get,
\begin{equation*}
h\left\lvert \left\langle Y\left[\frac{h}{\e}\left(1+\frac{h}{\e}\phi\right)^{-2}\right] Yu , u \right\rangle \right\rvert_g
\leq C\frac{h^2}{\e}\lVert Yu \rVert^2_{L^2(M)}.
\end{equation*}
And similarly $h^2\lvert\langle Ru,u \rangle_{L^2(M)}\rvert \leq Ch^2\lVert u\rVert_{H^1(M)}$.

Hence we have
\begin{equation*}
\left\lvert i\langle [X,Y] u , u\rangle_{L^2(M)} \right\rvert_g
\geq \frac{h^2}{\e}\lVert u \rVert^2_{L^2(M)} - C\frac{h^2}{\e}\lVert Y u \rVert^2_{L^2(M)} - Ch^2\lVert u \rVert^2_{H^1(M)}
\end{equation*}
Now as $u|_{\partial M} = 0$, using integration by parts and Young's inequality we get
\begin{equation*}
\begin{aligned}
h^2\lVert h\nabla_g u \rVert^2_{L^2(M)} 	
	&= h^2\langle -h^2\Delta_g u,u \rangle_{L^2(M)} \\
	&= h^2\langle Xu, u\rangle_{L^2(M)} + h^2\langle\lvert\nabla {\phi_{\e}} \rvert^2_g u, u\rangle_{L^2(M)}\\
	&\leq \frac{1}{2K}\lVert Xu \rVert^2_{L^2(M)} + \frac{Kh^4}{2}\lVert u \rVert^2_{L^2(M)} + C_3 h^2\lVert u \rVert^2_{L^2(M)},
\end{aligned}
\end{equation*}
where $K$ is a positive constant whose value will be specified later. Putting all the estimates together we get,
\begin{equation*}
\begin{aligned}
\lVert \mathcal{L}_{\phi_{\e}} u \rVert^2_{L^2(M)} \geq &\left[2Kh^2 \lVert h\nabla u \rVert^2_{L^2(M)} - K^2 h^4\lVert u \rVert^2_{L^2(M)} - 2KC_3h^2\lVert u \rVert^2_{L^2(M)}\right] \\
& + \lVert Yu\rVert^2_{L^2(M)} + \left[\frac{h^2}{\e}\lVert u \rVert^2_{L^2(M)} - C_1\frac{h^2}{\e}\lVert Yu \rVert^2_{L^2(M)} - C_2h^2 \lVert u \rVert^2_{H^1} \right]\\
& - 2h^3 \langle (\partial_{\nu} \phi_{\e}) \partial_\nu u, \partial_\nu u \rangle_{L^2(\partial M)}.
\end{aligned}
\end{equation*}

Now let us choose $h_0$ small enough so that $C_1\frac{h_0^2}{\e} \leq \frac{3}{4}$ and $K = \frac{1}{\alpha \e}$, where $\alpha$ is to be determined. Then for $h\leq h_0$ the above estimate takes the form
\begin{equation*}
\begin{aligned}
\lVert &\mathcal{L}_{\phi_{\e}} u \rVert^2_{L^2(M)} \\
\geq & \frac{h^2}{\e}\left( \lVert u \rVert^2_{L^2(M)} + \frac{2}{\alpha} \lVert h\nabla u\rVert^2_{L^2(M)} \right) - C_2h^2\left( \lVert u \rVert^2_{L^2(M)} + \lVert h\nabla u\rVert^2_{L^2(M)} \right)\\
 & - \frac{h^2}{\e} \left(\frac{h^2}{\e \alpha^2}- 2\frac{C_3}{\alpha}\right)\lVert u \rVert^2_{L^2(M)} + \frac{1}{4}\lVert Yu \rVert^2_{L^2(M)}
- 2h^3 \langle (\partial_{\nu} \phi_{\e}) \partial_\nu u, \partial_\nu u \rangle_{L^2(\partial M)}.
\end{aligned}
\end{equation*}

Choose $\alpha = 4C_3$, then the above equation becomes
\begin{equation*}
\begin{aligned}
\lVert \mathcal{L}_{\phi_{\e}} u \rVert^2_{L^2(M)} \geq  &\frac{h^2}{\e}\left( 1-\e C_2 - \frac{h^2}{\alpha^2 \e} \right)\lVert u \rVert^2_{L^2(M)}
+ \frac{h^2}{\e}\left( \frac{2}{\alpha} - \e C_2 \right)\lVert h\nabla u\rVert^2_{L^2(M)} \\
&- 2h^3 \langle (\partial_{\nu} \phi_{\e}) \partial_\nu u, \partial_\nu u \rangle_{L^2(\partial M)}.
\end{aligned}
\end{equation*}
Choosing $\e = min \{ \frac{1}{4C_2}, \frac{1}{\alpha C_2} \}$ and $h_0$ so that it satisfies all the earlier restrictions as well as $\frac{h_0^2}{\alpha^2 \e} \leq \frac{1}{4}$. Hence we have the boundary Carleman estimate for $\mathcal{L}_{\phi_{\e}}$ on $M$ as:
\begin{equation*}
C\lVert \mathcal{L}_{\phi_{\e}} u \rVert^2_{L^2(M)}
\geq  \frac{h^2}{\e}\left(\lVert u \rVert^2_{L^2(M)} + \lVert h\nabla u\rVert^2_{L^2(M)}\right)
- 2h^3 \langle (\partial_{\nu} \phi_{\e}) \partial_\nu u, \partial_\nu u \rangle_{L^2(\partial M)}.
\end{equation*}
\end{proof}

We now prove a proposition which will help us to modify the above estimate to take care of the lower order perturbations.

\begin{proposition}\label{bdy_Carleman_estimate_proposition}
	Let $(M,g)$, $\mathcal{L}_{B,q}$ be as before and $\phi = \pm x_1$. There is a constant $C>0$ such that whenever $0<h$ is small and $u \in C^{\infty}(M)$ with $u|_{\partial M} \equiv 0$, one has
	\begin{equation}\label{bdy_Carleman_estimate}
	\begin{aligned}
	h^2&\lVert e^{\phi/h}\mathcal{L}_{B,q} u \rVert^2_{L^2(M)} + 2h \left\langle \lvert\partial_{\nu} \phi\rvert \partial_\nu (e^{\phi/h}u), \partial_\nu (e^{\phi/h}u) \right\rangle_{\{\partial_{\nu}\phi \geq 0 \}}\\
	&\geq  \lVert u \rVert^2_{L^2(M)} + \lVert h\nabla u\rVert^2_{L^2(M)}
	+ 2h \left\langle \lvert\partial_{\nu} \phi\rvert \partial_\nu (e^{\phi/h}u), \partial_\nu (e^{\phi/h}u) \right\rangle_{\{\partial_{\nu}\phi \leq 0 \}}.
	\end{aligned}
	\end{equation}
\end{proposition}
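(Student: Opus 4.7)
The plan is to derive Proposition \ref{bdy_Carleman_estimate_proposition} from the unperturbed boundary Carleman estimate of Proposition \ref{initial carleman estimate_boundary}, in two stages: first incorporate the magnetic and electric perturbations, then swap the convex weight $\phi_\varepsilon$ for the linear weight $\phi$.

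I begin by setting $f := e^{\phi_\varepsilon/h} u$. Since $u|_{\partial M}=0$ we have $f|_{\partial M}=0$, so Proposition \ref{initial carleman estimate_boundary} applies to $f$. A direct computation, using $\mathcal{L}_{B,q} u = -\Delta_g u - 2i\langle B, du\rangle_g + \tilde q u$, gives
\begin{equation*}
\mathcal{L}_{\phi_\varepsilon} f \;=\; -h^2 e^{\phi_\varepsilon/h}\Delta_g u \;=\; h^2 e^{\phi_\varepsilon/h}\mathcal{L}_{B,q} u \;+\; h^2 e^{\phi_\varepsilon/h}\bigl(2i\langle B, du\rangle_g - \tilde q u\bigr).
\end{equation*}
Rewriting the correction through $e^{\phi_\varepsilon/h} u = f$ and $h\, e^{\phi_\varepsilon/h} du = h\, df - (d\phi_\varepsilon)f$, together with the boundedness of $d\phi_\varepsilon$, $B$ and $\tilde q$, shows its $L^2$ norm is bounded by $Ch(\|f\|_{L^2}+\|h\nabla f\|_{L^2})$. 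Squaring and plugging into Proposition \ref{initial carleman estimate_boundary}, then shrinking $\varepsilon$ so that $1/\varepsilon$ exceeds the resulting constant, yields an estimate of the form
\begin{equation*}
C_1 h^4\|e^{\phi_\varepsilon/h}\mathcal{L}_{B,q} u\|_{L^2(M)}^2 + 2h^3\langle \partial_\nu\phi_\varepsilon \partial_\nu f, \partial_\nu f\rangle_{L^2(\partial M)} \;\geq\; C_2 h^2\bigl(\|f\|^2+\|h\nabla f\|^2\bigr).
\end{equation*}

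Next I pass from $\phi_\varepsilon$ to $\phi$. Since $\phi_\varepsilon - \phi = \tfrac{h\phi^2}{2\varepsilon}$ and $\phi$ is bounded on $M$, the ratio $e^{\phi_\varepsilon/h}/e^{\phi/h} = e^{\phi^2/(2\varepsilon)}$ is uniformly bounded above and below by positive constants depending only on $\varepsilon$ and $M$. Hence $\|e^{\phi_\varepsilon/h}\mathcal{L}_{B,q} u\|^2 \asymp \|e^{\phi/h}\mathcal{L}_{B,q} u\|^2$. On $\partial M$, where $u=0$, $\partial_\nu f = e^{\phi_\varepsilon/h}\partial_\nu u = e^{\phi^2/(2\varepsilon)}\partial_\nu(e^{\phi/h}u)$, so $|\partial_\nu f|^2 \asymp |\partial_\nu(e^{\phi/h}u)|^2$; moreover $\partial_\nu\phi_\varepsilon = (1+h\phi/\varepsilon)\partial_\nu\phi$ shares the sign of $\partial_\nu\phi$ and $|\partial_\nu\phi_\varepsilon| \asymp |\partial_\nu\phi|$ for $h$ small. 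The bulk quantity $\|f\|^2 + \|h\nabla f\|^2$ likewise dominates $\|u\|^2 + \|h\nabla u\|^2$ via the bounded weight $e^{\phi_\varepsilon/h}$.

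Finally I split the signed boundary integral $\int_{\partial M}\partial_\nu\phi_\varepsilon\,|\partial_\nu f|^2\,d\sigma_g$ into its non-negative contribution over $\{\partial_\nu\phi\geq 0\}$, which stays on the LHS as $2h\int_{\{\partial_\nu\phi\geq 0\}}|\partial_\nu\phi|\,|\partial_\nu(e^{\phi/h}u)|^2$, and its non-positive contribution over $\{\partial_\nu\phi\leq 0\}$, which is moved to the RHS as the positive term in \eqref{bdy_Carleman_estimate}. After dividing through by $h^2$ and rolling up constants, the desired estimate follows. The main obstacle will be the absorption in the first step: the $B$-term contributes a perturbation of the same semiclassical order $h^2(\|f\|^2+\|h\nabla f\|^2)$ as the dominant term on the right of the unperturbed estimate, so $\varepsilon$ must be tuned so that $1/\varepsilon$ beats the constant coming from $\|B\|_\infty$ and $\|\tilde q\|_\infty$, while still satisfying the earlier smallness conditions on $h$ and $\varepsilon$ required for Proposition \ref{initial carleman estimate_boundary}.
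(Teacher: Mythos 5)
Your proposal is correct and follows essentially the same route as the paper's proof: conjugate $\mathcal{L}_{B,q}$ by $e^{\phi_\varepsilon/h}$, observe that the result is $\mathcal{L}_{\phi_\varepsilon}$ plus lower-order perturbations of size $O(h)\|f\|_{H^1_{\mathrm{scl}}}$, apply Proposition~\ref{initial carleman estimate_boundary}, absorb the perturbations by choosing $\varepsilon$ small relative to $\|B\|_\infty$, $\|\tilde q\|_\infty$, then trade $\phi_\varepsilon$ for $\phi$ via the bounded conversion factor $e^{\phi^2/(2\varepsilon)}$ and the observation that $\partial_\nu\phi_\varepsilon$ and $\partial_\nu\phi$ have the same sign, and finally split the boundary term by sign. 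The only difference from the paper is notational: you work with $f=e^{\phi_\varepsilon/h}u$ applied to the original $u$, while the paper writes the argument for the conjugated variable and substitutes $v=e^{-\phi/h}u$ at the end.
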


\begin{proof}
	We observe that (from \eqref{step 1})
	\begin{equation*}
	\mathcal{L}_{\phi_{\e}B,q} = \mathcal{L}_{\phi_{\e}} - 2ih^2 e^{\phi_{\e}/h}\langle B, d (e^{-\phi_{\e}/h}\cdot) \rangle_g + h^2\tilde{q}.
	\end{equation*}
	
	Hence,
	\begin{equation*}
	\begin{aligned}
	\left\lVert \mathcal{L}_{\phi_{\e},B,q} u \right\rVert^2_{L^2(M)}
	\geq & \lVert \mathcal{L}_{\phi_{\e}} u \rVert^2_{L^2(M)}\\
	&- \left(\lVert he^{\phi_{\e}/h}\langle B, hd(e^{-\phi_{\e}/h}u) \rangle_g \rVert^2_{L^2(M)} + \lVert h^2\tilde{q} u \rVert^2_{L^2(M)}\right)
	\end{aligned}
	\end{equation*}
	
	Now observe that the term
	\begin{equation*}
	\begin{aligned}
	&\lVert he^{\phi_{\e}/h}\langle B, hd(e^{-\phi_{\e}/h}u) \rangle \rVert_{L^2(M)}\\
	\leq& h\lVert \langle B, hdu \rangle_g \rVert_{L^2(M)} + h^2\lVert \langle B, d\phi_{\e} \rangle_g u \rVert_{L^2(M)}\\
	\leq& Ch \lVert B \rVert_{W^{1,\infty}(M)} \lVert u \rVert_{H^{1}(M)}
	+ Ch \lVert B \rVert_{L^{\infty}(M)} \lVert u \rVert_{L^2(M)}\\
	\leq& Ch \lVert u \rVert_{H^{1}(M)}.
	\end{aligned}
	\end{equation*}
	So, we get
	\begin{equation*}
	\begin{aligned}
	\left\lVert \mathcal{L}_{\phi_{\e},B,q} u \right\rVert^2_{L^2(M)}
	\geq& C\frac{h^2}{\e}\lVert u \rVert^2_{H^1(M)}
	- 2h^3 \langle (\partial_{\nu} \phi_{\e}) \partial_\nu u, \partial_\nu u \rangle_{\partial M}\\
	&- Ch^2 \lVert u \rVert^2_{H^1(M)} - Ch^4\lVert \tilde{q} \rVert^2_{L^{\infty}(M)} \lVert u \rVert^2_{L^2(M)}\\
	\geq& C\frac{h^2}{\e}\lVert u \rVert^2_{H^1(M)}
	- 2h^3 \langle (\partial_{\nu} \phi_{\e}) \partial_\nu u, \partial_\nu u \rangle_{\partial M},
	\end{aligned}
	\end{equation*}
	for $\e>0$ and $h>0$ small enough.
	
	Which implies
	\begin{equation*}
	\begin{aligned}
	\left\lVert \mathcal{L}_{\phi_{\e},B,q} u \right\rVert^2_{L^2(M)} &+ 2h^3 \langle (\partial_{\nu} \phi_{\e}) \partial_\nu u, \partial_\nu u \rangle_{ \{ \partial_{\nu}\phi_{\e} \geq 0 \} }\\
	\geq& C\frac{h^2}{\e}\lVert u \rVert^2_{H^1}
	- 2Ch^3 \langle (\partial_{\nu} \phi_{\e}) \partial_\nu u, \partial_\nu u \rangle_{ \{ \partial_{\nu}\phi_{\e} \leq 0 \} }.
	\end{aligned}
	\end{equation*}
	
	Here we observe that
\begin{equation*}
\begin{aligned}
	\{x \in \partial M : \partial_{\nu}\phi_{\e}(x) \geq 0 \} = \{ x \in \partial M: \partial_{\nu}\phi(x) \geq 0 \},\\
	 \mbox{and}\quad  \{x \in \partial M : \partial_{\nu}\phi_{\e}(x) \geq 0 \} = \{x \in \partial M : \partial_{\nu}\phi(x) \geq 0 \}.
\end{aligned}
\end{equation*}
	
	Moreover $e^{\phi_{\e}/h} = e^{\phi/h} e^{\phi^2/2\e}$ and for fixed $\e>0$ there exists $C>0$ such that
	\begin{equation*}
	\frac{1}{C} \leq \lVert e^{\phi^2/2\e}\rVert_{W^{2,\infty}(M)} \leq C.
	\end{equation*}
	Hence,
	\begin{equation*}
	\begin{aligned}
	\left\lVert h^2e^{\phi/h}\mathcal{L}_{B,q} e^{-\phi/h}u \right\rVert^2_{L^2(M)} &+ 2h^3 \langle (\partial_{\nu} \phi) \partial_\nu u, \partial_\nu u \rangle_{ \{ \partial_{\nu}\phi \geq 0 \} }\\
	\geq& Ch^2\lVert u \rVert^2_{H^1}
	- 2Ch^3 \langle (\partial_{\nu} \phi) \partial_\nu u, \partial_\nu u \rangle_{ \{ \partial_{\nu}\phi \leq 0 \} }.
	\end{aligned}
	\end{equation*}
	
	Now replacing $e^{-\phi/h}u$ by $v$ we see
	\begin{equation*}
	\begin{aligned}
	\left\lVert h^2e^{\phi/h}\mathcal{L}_{B,q} v \right\rVert^2_{L^2(M)} &+ 2h^3 \langle (\partial_{\nu} \phi) \partial_\nu (e^{\phi/h}v), \partial_\nu (e^{\phi/h}v) \rangle_{ \{ \partial_{\nu}\phi \geq 0 \} }\\
	\geq& Ch^2\lVert e^{\phi/h}v \rVert^2_{H^1}
	- 2Ch^3 \langle (\partial_{\nu} \phi) \partial_\nu (e^{\phi/h}v), \partial_\nu (e^{\phi/h}v) \rangle_{ \{ \partial_{\nu}\phi \leq 0 \} }.
	\end{aligned}
	\end{equation*}
\end{proof}

\section{Interior Carleman estimate}\label{int_Car_est}
In this section we prove a Carleman estimate for $u\in C^{\infty}(M)$ with $u|_{\partial M}$ supported in an open subset of $\partial M$. On bounded Euclidean domains, the proof is done in \cite{chung_2014}. We follow very closely the ideas of \cite{chung_2014} in the derivation of the Carleman estimate in this section.

We consider small open sets in $M$ where the Riemannian metric $g$ is nearly Euclidean after a suitable change of coordinates. We first prove the estimate on those open sets and later we patch it up over $M$ using a partition of unity.
In each coordinate patch, we use techniques similar to the case of the Euclidean domain, as in \cite{chung_2014}.
Due to the fact that our metric is non Euclidean, we encounter error terms, and in \eqref{step_3}, \eqref{step_9} we estimate the error terms.
Here we crucially use the fact that the metric $g$ is close to Euclidean on these coordinate patches.

Let us now fix $\phi(x) = x_1$ and recall that the operator $\mathcal{L}_{\phi_{\e}}$ is defined as
\begin{equation*}
\mathcal{L}_{\phi_{\e}} = -e^{\phi_{\e}/h}h^2\Delta_ge^{-\phi_{\e}/h}
\end{equation*}
and recall that $\mathcal{L}_{\phi,B,q} = e^{\phi/h}h^2\mathcal{L}_{B,q}e^{-\phi/h}$ on $\mathbb{R}_{+} \times M_0$.
Let $u \in C^{\infty}_c(M)$, then for small $\e>0$ and small enough $h \in (0,\e)$ the Estimate \eqref{c_E_0} implies
\begin{equation}\label{initial carleman est on omega tilde}
C\lVert \mathcal{L}_{\phi_{\e}} u \rVert_{L^2(M)}
	\geq 	\frac{h}{\sqrt{\e}} \lVert u \rVert_{H^1(M)}.
\end{equation}

Without loss of generality one can assume that $M \subset \mathbb{R}_{+} \times M_0$.
As $\Gamma_D \supset \partial_{+}M$ so there is $\delta>0$ so that
\begin{equation*}
\{x\in \partial M : \partial_{\nu}\phi > -2\delta \} \subset \Gamma_{D}.
\end{equation*}
Consider $E = \{x\in \partial M : \partial_{\nu}\phi \leq -\delta \}$ so that for any function $u$ vanishing on $E$ will imply $u|_{\partial M}$ is supported in $\Gamma_{D}$.
Let us consider a compact domain $\Omega \subset (\mathbb{R}_{+}\times(M_0,g_0))$ so that $M \subset \Omega$ and $E \subset \partial \Omega$.
Our aim is to prove the following estimate holds for all $u \in C^{\infty}_{c}(\Omega)$ and for $0<h<\e$ sufficiently small:
\begin{equation}\label{Int_Car_Est}
\frac{h}{\sqrt{\e}}\lVert u \rVert_{L^2(M)} \leq C\lVert \mathcal{L}_{\phi_{\e}}u \rVert_{H^{-1}(\Omega)}
\end{equation}
and subsequently the following estimate
\begin{equation*}
h\lVert u \rVert_{L^2(M)} \leq C\lVert \mathcal{L}_{\phi,B,q}u \rVert_{H^{-1}(\Omega)}.
\end{equation*}
From now on by $\delta^{ij}$, we denote the Kronecker delta and $\delta$ without superscripts is a small parameter.
We will start with proving the Carleman estimate for the following special case.

\subsection{Estimate for the special case}
Here we assume that
\begin{enumerate}
\item $M_0 \subset \mathbb{R}^{n-1}$ and the metric $g_0$ associated with $M_0$ is such that
\begin{equation}\label{step_10}
\lvert g_0^{jk} - \delta^{jk} \rvert < \delta,
\end{equation}
for some $\delta>0$ small.

\item The set $E \subset \partial M \cap \partial \Omega$ can be can be thought of as contained in the graph of a smooth function $f: \mathbb{R}^{n-1} \to \mathbb{R}_{+}$. That is
\begin{equation*}
E \subset \{ (f(\p{x}),\p{x}) : \p{x} \in M_0 \subset \mathbb{R}^{n-1} \}.
\end{equation*}
\item The function $f$ is so that
\begin{equation*}
\lvert \nabla_{g_0}f - Ke_{n-1} \rvert \leq \mu,
\end{equation*}
where $\lvert\cdot\rvert$ denotes the Euclidean distance, $K$ is some positive real number, $\mu>0$ small and $e_{n-1} \in \mathbb{R}^{n-1}$ defined as $e_{n-1} = (0,\dots,0,1)$.
\end{enumerate}

Here we would like to mention that the first assumption is motivated by the techniques in the proof of the Carleman estimate in \cite{Chung2013}.
Without loss of generality one can make the assumption that the domain $\Omega$ belongs to the space
\begin{equation*}
A_0 = \{ (x_1,\p{x})\in \mathbb{R}\times \mathbb{R}^{n-1} : x_1 \geq f(\p{x}) \}.
\end{equation*}

Let us make a change of variable $\sigma: (x_1,\p{x}) \mapsto (x_1 - f(\p{x}),\p{x})$. Under this change of coordinates we get $\tilde{E} = \sigma(E)$ is a subset of the plane $x_1 = 0$ and the domain $\tilde{\Omega} = \sigma(\Omega)$, sits in $\tilde{A}_0 = \{ (x_1,\p{x}): x_1 \geq 0 \}$. Let us denote $\tilde{w}(x_,\p{x})$ in place of $w(\sigma^{-1}(x_1,\p{x})) = w(x_1+f(\p{x}),\p{x})$ for any function $w$ defined on $\Omega$ from now on.

A calculation shows that the form of the operator $\mathcal{L}_{\phi_{\e}}$ in this new coordinate system is
\begin{equation*}
\begin{aligned}
-\widetilde{\mathcal{L}_{\phi_{\e}}u}({x}_1,\p{{x}})
=& \left[ 1 + \lvert \nabla_{\p{x}}f(\p{x})\rvert_{g_0}^2 \right]h^2 \partial_{x_1}^2 \tilde{u} \\
&- \left[ 2\left( 1+ \frac{hx_1}{\e} \right) + 2\langle \nabla_{\p{x}}f(\p{x}), h\nabla_{\p{x}} \rangle_{g_0} \right]h\partial_{x_1}\tilde{u} \\
&+ \left[\left( 1 + h\frac{x_1}{\e} \right)^2 + h^2 \Delta_{g_0} \right]\tilde{u} + hE_1\tilde{u},
\end{aligned}
\end{equation*}
where $E_1$ is a first order semiclassical differential operator with bounded coefficients.
We define the operator $\tilde{\mathcal{L}}_{\phi_{\e}}$ on $M$ as
\begin{equation*}
\begin{aligned}
\tilde{\mathcal{L}}_{\phi_{\e}}
=& \left[ 1 + \lvert \nabla_{\p{x}}f(\p{x})\rvert_{g_0}^2 \right]h^2 \partial_{x_1}^2  \\
&- \left[ 2\left( 1+ \frac{hx_1}{\e} \right) + 2\langle \nabla_{\p{x}}f(\p{x}), h\nabla_{\p{x}} \rangle_{g_0} \right]h\partial_{x_1} \\
&+ \left[\left( 1 + h\frac{x_1}{\e} \right)^2 + h^2 \mathcal{L} \right],
\end{aligned}
\end{equation*}
where $\mathcal{L} = g_0^{jk}\partial_{x_j x_k}$, here $2 \leq j,k\leq n$.
Then we have $ -\widetilde{\mathcal{L}_{\phi_{\e}}u} = \tilde{\mathcal{L}}_{\phi_{\e}} \tilde{u} + hE_1\tilde{u}$.

Observe that due to our assumption on $g$, we can extend $g$ smoothly onto $\mathbb{R}^{n-1}$ having the property $g^{jk} = \delta^{jk}$ outside an open neighborhood of $\tilde{\Omega}$.
Let $\alpha, \gamma_f$ be a smooth functions on $\mathbb{R}^{n}$ so that $\alpha$ agrees with $\left( 1 + h\frac{x_1}{\e} \right)$ on $\tilde{\Omega}$ and $\gamma_f = \lvert\nabla_{\p{x}} f(\p{x}) \rvert_{g_0}$ on $\tilde{\Omega}$. Let $\beta_f$ be a smooth vector field on $\mathbb{R}^n$ so that it agrees with $\nabla_{g_0} f$ on $\tilde{\Omega}$.

Having this setup, in this subsection, our aim is to prove the following proposition.

\begin{proposition}\label{Target_Proposition}
Let $\tilde{M}$, $\tilde{\Omega}$, $\tilde{\mathcal{L}}_{\phi_{\e}}$ be as above, then for small $\e>0$ and small enough $h \in (0,\e)$ one has
\begin{equation*}
C\frac{h}{\sqrt{\e}}\lVert w \rVert_{L^2(\R)}
\leq \lVert \tilde{\mathcal{L}}_{\phi_{\e}} w \rVert_{H^{-1}(\R)},\quad \forall w\in C^{\infty}_c(\tilde{\Omega}).
\end{equation*}
\end{proposition}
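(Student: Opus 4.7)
The plan is to upgrade the $L^2$-based Carleman estimate \eqref{initial carleman est on omega tilde} to the $H^{-1}$ form required in \eqref{Int_Car_Est} by conjugating the operator with a semiclassical smoothing of order $-1$, following the strategy of \cite{chung_2014}. The estimate \eqref{initial carleman est on omega tilde} transfers from $\mathcal{L}_{\phi_{\e}}$ to the coordinate-changed operator $\tilde{\mathcal{L}}_{\phi_{\e}}$ after absorbing the first-order remainder $hE_1$ coming from the identity $-\widetilde{\mathcal{L}_{\phi_{\e}}u} = \tilde{\mathcal{L}}_{\phi_{\e}}\tilde{u} + hE_1\tilde{u}$, so that one has
$$C\,\frac{h}{\sqrt{\e}}\,\lVert v\rVert_{H^1_{scl}(\R)} \leq \lVert \tilde{\mathcal{L}}_{\phi_{\e}} v\rVert_{L^2(\R)}, \qquad v\in C_c^\infty(\tilde{\Omega}).$$

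I would introduce the global semiclassical Bessel potential $T := (1 - h^2\Delta)^{-1/2}$ on $\mathbb{R}^n$, a Fourier multiplier of order $-1$ realizing isometries $H^{-1}_{scl}(\mathbb{R}^n)\to L^2(\mathbb{R}^n)$ and $L^2(\mathbb{R}^n)\to H^1_{scl}(\mathbb{R}^n)$. For $w\in C_c^\infty(\tilde{\Omega})$ extended by zero to $\mathbb{R}^n$, write
$$\tilde{\mathcal{L}}_{\phi_{\e}}(Tw) = T\bigl(\tilde{\mathcal{L}}_{\phi_{\e}} w\bigr) + [\tilde{\mathcal{L}}_{\phi_{\e}},T]\,w.$$
Since $\tilde{\mathcal{L}}_{\phi_{\e}}$ is a second-order semiclassical differential operator with smooth bounded coefficients (after extending $|\nabla_{\p{x}}f|^2$ and $g_0^{jk}$ smoothly to $\mathbb{R}^n$), symbolic calculus yields
$$\bigl\lVert [\tilde{\mathcal{L}}_{\phi_{\e}},T]\,w\bigr\rVert_{L^2(\mathbb{R}^n)} \leq C\,h\,\lVert w\rVert_{L^2(\mathbb{R}^n)},$$
with constants independent of $\e$ and $h$.

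To apply the $L^2$ Carleman estimate I need a compactly supported function in place of $Tw$. Pick $\chi\in C_c^\infty(\tilde{\Omega})$ equal to $1$ on a neighbourhood of $\mathrm{supp}(w)$ at a fixed positive distance from $\partial\tilde{\Omega}$. Because the kernel of $T$ decays on the semiclassical scale $h$, both $(1-\chi)Tw$ and $[\tilde{\mathcal{L}}_{\phi_{\e}},\chi]Tw$ have $H^1_{scl}$ and $L^2$ norm of size $O(h^\infty)\lVert w\rVert_{L^2}$. Applying the $L^2$ Carleman estimate to $\chi Tw\in C_c^\infty(\tilde{\Omega})$, and using $\lVert\chi Tw\rVert_{H^1_{scl}(\R)} = \lVert Tw\rVert_{H^1_{scl}(\mathbb{R}^n)} + O(h^\infty)\lVert w\rVert_{L^2} = \lVert w\rVert_{L^2(\R)} + O(h^\infty)\lVert w\rVert_{L^2}$, one obtains
$$C\,\frac{h}{\sqrt{\e}}\,\lVert w\rVert_{L^2(\R)} \leq \bigl\lVert \tilde{\mathcal{L}}_{\phi_{\e}}(Tw)\bigr\rVert_{L^2} + O(h^\infty)\lVert w\rVert_{L^2} \leq \lVert \tilde{\mathcal{L}}_{\phi_{\e}} w\rVert_{H^{-1}(\R)} + C\,h\,\lVert w\rVert_{L^2(\R)}.$$
Choosing $\e$ small enough that $Ch \ll h/\sqrt{\e}$ absorbs the $Ch\lVert w\rVert_{L^2}$ term into the left-hand side and gives the conclusion.

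The main obstacle is that $Tw$ is not compactly supported in $\tilde{\Omega}$, so the $L^2$ Carleman estimate does not apply to $Tw$ directly; the resolution is to cut off with $\chi$ and exploit the rapid off-diagonal decay of $T$ on scale $h$ to make the cut-off errors negligible. A second technical point is that the commutator $[\tilde{\mathcal{L}}_{\phi_{\e}},T]$ must be bounded uniformly in $\e$, and this is where the assumptions \eqref{step_10} (that $g_0$ is nearly Euclidean) together with the smallness of $\mu$ in the bound on $\nabla_{\p{x}}f$ enter the picture: they ensure that the coefficients of $\tilde{\mathcal{L}}_{\phi_{\e}}$ and their derivatives admit uniform bounds on $\mathbb{R}^n$ after the smooth extension outside $\tilde{\Omega}$, so that the symbolic calculus for $T$ and $\tilde{\mathcal{L}}_{\phi_{\e}}$ proceeds with constants independent of the small parameters.
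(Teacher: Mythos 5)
Your proposal takes a genuinely different route from the paper, but it contains a gap at the crucial step where the half‑space geometry enters. You introduce the global Bessel potential $T=(1-h^2\Delta)^{-1/2}$, which is a Fourier multiplier on all of $\mathbb{R}^n$. Consequently $\lVert T(\tilde{\mathcal{L}}_{\phi_\e}w)\rVert_{L^2(\mathbb{R}^n)}$ equals $\lVert \tilde{\mathcal{L}}_{\phi_\e}w\rVert_{H^{-1}(\mathbb{R}^n)}$, the \emph{whole‑space} $H^{-1}$ norm. But the proposition asks for a bound in terms of $\lVert \tilde{\mathcal{L}}_{\phi_\e}w\rVert_{H^{-1}(\R)}$, the dual of $H^1_0(\R)$, which is the \emph{smaller} of the two norms for distributions supported near $\{x_1=0\}$ (extending by zero gives $\lVert g\rVert_{H^{-1}(\R)}\leq\lVert g\rVert_{H^{-1}(\mathbb{R}^n)}$, and the reverse inequality only holds with a constant that degenerates as $\mathrm{supp}\,g$ approaches $\{x_1=0\}$). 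In your last chain of inequalities you replace $\lVert T(\tilde{\mathcal{L}}_{\phi_\e}w)\rVert_{L^2}$ by $\lVert \tilde{\mathcal{L}}_{\phi_\e}w\rVert_{H^{-1}(\R)}$, and this substitution goes in the wrong direction. The same issue contaminates the cutoff step: you pick $\chi=1$ ``at a fixed positive distance from $\partial\tilde\Omega$,'' but since $\tilde\Omega$ touches the plane $\{x_1=0\}$ along $\tilde E$, functions $w\in C_c^\infty(\tilde\Omega)$ can be supported within distance $O(h)$ of $\{x_1=0\}$; then both the $O(h^\infty)$ cutoff errors and the norm equivalence fail to be uniform in $w$.

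The paper's proof is built precisely to avoid this. Instead of a two‑sided smoothing $T$, it uses the one‑sided Volterra‑type operators $J_s^{-1},J_l^{-1}$ (integration from $0$ to $x_1$), which respect the half‑space $\R$ and never ``leak'' across $\{x_1=0\}$. The central ingredient you cannot reproduce with a generic $\Psi$DO is Lemma~\ref{lemma for g}: the identity $\lVert J_s u\rVert_{H^{-1}(\R)}\simeq\lVert u-\g\rVert_{L^2(\R)}$, where $\g$ lies in the kernel of $J_s$ on $\R$; this computes the half‑space $H^{-1}$ norm exactly, boundary effect included. To close the argument one then must show $\lVert\g\rVert_{L^2}\leq\frac12\lVert u\rVert_{L^2}$, and this is only possible because $F_s$ (respectively $F_l$) is chosen to be an approximate root of the characteristic polynomial of $\tilde{\mathcal{L}}_{\phi_\e}$ in the $x_1$‑direction, with the small/large frequency splitting making that approximation good. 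The symbol $(1+|\xi|^2)^{-1/2}$ of your $T$ has no such factorization relationship with $\tilde{\mathcal{L}}_{\phi_\e}$, so even a half‑space‑adapted variant of $T$ would not produce a controllable residual. In short, your argument is correct only for $w$ supported at a fixed positive distance from $\{x_1=0\}$, and misses the frequency decomposition and the $\g$‑function analysis that the paper (following Chung) uses to obtain the estimate up to the boundary.
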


\begin{corollary}
Assuming the above notations, for $\e>0$ small and $h \in (0,\e)$ small enough we get
\begin{equation}\label{Target_estimate}
C\frac{h}{\sqrt{\e}}\lVert w \rVert_{L^2(\Omega)}
\leq \lVert \tilde{\mathcal{L}}_{\phi_{\e}} w \rVert_{H^{-1}(A_0)},\quad \forall w\in C^{\infty}_c(\Omega).
\end{equation}
\end{corollary}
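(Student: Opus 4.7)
The Corollary is simply Proposition \ref{Target_Proposition} read back in the original (un-straightened) coordinates; my plan is therefore to transport the estimate across the shear map $\sigma:(x_1,\p{x})\mapsto(x_1-f(\p{x}),\p{x})$, which takes $A_0$ bijectively onto $\R$ and $\Omega$ onto $\tilde\Omega$. Given $w\in C^{\infty}_c(\Omega)$, I would form $v:=w\circ\sigma^{-1}\in C^{\infty}_c(\tilde\Omega)$, invoke Proposition \ref{Target_Proposition} for $v$, and then translate each norm appearing there back to the $\Omega$, $A_0$ side.

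For the left-hand side the bookkeeping is immediate: $\det D\sigma\equiv 1$, so $\lVert v\rVert_{L^2(\R)}=\lVert w\rVert_{L^2(\Omega)}$ on the nose. For the right-hand side I would exploit that $\sigma$ is smooth with all derivatives controlled by $\lVert f\rVert_{C^1}$, whence it induces a bi-Lipschitz isomorphism $\sigma^{*}:H^1_0(\R)\to H^1_0(A_0)$; dualising this gives
\[
\lVert\tilde{\mathcal{L}}_{\phi_{\e}}v\rVert_{H^{-1}(\R)}\leq C\,\lVert\tilde{\mathcal{L}}_{\phi_{\e}}w\rVert_{H^{-1}(A_0)},
\]
once we know that the operator $\tilde{\mathcal{L}}_{\phi_{\e}}$ transforms naturally under $\sigma$. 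But that is precisely the content of the identity $-\widetilde{\mathcal{L}_{\phi_{\e}}u}=\tilde{\mathcal{L}}_{\phi_{\e}}\tilde u+hE_1\tilde u$ derived just above: the principal part of the pulled-back operator is exactly $\tilde{\mathcal{L}}_{\phi_{\e}}$, with the remainder $hE_1$ absorbable into the constant since its coefficients are uniformly bounded in $h$ and $\e$. Combining the two comparisons with Proposition \ref{Target_Proposition} then produces the desired estimate \eqref{Target_estimate}.

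The only step that warrants a line of verification is the $H^{-1}$ comparison, which I would pin down directly from the dual characterisation $\lVert F\rVert_{H^{-1}}=\sup\{\lvert\langle F,\psi\rangle\rvert:\psi\in H^1_0,\ \lVert\psi\rVert_{H^1}\leq 1\}$ by testing $\psi$ through its push/pull under $\sigma^{\pm 1}$ and bounding $\lVert\psi\circ\sigma^{\pm 1}\rVert_{H^1}$ through the chain rule. I do not expect a genuine obstacle here: the Corollary is essentially a change-of-variables repackaging of Proposition \ref{Target_Proposition}, and its only purpose is to phrase the estimate in the coordinates that will be convenient for patching together the local estimates in the next subsection.
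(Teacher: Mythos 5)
Your proposal is correct and follows essentially the same route as the paper: pull back through the shear $\sigma$, apply Proposition \ref{Target_Proposition} to $\tilde{w}=w\circ\sigma^{-1}$, compare $L^2$ and (by duality) $H^{-1}$ norms across $\sigma$, and use the identity $-\widetilde{\mathcal{L}_{\phi_{\e}}u}=\tilde{\mathcal{L}}_{\phi_{\e}}\tilde u+hE_1\tilde u$ to absorb the remainder for small $h$. The only cosmetic difference is that the paper cites Lemma \ref{transformation norm equiv lema} for the norm comparisons rather than arguing from $\det D\sigma\equiv 1$ directly, and note that the operator appearing in the conclusion of both your argument and the paper's proof is $\mathcal{L}_{\phi_{\e}}$ (untilded), consistent with how the estimate is later used, so the tilde in the displayed corollary statement is evidently a typo.
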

\begin{proof}[Proof of the corollary]
First we state the following lemma borrowed from \cite{chung_2014}.
\begin{lemma}\label{transformation norm equiv lema}
For any function $w \in C^\infty_c(\Omega)$ we have following two relations
\begin{equation*}
\lVert w \rVert_{L^2(\Omega)} \simeq \lVert \tilde{w} \rVert_{L^2(\tilde{\Omega})}, \qquad \quad
\lVert w \rVert_{H^1(\Omega)} \simeq \lVert \tilde{w} \rVert_{H^1(\tilde{\Omega})}.
\end{equation*}
\end{lemma}
Here $a \simeq b$ means for some constants $c_1,c_2>0$ one has $c_1 a \leq b \leq c_2 a$.
Note that using duality one can prove that
$\lVert w \rVert_{H^{-1} (A_0)} \simeq \lVert \tilde{w} \rVert_{H^{-1} (\R)}$.

Having the Proposition \ref{Target_Proposition} we get for small enough $0<h<\e$,
\begin{equation*}
C\frac{h}{\sqrt{\e}}\lVert \tilde{w} \rVert_{L^2(\tilde{\Omega})}
\leq \lVert \tilde{\mathcal{L}}_{\phi_{\e}} \tilde{w} \rVert_{H^{-1}(\R)},\quad \forall \tilde{w} \in C^{\infty}_c(\tilde{\Omega}).
\end{equation*}

Let $w \in C^{\infty}_c(\Omega)$, then using the relation $\tilde{\mathcal{L}}_{\phi_{\e}} \tilde{u} + hE_1\tilde{u} = -\widetilde{\mathcal{L}_{\phi_{\e}}u}$, we get
\begin{equation*}
\begin{gathered}
C\frac{h}{\sqrt{\e}}\lVert \tilde{w} \rVert_{L^2(\tilde{\Omega})}
\leq C\lVert \tilde{\mathcal{L}}_{\phi_{\e}} \tilde{w} \rVert_{H^{-1}(\R)}
\leq \lVert \widetilde{\mathcal{L}_{\phi_{\e}} w} \rVert_{H^{-1}(\R)}
	+ h\lVert E_1 \tilde{w} \rVert_{H^{-1}(\R)}\\
\implies C\frac{h}{\sqrt{\e}}\lVert \tilde{w} \rVert_{L^2(\tilde{\Omega})}
\leq \lVert \widetilde{\mathcal{L}_{\phi_{\e}} w} \rVert_{H^{-1}(\R)}
	+ h\lVert \tilde{w} \rVert_{L^2(\R)}\\
\implies C\frac{h}{\sqrt{\e}}\lVert \tilde{w} \rVert_{L^2(\tilde{\Omega})}
\leq \lVert \widetilde{\mathcal{L}_{\phi_{\e}} w} \rVert_{H^{-1}(\R)}
\leq \lVert \mathcal{L}_{\phi_{\e}} w \rVert_{H^{-1}(A_0)}, \quad \mbox{for }h>0 \mbox{ small}.
\end{gathered}
\end{equation*}

On the other hand we get
\begin{equation*}
\frac{h}{\sqrt{\e}}\lVert w \rVert_{L^2(\Omega)}
\leq C\frac{h}{\sqrt{\e}}\lVert \tilde{w} \rVert_{L^2(\tilde{\Omega})}
\end{equation*}

Hence combining the above two estimates, for $0<h<\e$ small enough, we get
\begin{equation*}
\frac{h}{\sqrt{\e}}\lVert w \rVert_{L^2(\Omega)}
\leq \lVert \mathcal{L}_{\phi_{\e}} w \rVert_{H^{-1}(A_0)}, \quad \forall w \in C^{\infty}_c(\Omega).
\end{equation*}
\end{proof}

We will prove the Proposition \ref{Target_Proposition} by dividing it into two cases for small and large frequencies. To define our notion of small and large frequencies let us write $\mathcal{S}(\R)$ to be the restrictions of Schwartz functions to $\R$.
Let $K>0$ be as in the assumption-3.
We define $r_1,r_2,\delta_1,\delta_2$ be such that
\begin{equation*}
\frac{K^2}{1 + K^2} < r_1 < r_2 \leq \frac{1}{2} + \frac{K^2}{2(1+K^2)} < 1,
\end{equation*}
and $\delta_1,\delta_2>0$. Define a smooth cutoff function $\rho \in C^{\infty}_c(\mathbb{R}^{n-1})$ so that
\begin{equation*}
\begin{aligned}
\rho(\xi) =
			\begin{cases} 	0, 		& \mbox{if } \lvert \xi \rvert^2 > r_2 \mbox{ or } \lvert \xi_{n-1} \rvert>\delta_2 \\
							1, 		& \mbox{if } \lvert \xi \rvert^2 \leq r_1 \mbox{ and } \lvert \xi_{n-1} \rvert\leq \delta_1.
			\end{cases}
\end{aligned}
\end{equation*}

Let us denote $\hat{v}(x_1,\xi)$ as semiclassical Fourier transform of a function $v$ in the $\p{x}$ variables. For any $w\in C^{\infty}_c(\tilde{\Omega})$ define $\hat{w}_s = \rho(\xi)\hat{w}$ and $\hat{w}_l = (1-\rho(\xi))\hat{w}$.

Here we state the two lemmas for small and large frequencies.

\begin{lemma}[Small frequency lemma]\label{small freq lemma}
There exists $r_1<r_2$ and $\delta_1<\delta_2$ such that
for $0<h<\e$ small enough and for all $w \in C^{\infty}_c(\tilde{\Omega})$ we have
\begin{equation}\label{small freq estimate}
C\frac{h}{\sqrt{\e}}\lVert w_s \rVert_{L^2(\R)}
\leq \lVert \tilde{\mathcal{L}}_{\phi_{\e}} w_s \rVert_{H^{-1}(\R)}
+ h\lVert w \rVert_{L^2(\tilde{M})}.
\end{equation}
\end{lemma}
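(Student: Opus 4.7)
The approach is to perform partial Fourier analysis in the tangential direction $\p{x}$ and analyze the resulting family of second-order ODEs in $x_1$. Let $\hat{\cdot}$ denote the semiclassical partial Fourier transform in $\p{x}$. Applied to $\tilde{\mathcal{L}}_{\phi_{\e}}w_s$, this yields a one-parameter family (parametrized by $\xi$) of ODEs whose full $(x_1,\xi,\tau)$-symbol reads
\[
\hat P(x_1,\xi,\tau)
= (1+|\nabla_{\p{x}}f|_{g_0}^2)\tau^2
- 2\bigl[(1+hx_1/\e)+i\langle\nabla_{\p{x}}f,\xi\rangle_{g_0}\bigr]\tau
+ (1+hx_1/\e)^2 - g_0^{jk}\xi_j\xi_k.
\]
On the Fourier support of $\hat w_s$ one has $|\xi|^2\le r_2<1$ and $|\xi_{n-1}|\le \delta_2$, and combined with $|g_0^{jk}-\delta^{jk}|<\delta$ and $|\nabla_{g_0}f-Ke_{n-1}|<\mu$, this places $\hat P$ close to its constant-coefficient Euclidean model, with deviations controlled by the parameters $\delta$, $\mu$ and $h/\e$.

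The plan is then to build an $H^{-1}_{x_1}\to L^2_{x_1}$ parametrix for $\hat P(x_1,\xi,h\partial_{x_1})$ uniformly in $\xi$ on $\mathrm{supp}\,\rho$, closely mirroring the scheme used by Chung \cite{chung_2014} in the Euclidean setting. Factoring the principal part in $\tau$ as a product of two first-order symbols $(h\partial_{x_1}-a_+(x_1,\xi))(h\partial_{x_1}-a_-(x_1,\xi))$ and inverting each by an integrating-factor formula reduces matters to two scalar ODE inversions; the crucial gain by a factor $\sqrt{\e}/h$ over the naive $L^2$ bound comes from integration by parts against the convex weight $\phi_\e$, using the positivity $\phi_\e''=h/\e>0$ that drives the entire Carleman argument. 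The numerical ranges $\tfrac{K^2}{1+K^2}<r_1<r_2\le \tfrac12+\tfrac{K^2}{2(1+K^2)}$ are dictated by the location of the frequency at which the two roots of the symbol can coalesce, while the condition $|\xi_{n-1}|\le \delta_2$, together with the lower-order imaginary contribution $\langle\nabla f,\xi\rangle_{g_0}\approx K\xi_{n-1}$ and the $h/\e$ convexity, keeps the two first-order inversions effective uniformly in $\xi$. Reassembling in $\xi$, where $\rho$ acts as a bounded Fourier multiplier, then gives the claimed inequality up to errors.

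The errors are of two kinds. First, $[\rho(\xi),\tilde{\mathcal L}_{\phi_\e}]$ does not vanish because the coefficients of $\tilde{\mathcal L}_{\phi_\e}$ depend on $x_1$; this commutator is of order $h$ in the semiclassical scale and its contribution is dominated by $Ch\|w\|_{L^2(\tilde M)}$, which is precisely the additive term in the statement of the lemma. Second, the lower-order $hE_1$ correction from the change of coordinates, plus the symbolic error of size $O(\delta+\mu)$ between $\hat P$ and its Euclidean model, produce terms that either absorb into the left-hand side (for $\delta$, $\mu$ and $h/\e$ small enough) or are rolled into the same additive error. The main technical obstacle is maintaining the uniform $H^{-1}_{x_1}\to L^2_{x_1}$ inversion of $\hat P$ across the full support of $\rho$, particularly near the critical frequency $|\xi|^2=K^2/(1+K^2)$ where the discriminant of the symbol becomes small; at such frequencies one must simultaneously exploit the imaginary contribution from $\langle\nabla f,\xi\rangle_{g_0}$, the convexity $\phi_\e''=h/\e$, and the specific tuning of $r_1,r_2,\delta_1,\delta_2$ to close the estimate uniformly.
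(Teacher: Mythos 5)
Your outline captures the high-level framework (work in the tangential Fourier variable $\p{x}$, identify roots of the characteristic symbol on $\mathrm{supp}\,\rho$, reduce to a family of ODEs in $x_1$, and borrow Chung's scheme), and you correctly flag the danger near $\lvert\xi\rvert^2=K^2/(1+K^2)$. However, the mechanism you propose is the wrong one for this lemma: you suggest \emph{factoring} the principal part as $(h\partial_{x_1}-a_+)(h\partial_{x_1}-a_-)$ and inverting each first-order factor. That factorization is precisely what the paper uses for the \emph{large} frequency case (Lemma \ref{large freq lemma}, with symbols $G_\pm$), and it is workable there because the discriminant is bounded away from zero on $\mathrm{supp}(1-\rho)$. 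At small frequencies the two roots can coalesce, so a two-factor inversion is not uniform in $\xi$ — which is exactly why the paper handles this case differently.

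What you are missing is the specific device the paper uses to bypass this: a \emph{single} first-order operator $J_s$ with symbol $F_s(\xi)+h\partial_{x_1}$, where $F_s$ is a \emph{smoothed} version of the discontinuous root $F$ with $\lvert F-F_s\rvert\le\delta$ on $\mathrm{supp}\,\rho$, together with the kernel lemma (Lemma \ref{lemma for g}): $\lVert J_s u\rVert_{H^{-1}(\R)}\simeq\lVert u-\g\rVert_{L^2(\R)}$ for the explicit element $\g$ of $\ker J_s$. The $h/\sqrt{\e}$ gain is not derived by a fresh integration by parts against $\phi_\e''$; it is \emph{inherited} by applying the already-proved $L^2$ Carleman estimate \eqref{initial carleman est on omega tilde} to $\chi J_s^{-1}w_s\in C^\infty_c(\tilde\Omega)$. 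Lemma \ref{boundedness lemma} and Lemma \ref{commutativity with 2nd order op} then convert the $H^1$ norm on the left to $\lVert w_s\rVert_{L^2}$, and Lemma \ref{lemma for g} converts $\lVert\tilde{\mathcal L}_{\phi_\e}J_s^{-1}w_s\rVert_{L^2}$ into $\lVert J_s\tilde{\mathcal L}_{\phi_\e}J_s^{-1}w_s\rVert_{H^{-1}}$ — but only after showing $\lVert\g\rVert_{L^2}\le\tfrac12\lVert u\rVert_{L^2}$, which is the real technical heart. That estimate uses the quantitative approximate-root property $\lvert(1+K^2)\overline{F_s}^2-2(1+iK\xi_{n-1})\overline{F_s}+(1-\lvert\xi\rvert^2)\rvert\le C_\delta\lvert\overline{F_s}\rvert$ plus the smallness of $\lvert g_0^{jk}-\delta^{jk}\rvert$ and $\lvert\nabla_{g_0}f-Ke_{n-1}\rvert$. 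Your proposal names the obstacle but offers a tool (two-factor parametrix) that fails there, and omits the $\g$-lemma and the smoothing $F\mapsto F_s$ that actually close the argument.
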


For the other part $w_l$ we have the following lemma
\begin{lemma}[Large frequency lemma]\label{large freq lemma}
For $0<h<\e$ small enough and for all $w \in C^{\infty}_c(\tilde{\Omega})$ we have
\begin{equation}\label{big freq estimate}
C\frac{h}{\sqrt{\e}}\lVert w_l \rVert_{L^2(\R)}
\leq \lVert \tilde{\mathcal{L}}_{\phi_{\e}} w_l \rVert_{H^{-1}(\R)}
+ h\lVert w \rVert_{L^2(\tilde{M})}
\end{equation}
\end{lemma}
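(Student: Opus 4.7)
My plan is to take the semiclassical Fourier transform in the tangential variables $\p{x}$, reducing the estimate to a one-parameter family of second-order ODEs in $x_1$, and then to use the ``ellipticity'' of these ODEs on the support of $1-\rho$ to invert them with a gain of one tangential derivative, thereby converting the $L^2$ norm on the right-hand side into the $H^{-1}$ norm.

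After Fourier-transforming in $\p{x}$, I freeze the smooth $\p{x}$-coefficients $g_0^{jk}(\p{x}), \gamma_f(\p{x}), \beta_f(\p{x})$ at the constant model values $\delta^{jk}, K, Ke_{n-1}$. Assumptions (1) and (3) give pointwise errors of size $O(\delta+\mu)$, and the corresponding commutators with the Fourier cutoff $1-\rho$ yield a remainder controlled by $Ch\,\lVert w \rVert_{L^2(\tilde M)}$ once $\delta,\mu$ are small enough, which is absorbed into the second term on the right of \eqref{big freq estimate}. For each fixed $\xi = (\xi'',\xi_{n-1})\in\mathbb R^{n-2}\times\mathbb R$ the frozen operator takes the form
\begin{equation*}
P_\xi \hat{w}_l := (1+K^2)h^2\partial_{x_1}^2\hat w_l - 2\bigl(\alpha(x_1)+iK\xi_{n-1}\bigr) h\partial_{x_1}\hat w_l + \bigl(\alpha(x_1)^2 - |\xi|^2\bigr)\hat w_l,
\end{equation*}
with $\alpha(x_1)=1+hx_1/\e$ and characteristic roots $\tau_\pm(x_1,\xi)=\bigl[(\alpha+iK\xi_{n-1})\pm\sqrt{D}\bigr]/(1+K^2)$, $D=(1+K^2)|\xi''|^2-(K\alpha-i\xi_{n-1})^2$. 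A short calculation gives $\mathrm{Re}\,D=(1+K^2)|\xi|^2-K^2(1+\xi_{n-1}^2)$ and $\mathrm{Im}\,D=2K\alpha\xi_{n-1}$. Since $r_1>K^2/(1+K^2)$, on $\{|\xi|^2>r_1,\,|\xi_{n-1}|\leq\delta_1\}$ one has $\mathrm{Re}\,D \geq c(1+|\xi|^2)$ for $\delta_1$ small enough, while on $\{|\xi_{n-1}|>\delta_1\}$ one has $|\mathrm{Im}\,D|\geq c\delta_1$. In either case $\tau_+$ and $\tau_-$ have non-vanishing imaginary parts and $|\tau_+-\tau_-|\geq c(1+|\xi|)$ uniformly in $x_1\in\tilde\Omega$.

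With this separation in hand I factor $P_\xi = (1+K^2)\bigl(h\partial_{x_1}-\tau_+(x_1,\xi)\bigr)\bigl(h\partial_{x_1}-\tau_-(x_1,\xi)\bigr) + hR_\xi$, where $R_\xi$ collects lower-order terms arising from $\partial_{x_1}\tau_\pm\sim h/\e$. Each first-order factor is invertible on the half-line via the explicit Volterra integral with kernel $h^{-1}e^{\tau_\pm(x_1-y)/h}$, the direction of integration chosen by the sign of $\mathrm{Im}\,\tau_\pm$; Young's inequality gives an $L^2(\mathbb R_+,dx_1)\to L^2(\mathbb R_+,dx_1)$ bound by $|\mathrm{Im}\,\tau_\pm|^{-1}$, and composing the two inverses together with the root separation yields, pointwise in $\xi$ on $\mathrm{supp}(1-\rho)$,
\begin{equation*}
\sqrt{1+|\xi|^2}\,\lVert \hat w_l(\cdot,\xi)\rVert_{L^2(\mathbb R_+)} \leq C\frac{\sqrt\e}{h}\,\lVert P_\xi\hat w_l(\cdot,\xi)\rVert_{L^2(\mathbb R_+)}.
\end{equation*}
Squaring, integrating in $\xi$, and applying Plancherel converts the prefactor $\sqrt{1+|\xi|^2}$ on the left into the passage from $L^2$ to the semiclassical $H^{-1}$ norm on the right, producing \eqref{big freq estimate} after the freezing remainder is included. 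The main obstacle is the root analysis of the second paragraph: the thresholds $r_1,r_2,\delta_1,\delta_2$ must be chosen precisely so that $D$ has either large positive real part or bounded-below imaginary part on $\mathrm{supp}(1-\rho)$ uniformly in $x_1\in\tilde\Omega$, and extracting simultaneous quantitative control on both $|\tau_+-\tau_-|$ and $\min(|\mathrm{Im}\,\tau_+|,|\mathrm{Im}\,\tau_-|)$ is the delicate point.
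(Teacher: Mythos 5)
Your architecture — Fourier transform in $\p{x}$, factor the second-order operator into first-order factors, invert each factor — is in the same spirit as the paper's proof (which factors $\tilde{\mathcal{L}}_{\phi_\e}$ as $(h\partial_{x_1}-T_{G_+})(1+|\gamma_f|^2)(h\partial_{x_1}-T_{G_-})+hE_1$ and compares the first factor to the model operator $J_l$). But the details in steps 2--3 contain concrete errors that break the argument, and they are not incidental: they hide precisely the mechanism the paper was built to handle.

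First, the Young-inequality bound for the Volterra inverse of $h\partial_{x_1}-\tau$ on the half-line is $|\mathrm{Re}\,\tau|^{-1}$, not $|\mathrm{Im}\,\tau|^{-1}$: the $L^1$ norm of the kernel $h^{-1}e^{\tau s/h}$ on $\{s\geq 0\}$ is $|\mathrm{Re}\,\tau|^{-1}$ (and converges only when $\mathrm{Re}\,\tau<0$; for $\mathrm{Re}\,\tau>0$ one must integrate backwards from $+\infty$). So the entire root analysis in your second paragraph, which aims to show $\mathrm{Im}\,\tau_\pm$ are bounded away from zero, is aimed at the wrong quantity. It is also false as stated: when $\xi_{n-1}=0$ and $|\xi|^2>r_1$ (which lies in $\mathrm{supp}(1-\rho)$), the discriminant $D$ is real and positive, $\sqrt D$ is real, and since $\alpha$ is real one gets $\mathrm{Im}\,\tau_\pm=0$.

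Second and more fundamentally, the pointwise-in-$\xi$ inversion scheme cannot produce the $h/\sqrt\e$ constant on the support of $1-\rho$. The paper's thresholds satisfy $\frac{K^2}{1+K^2}<r_1<r_2\leq\frac{1+2K^2}{2+2K^2}<1$, so $w_l$ contains Fourier modes in the annulus $r_1<|\xi|^2<1$ with $|\xi_{n-1}|$ small. There $D\approx(1+K^2)|\xi|^2-K^2\in(0,1)$, so $0<\sqrt D<1$ and both $\mathrm{Re}\,\tau_\pm=\frac{\alpha\pm\sqrt D}{1+K^2}$ are positive. With both characteristic roots lying in the right half-plane, the half-line Volterra scheme has no choice of integration directions compatible with the boundary condition at $x_1=0$ and decay at $+\infty$; there is no pointwise bound of the quality you claim. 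This is exactly where the convexified weight $\phi_\e$ and the globally proved Carleman estimate of Proposition \ref{initial carleman estimate_boundary} do the real work: the paper never tries to invert $\tilde{\mathcal{L}}_{\phi_\e}$ pointwise in $\xi$, but instead applies the global Carleman estimate \eqref{initial carleman est on omega tilde} to $J_l^{-1}w_l$ and then uses Lemma \ref{lemma for g_1} (the equivalence $\lVert J_l u\rVert_{H^{-1}}\simeq\lVert u-\g\rVert_{L^2}$) to trade the $L^2$ for $H^{-1}$ after showing $\g$ is a small perturbation of $u$.

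Third, the claim that the coefficient-freezing errors become $Ch\,\lVert w\rVert_{L^2(\tilde M)}$ for $\delta,\mu$ small is unjustified. Those errors are of the form $(g_0^{jk}-\delta^{jk})h^2\partial_{jk}$, $(|\gamma_f|^2-K^2)h^2\partial_{x_1}^2$, $\langle\beta_f-Ke_{n-1},h\nabla_{\p{x}}\rangle h\partial_{x_1}$, hence of size $O(\delta+\mu)\lVert w_l\rVert_{H^2}$ with \emph{no} gain of $h$; $\lVert w_l\rVert_{H^2}$ is not controlled by $\lVert w\rVert_{L^2}$. In the paper these terms appear in the estimate \eqref{step_9} of $T_{\overline{F_l}-G_+}z$ and are absorbed into the \emph{main} term because $\lVert z\rVert^2_{H^1}\lesssim\lVert\tilde{\mathcal{L}}_{\phi_\e}v\rVert^2_{L^2}+h^2\lVert v\rVert^2_{H^1}$, not into an $O(h)$ remainder. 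You would need the analogue of that absorption mechanism (which requires keeping $z$ as the intermediate variable from the pseudodifferential factorization) rather than pushing the errors into the $h\lVert w\rVert_{L^2}$ term of \eqref{big freq estimate}.

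Finally, even granting the pointwise estimate, squaring, integrating in $\xi$, and dividing by $\sqrt{1+|\xi|^2}$ yields control of $\lVert(1+|\xi|^2)^{-1/2}\widehat{\tilde{\mathcal{L}}_{\phi_\e}w_l}\rVert_{L^2}$, which dominates (is $\geq$) the $H^{-1}(\R)$ norm; proving the weaker inequality with the larger right-hand side does not give the lemma. The passage from $L^2$ to $H^{-1}$ in the paper is effected precisely by the $J_l$--$\g$ lemma, which accounts for the $x_1$-derivative as well and for the one-dimensional cokernel $e^{-F_l x_1/h}$ of $J_l$ on the half-line.
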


Here we refer to the calculation in \cite[Section 4]{chung_2014} to obtain the estimate in Proposition \ref{Target_Proposition}.
In the next two subsections we will prove the above mentioned lemmas.

\subsection{Small frequency case (Lemma \ref{small freq lemma}):}
Following the approach of \cite{chung_2014} (Section 3) let us consider the function
\begin{equation*}
F:\mathbb{R}^{n-1}\rightarrow\mathbb{C}, \qquad \text{defined as}
\end{equation*}
\begin{equation*}
\overline{F(\xi)} = \frac{1}{1+\lvert K \rvert^2}\left( 1+iK\xi_{n-1} + \sqrt{2iK\xi_{n-1} - (K\xi_{n-1})^2 + (1+\lvert K\rvert^2)\lvert \xi \rvert^2 - \lvert K \rvert^2 } \right),
\end{equation*}
where the branch of square root is considered with the non negative imaginary axis.

Observe that $F$ is smooth except where
\begin{equation*}
\tilde{F}(\xi) = 2iK\xi_{n-1} - (K\xi_{n-1})^2 + (1 + \lvert K \rvert^2)\lvert \xi \rvert^2 - \lvert K \rvert^2
\end{equation*}
belongs to the non negative real axis. That is, this happens when
\begin{equation*}
\xi_{n-1} = 0 \qquad and \quad \lvert \xi \rvert^2 \geq \frac{\lvert K \rvert^2}{1 + \lvert K \rvert^2}.
\end{equation*}
The discontinuity in this case is a jump of size $2\sqrt{(1+\lvert K \rvert^2)\lvert \xi\rvert^2 - \lvert K \rvert^2}$.
But, on the support of $\rho$, we have $\lvert \xi \rvert \leq r_2$.
So, for $r_2$ close to $\frac{\lvert K \rvert^2}{1 + \lvert K \rvert^2}$ the jump is small.
Therefore, on the support of $\rho$, we can approximate $F$ by a smooth function $F_s$ such that, for small $\delta$
\begin{equation*}
\lvert F(\xi) - F_{s}(\xi) \rvert \leq \delta.
\end{equation*}

Observe that the bound on derivatives of $F_s$ may depend on $\delta$. Note that without loss of generality, here we can use the same $\delta>0$ as in Equation \eqref{step_10}.

Now, let us calculate bounds on $F_s$.
On the support of $\rho$,
\begin{equation*}
\Im(\tilde{F}) \in [-2K\delta_2 , 2k \delta_2].
\end{equation*}

Choose $r_2$ so close to $\frac{\lvert K \rvert^2}{1 + \lvert K \rvert^2}$ so that $(1+\lvert K\rvert^2)r_2 - \lvert K \rvert^2 \leq \delta_2$. Then
\begin{equation*}
\begin{aligned}
\Re(\tilde{F}) &= -(K\xi_{n-1})^2 - \lvert K \rvert^2 + (1 + \lvert K \rvert^2)\lvert \xi \rvert^2\\
&\leq -(K\xi_{n-1})^2 + \delta_2 \leq \delta_2.
\end{aligned}
\end{equation*}

Hence, as the bounds of both of $\Re(\tilde{F})$ and $\Im (\tilde{F})$ depends on $\delta_2$, we can choose $\delta_2$ small so that on the support of $\rho(\xi)$, $\lvert\Re(\sqrt{\tilde{F}})\rvert \leq 1/3$.
Hence, we can have both of $\Re(F_s)$ and $\lvert F_s\rvert$ are $> \frac{1}{2+2\lvert K\rvert^2}$, for $\delta$ small.

One can extend $F_s$ on whole $\mathbb{R}^{n-1}$ such that it satisfies $\Re(F_s), \lvert F_s\rvert > \frac{1}{2 + 2\lvert K \rvert^2}$, for all $\xi$ and $\Re(F_s), \lvert F_s\rvert \simeq (1+\lvert \xi \rvert)$ for $\lvert \xi \rvert$ large.

For $u\in \mathcal{S}(\R)$ define the operator $J_s$ by
\begin{equation*}
\widehat{J_s u}(x_1,\xi) = \left( {F_s(\xi)} + h\partial_{x_1} \right)\hat{u}(x_1,\xi).
\end{equation*}
The adjoint of the above operator is defined as
\begin{equation*}
\widehat{J^*_s u}(x_1,\xi) = \left( {\overline{F_s(\xi)}} - h\partial_{x_1} \right)\hat{u}(x_1,\xi).
\end{equation*}
These operators have right inverses defined as
\begin{equation*}
\begin{aligned}
\widehat{J^{-1}_s}u(x_1,\xi) =& h^{-1} \int_{0}^{x_1} \hat{u}(t,\xi)e^{\frac{t-x_1}{h}F_s(\xi)} \ dt,\\
\mbox{and}\qquad
\widehat{{J^*_s}^{-1}}u(x_1,\xi) =& h^{-1} \int_{x_1}^{\infty} \hat{u}(t,\xi)e^{\frac{x_1-t}{h}\overline{F_s(\xi)}} \ dt.
\end{aligned}
\end{equation*}
Each of the above is well defined functions in $\mathcal{S}(\R)$.
Note that here we can use the same form of $J_s$, as for the Euclidean case in \cite{chung_2}, mainly because we assume that the Riemannian metric is approximately Euclidean.
For boundedness properties of the above operators we state the following lemma from \cite{chung_2014}.

\begin{lemma}\label{boundedness lemma}
$J_s, J^*_s, J_s^{-1}, {J_s^*}^{-1}$ extends as bounded maps
\begin{equation*}
\begin{aligned}
J_s, J_s^* : H^1(\R) \rightarrow L^2(\R),
\quad\mbox{and}\quad
J_s^{-1}, {J_s^*}^{-1} : L^2(\R) \rightarrow H^1(\R).
\end{aligned}
\end{equation*}
Moreover, the extensions of $J^*_s$ and ${J^*_s}^{-1}$ are isomorphisms.
\end{lemma}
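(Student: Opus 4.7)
The plan is to work on the Fourier side in the tangential variables $x'$ and reduce all four bounds to one-dimensional estimates in $x_1$ parametrized by $\xi \in \mathbb{R}^{n-1}$. The key ingredients are the two properties of $F_s$ established just before the lemma: the uniform lower bound $\Re F_s(\xi) \geq \frac{1}{2+2|K|^2}$ and the linear growth $\Re F_s(\xi), |F_s(\xi)| \simeq 1+|\xi|$ for large $|\xi|$. Together with the smoothness of $F_s$, these imply that for some constant $C>0$,
\[
C^{-1}(1+|\xi|) \leq \Re F_s(\xi) \leq |F_s(\xi)| \leq C(1+|\xi|), \qquad \xi \in \mathbb{R}^{n-1},
\]
so in particular $|F_s|/\Re F_s$ is uniformly bounded. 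Since the semiclassical Fourier transform in $x'$ sends $h\partial_{x_j}$ to $i\xi_j$ for $j\geq 2$, these will deliver all the multiplier estimates needed below.

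For the boundedness of $J_s : H^1(\R) \to L^2(\R)$ (and, symmetrically, $J_s^*$), I would apply Plancherel in $x'$ to obtain
\[
\lVert J_s u \rVert_{L^2}^2 \leq 2\lVert F_s(\xi)\hat{u}\rVert_{L^2}^2 + 2\lVert h\partial_{x_1}\hat{u}\rVert_{L^2}^2 \leq C\lVert (1+|\xi|)\hat{u}\rVert_{L^2}^2 + 2\lVert h\partial_{x_1}\hat{u}\rVert_{L^2}^2 \leq C\lVert u\rVert^2_{H^1(\R)}.
\]
For $J_s^{-1}$, after extending $\hat{u}(\cdot,\xi)$ by zero to all of $\mathbb{R}$, the formula for $\widehat{J_s^{-1}u}(x_1,\xi)$ is a convolution in $x_1$ with the kernel $K_\xi(s) = h^{-1}e^{-sF_s(\xi)/h}\mathbf{1}_{s>0}$, whose $L^1(\mathbb{R})$ norm is precisely $1/\Re F_s(\xi)$. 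Young's inequality then yields
\[
\lVert \widehat{J_s^{-1}u}(\cdot,\xi)\rVert_{L^2(\mathbb{R}_+)} \leq \frac{1}{\Re F_s(\xi)}\lVert \hat{u}(\cdot,\xi)\rVert_{L^2(\mathbb{R}_+)} \leq \frac{C}{1+|\xi|}\lVert \hat{u}(\cdot,\xi)\rVert_{L^2(\mathbb{R}_+)}.
\]
Squaring, multiplying by $(1+|\xi|)^2$ (or by $|\xi_j|^2$ for $j\geq 2$), and integrating in $\xi$ will then bound $\lVert J_s^{-1}u\rVert_{L^2(\R)}$ and all tangential semiclassical derivatives $\lVert h\partial_{x_j}J_s^{-1}u\rVert_{L^2(\R)}$, $j\geq 2$, by $\lVert u\rVert_{L^2(\R)}$. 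The remaining $h\partial_{x_1}$ estimate will come from the operator identity $h\partial_{x_1}(J_s^{-1}u) = u - F_s J_s^{-1}u$, read off from $J_s J_s^{-1}u = u$, together with the uniform boundedness of $|F_s|/\Re F_s$ applied to the last term.

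The same argument, with $F_s$ replaced by $\overline{F_s}$ and the integration running over $(x_1, \infty)$ instead of $(0, x_1)$, will handle $J_s^*$ and ${J_s^*}^{-1}$. The isomorphism assertion reduces to verifying $J_s^*{J_s^*}^{-1} = I = {J_s^*}^{-1}J_s^*$, which I would check by a direct computation on Schwartz functions on the Fourier side and then extend by density using the continuity already proved. The main obstacle is ensuring the kernel bound $1/\Re F_s(\xi) \leq C/(1+|\xi|)$ holds uniformly in $\xi$: this requires simultaneously a strict positive lower bound on $\Re F_s$ near $\xi = 0$ and linear growth at infinity, and the construction of $F_s$ immediately preceding the lemma was arranged precisely so that both regimes are controlled at once.
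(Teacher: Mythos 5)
The paper does not prove this lemma at all; it is quoted directly from \cite{chung_2014}, so there is no in-paper argument to compare against. Your proof is correct and self-contained: Plancherel in $x'$ reduces everything to one-dimensional estimates parametrized by $\xi$, the convolution kernel of $J_s^{-1}$ has $L^1_{x_1}$-norm exactly $1/\Re F_s(\xi)$, and the two properties of $F_s$ established just before the lemma (the uniform positive lower bound and the linear growth at infinity) combine to give the global two-sided bound $\Re F_s(\xi)\simeq 1+|\xi|$, from which Young's inequality delivers the $L^2$ and tangential-derivative estimates, with the $h\partial_{x_1}$ estimate recovered from $h\partial_{x_1}J_s^{-1}u = u - F_s J_s^{-1}u$. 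The one point you leave implicit but which is worth flagging is why only $J_s^*$ and ${J_s^*}^{-1}$ are asserted to be isomorphisms: in checking ${J_s^*}^{-1}J_s^* = I$ the integration by parts produces a boundary term only at $t=\infty$, where $\Re F_s>0$ kills it, whereas the analogous computation for $J_s^{-1}J_s$ leaves an uncontrolled boundary term at $t=0$, so $J_s^{-1}$ is only a right inverse; making this explicit would complete the verification that the isomorphism claim is special to the starred operators.
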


Now we state the following commutativity lemma for the operator $J_s$. We will not present the proof here, as it follows from similar arguments as in the proof of \cite[Lemma 5.2]{chung_2014}.
\begin{lemma}\label{commutativity with 2nd order op}
Let $w \in \mathcal{S}(\R)$, if $Q$ is a second order semiclassical differential operator with bounded and smooth coefficients, then
\begin{equation*}
\lVert (J_sQ - QJ_s)w \rVert_{H^{-1}(\R)} \leq hc\lVert w \rVert_{H^1(\R)}.
\end{equation*}
Moreover, if $\chi \in \mathcal{S}(\R)$, then
\begin{equation*}
\Vert J_s\chi J_s^{-1} w \rVert_{L^2(\R)} \geq \lVert \chi w \rVert_{L^2(\R)} - hc\lVert w\rVert_{L^2(\R)}.
\end{equation*}
\end{lemma}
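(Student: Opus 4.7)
The plan is to treat $J_s = F_s(hD_{x'}) + h\partial_{x_1}$ as a first-order semiclassical pseudodifferential operator with $x$-independent symbol $F_s(\xi') + i\xi_1$, exploiting the property (built into the global extension of $F_s$ to $\mathbb{R}^{n-1}$) that $F_s \simeq 1 + |\xi'|$ while every derivative $\partial^{\beta}F_s$ with $|\beta|\ge 1$ is uniformly bounded. Both claims will then follow from the single mechanism that commutators with $J_s$ gain an explicit factor of $h$ because the symbol of $J_s$ does not depend on $x$.

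For the first bound I would write $Q = \sum_{|\alpha|\le 2} a_\alpha(x)(hD)^{\alpha}$ and use that $J_s$ commutes with $hD$ to reduce matters to estimating $[J_s, a_\alpha](hD)^{\alpha}$. The commutator with the differential piece is elementary: $[h\partial_{x_1}, a_\alpha] = h(\partial_{x_1}a_\alpha)$, an $h$ times a bounded multiplication. For the Fourier-multiplier piece I will invoke the standard semiclassical composition asymptotics, which give
\begin{equation*}
[F_s(hD_{x'}), a_\alpha(x)] = \frac{h}{i}\sum_{j=2}^{n}(\partial_{\xi_j}F_s)(hD_{x'})\,(\partial_{x_j} a_\alpha)(x) + h^2 R_\alpha,
\end{equation*}
where $R_\alpha$ is bounded on $L^2(\R)$; the uniform boundedness of the derivatives of $F_s$ and of $a_\alpha$ makes the leading term $h$ times a zeroth-order operator bounded on $L^2$. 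Composing with $(hD)^{\alpha}$ for $|\alpha|\le 2$ then produces an operator $H^1(\R) \to H^{-1}(\R)$ with norm $O(h)$, which after summation over $\alpha$ gives the first estimate.

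For the second inequality I would use the algebraic identity
\begin{equation*}
J_s \chi J_s^{-1} = \chi + [J_s, \chi]\,J_s^{-1},
\end{equation*}
so that the reverse triangle inequality reduces the claim to bounding $\lVert [J_s,\chi] J_s^{-1} w \rVert_{L^2(\R)}$ by $hc\lVert w \rVert_{L^2(\R)}$. The commutator $[J_s,\chi]$ is analyzed by exactly the same expansion as above, with the rapidly decreasing multiplication operator $\chi$ in place of $Q$; one obtains $[J_s,\chi] = hT$ for some $T$ bounded on $L^2(\R)$. Composition with $J_s^{-1} : L^2(\R) \to H^1(\R) \hookrightarrow L^2(\R)$, which is bounded by Lemma \ref{boundedness lemma}, preserves the $O(h)$ operator norm on $L^2(\R)$, and this is exactly the required inequality.

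The principal difficulty is technical rather than conceptual: one must justify the semiclassical composition expansion and its $h^2$ remainder estimates for the non-polynomial, linearly growing symbol $F_s$ on the half-space $\R$. This is precisely where the uniform bounds on all derivatives of $F_s$ (arranged in the construction of the smooth extension) together with the smoothness and uniform boundedness of the coefficients of $Q$ (respectively the Schwartz decay of $\chi$) are used to apply the standard semiclassical pseudodifferential calculus. Once those bounds are in hand, the remainder of the argument is orderly bookkeeping of semiclassical orders and of the single explicit factor of $h$ that enters through the Poisson bracket.
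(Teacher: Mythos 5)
Your argument is correct, and it runs along essentially the same lines as the proof this paper relies on: the paper itself omits the proof and refers to Lemma 5.2 of \cite{chung_2014}, where the same mechanism is used, namely that $J_s$ has an $x$-independent symbol $F_s(\xi)+i\xi_1$ with uniformly bounded derivatives, so every commutator with a multiplication operator (and hence with $Q$ or $\chi$) gains a factor of $h$, after which the identity $J_s\chi J_s^{-1}=\chi+[J_s,\chi]J_s^{-1}$ (valid since $J_s^{-1}$ is a right inverse) and the mapping properties of Lemma \ref{boundedness lemma} finish the second estimate exactly as you describe.
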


We define a function $\g$ as
\begin{equation*}
\hat{\g}(x_1,\xi) = \frac{2\Re F_s(\xi)}{h}\int_{0}^{\infty} \hat{u}(t,\xi)e^{-\frac{1}{h}\left( t\overline{F_s(\xi)} + x_1F_s(\xi) \right)} dt.
\end{equation*}
Here $\Re(z) \equiv \Re z$ means the real part of the complex number $z$.
Observe that $J_s \g = 0$. For this $\g$ we have the following lemma.

\begin{lemma}\label{lemma for g}
For $u\in \mathcal{S}(\R)$ and $\g$ as above, we have
\begin{equation*}
\lVert J_s u \rVert_{H^{-1}(\R)} \simeq \lVert u - \g\rVert_{L^2(\R)}.
\end{equation*}
\end{lemma}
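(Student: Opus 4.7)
The plan is to identify $\g$ as the $L^2(\R)$-orthogonal projection of $u$ onto $\ker J_s$, and then exploit the mapping properties recorded in Lemma~\ref{boundedness lemma} together with a duality argument.

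First I would check that $\g\in\ker J_s$: since $\hat{\g}(x_1,\xi)$ depends on $x_1$ only through $e^{-x_1F_s(\xi)/h}$, one has $h\partial_{x_1}\hat{\g}=-F_s\hat{\g}$, hence $(F_s+h\partial_{x_1})\hat{\g}=0$. Next I would observe that, since $\Re F_s>0$, a general element of $\ker J_s\cap L^2(\R)$ is of the form $c(\xi)e^{-x_1F_s(\xi)/h}$ in Fourier, and that minimizing $\lVert u - c\,e^{-x_1F_s/h}\rVert_{L^2(\R)}$ in $c(\xi)$ for each fixed $\xi$ produces precisely the coefficient $\tfrac{2\Re F_s}{h}\int_0^\infty \hat u(t,\xi)e^{-t\overline{F_s}/h}\,dt$ appearing in the definition of $\g$. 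Thus $\g$ is the $L^2(\R)$-orthogonal projection of $u$ onto $\ker J_s$, and in particular $\lVert u-\g\rVert_{L^2(\R)}\leq\lVert u-k\rVert_{L^2(\R)}$ for every $k\in\ker J_s$.

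For the upper estimate, since $J_s\g=0$ we have $J_su=J_s(u-\g)$. Lemma~\ref{boundedness lemma} provides $J_s^*:H^1(\R)\to L^2(\R)$ bounded; by duality $J_s:L^2(\R)\to H^{-1}(\R)$ is bounded, so $\lVert J_su\rVert_{H^{-1}(\R)}\leq C\lVert u-\g\rVert_{L^2(\R)}$. For the lower estimate I would compute $J_s^{-1}J_s u$ explicitly: integrating by parts in the defining integral of $J_s^{-1}$ yields
\[
\widehat{J_s^{-1}J_s u}(x_1,\xi)=\hat u(x_1,\xi)-\hat u(0,\xi)\,e^{-x_1F_s(\xi)/h},
\]
so $J_s^{-1}J_su=u-k_0$ for some $k_0\in\ker J_s$. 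A short calculation (swapping the order of integration in the Schwartz kernel) identifies $(J_s^{-1})^*$ with ${J_s^*}^{-1}$; dualizing the boundedness ${J_s^*}^{-1}:L^2(\R)\to H^1(\R)$ from Lemma~\ref{boundedness lemma} then gives $J_s^{-1}:H^{-1}(\R)\to L^2(\R)$ bounded. Combining with the projection inequality from the previous paragraph,
\[
\lVert u-\g\rVert_{L^2(\R)}\leq\lVert u-k_0\rVert_{L^2(\R)}=\lVert J_s^{-1}J_s u\rVert_{L^2(\R)}\leq C\lVert J_s u\rVert_{H^{-1}(\R)}.
\]

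The main subtlety lies in this last step: one has to recognize that the natural extension of $J_s^{-1}$ from $L^2\to H^1$ to an operator $H^{-1}\to L^2$ really is the Hilbert-space adjoint of ${J_s^*}^{-1}$, rather than some unrelated left inverse of $J_s$. Once this identification is in place, the boundedness in Lemma~\ref{boundedness lemma} transfers by duality, and the characterization of $\g$ as the best kernel approximation converts the estimate on $u-k_0$ into the claimed estimate on $u-\g$.
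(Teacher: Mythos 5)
Your upper bound is essentially the paper's argument and is fine: since $J_s\g=0$ one has $J_su=J_s(u-\g)$, and the $L^2\to H^{-1}$ boundedness of $J_s$ follows by duality from the $H^1_0\to L^2$ boundedness of $J_s^*$ in Lemma~\ref{boundedness lemma}. Your identification of $\g$ as the $L^2(\R)$-orthogonal projection of $u$ onto $\ker J_s$ is also correct and is a clean reading of the formula for $\hat{\g}$, which the paper leaves implicit.

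The gap is in the very last step. You claim that $J_s^{-1}$ extends to a bounded map $H^{-1}(\R)\to L^2(\R)$ by dualizing ${J_s^*}^{-1}:L^2(\R)\to H^1(\R)$. This fails: $H^{-1}(\R)$ is the dual of $H^1_0(\R)$, whereas ${J_s^*}^{-1}$ does \emph{not} map into $H^1_0(\R)$, since $\widehat{{J_s^*}^{-1}f}(0,\xi)=h^{-1}\int_0^\infty\hat f(t,\xi)e^{-t\overline{F_s(\xi)}/h}\,dt$ is generically nonzero. Dualizing therefore gives boundedness of $J_s^{-1}$ with respect to the $(H^1(\R))^*$-norm, which dominates the $H^{-1}(\R)$-norm, so the inequality points the wrong way. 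In fact $J_s^{-1}:H^{-1}(\R)\to L^2(\R)$ is genuinely unbounded: taking $\hat\psi_n(x_1,\xi)=n\,e^{-nx_1}\chi(\xi)$ with $\chi$ a fixed cutoff, a short computation gives $\lVert\psi_n\rVert_{H^{-1}(\R)}\lesssim n^{-1/2}\to 0$ while $\widehat{J_s^{-1}\psi_n}(x_1,\xi)\to h^{-1}\chi(\xi)e^{-x_1F_s(\xi)/h}$, so $\lVert J_s^{-1}\psi_n\rVert_{L^2(\R)}$ stays bounded away from zero.

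The fix is already hiding in your orthogonality observation. By Plancherel, $\langle u-\g,k\rangle_{L^2(\R)}=0$ for every $k\in\ker J_s$ (with $\hat k=c(\xi)e^{-x_1F_s(\xi)/h}$) is precisely the statement $\int_0^\infty(\hat u-\hat\g)(t,\xi)e^{-t\overline{F_s(\xi)}/h}\,dt=0$, i.e.\ that $w:={J_s^*}^{-1}(u-\g)$ has vanishing trace at $x_1=0$ and hence lies in $H^1_0(\R)$. This is exactly what the paper exploits: one tests the $H^{-1}$-norm of $J_su=J_s(u-\g)$ against this particular $w\in H^1_0(\R)$, getting $\langle J_s(u-\g),w\rangle=\langle u-\g,J_s^*w\rangle=\lVert u-\g\rVert_{L^2}^2$ together with $\lVert w\rVert_{H^1_0}\leq C\lVert u-\g\rVert_{L^2}$, and the lower bound follows. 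So your projection picture is equivalent to the paper's boundary-vanishing observation, but it has to be used as a \emph{membership} statement ($w\in H^1_0$) rather than through a nonexistent $H^{-1}\to L^2$ bound on $J_s^{-1}$.
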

\begin{proof}
Let $u\in \mathcal{S}(\R)$ and $\g$ is defined as above. First let us show that indeed $\g \in L^2(\R)$.
\begin{equation*}
\begin{aligned}
\int_{0}^{\infty}& \lvert \hat{\g}(x_1,\xi)\rvert^2 dx_1 \\
=& \left|\frac{2\Re F_s(\xi)}{h}\right|^2 \int_{0}^{\infty} \left| \int_{0}^{\infty} \hat{u}(t,\xi)e^{-\frac{1}{h}\left( t\overline{F_s(\xi)} + x_1F_s(\xi) \right)} dt\right|^2 dx_1\\
\leq& C\left|\frac{2\Re F_s(\xi)}{h}\right|^2 \int_{0}^{\infty} \left| \hat{u}(t,\xi)\right|^2 dt \int_{0}^{\infty}\left|e^{-\frac{1}{h}\left( t\overline{F_s(\xi)} + x_1F_s(\xi) \right)}\right|^2 dt\ dx_1\\
\leq& C\left|\frac{2\Re F_s(\xi)}{h}\right|^2 \int_{0}^{\infty} \left| \hat{u}(t,\xi)\right|^2 dt \int_{0}^{\infty}\left|e^{-\frac{1}{h}\left( t\overline{F_s(\xi)}\right)}\right|^2 dt
\int_{0}^{\infty}\left|e^{-\frac{1}{h}\left( x_1F_s(\xi)\right)}\right|^2 dx_1\\
\leq& C\left|\frac{2\Re F_s(\xi)}{h}\right|^2 \int_{0}^{\infty} \left| \hat{u}(t,\xi)\right|^2 dt \int_{0}^{\infty}e^{-\frac{2}{h}\left( t\Re\overline{F_s(\xi)}\right)} dt
\int_{0}^{\infty}e^{-\frac{2}{h}\left( x_1\Re F_s(\xi)\right)} dx_1\\
\leq& C\int_{0}^{\infty} \left| \hat{u}(t,\xi)\right|^2 dt\\
\end{aligned}
\end{equation*}
So, $\g \in L^2(\R)$ and $\lVert \g\rVert^2_{L^2(\R)} \leq C \lVert u\rVert^2_{L^2(\R)}$.

Now, observe that $\widehat{J_s\g}(x_1,\xi) = 0$.
Hence,
\begin{equation*}
\begin{aligned}
\lVert J_s u \rVert_{H^{-1}(\R)}
&= \sup_{w\in H^1_0(\R) \setminus \{0\} } \frac{\lvert \langle J_s u, w \rangle\rvert}{\lVert w\rVert_{H^1_{0}(\R)}}
= \sup_{w\in H^1_0(\R) \setminus \{0\} } \frac{\lvert \langle J_s (u-\g), w \rangle\rvert}{\lVert w\rVert_{H^1_{0}(\R)}},
\\
\implies \lVert J_s u \rVert_{H^{-1}(\R)}
&\simeq \sup_{w\in H^1_0(\R) \setminus \{0\} } \frac{\lvert \langle (u-\g), J_s^*w \rangle\rvert}{\lVert J_s^*w\rVert_{L^2(\R)}}
\leq C\lVert u-\g \rVert_{L^2(\R)}.
\end{aligned}
\end{equation*}

For the other part observe that our choice of $\g$ implies ${J^*_s}^{-1}(u-\g) \in H^1_0(\R)$. If $u=\g$ then the proof is complete, else we take $w={J^*_s}^{-1}(u-\g)$ and obtain
\begin{equation*}
\lVert J_s u \rVert_{H^{-1}(\R)} \geq C\lVert u-\g \rVert_{L^2(\R)}.
\end{equation*}
\end{proof}\vspace*{-8pt}

Let $\chi(x_1,\p{x}) \in C^{\infty}(\R)$ be a cutoff function with $\chi = 1$ on $\tilde{M}$ and $\chi = 0$ outside $\tilde{\Omega}$. When $w \in C_c^{\infty}(\tilde{M})$ then $w_s \in \mathcal{S}(\R)$, and supported away from $x_1 = 0$. Hence $J_s^{-1}w_s \in \mathcal{S}(\R)$ is also supported away from $x_1 = 0$ and so $\chi J_s^{-1}w_s \in C_c^{\infty}(\tilde{\Omega})$. Hence, by \eqref{initial carleman est on omega tilde} we have
\begin{equation*}
\begin{aligned}
\hspace*{-4pt}&\frac{h}{\sqrt{\e}} \lVert \chi J_s^{-1}w_s \rVert_{H^1(\tilde{\Omega})}
\leq C \lVert \tilde{\mathcal{L}}_{\phi_{\e}}(\chi J_s^{-1}w_s) \rVert_{L^2(\tilde{\Omega})}\\
\hspace*{-4pt}\implies&\frac{h}{\sqrt{\e}} \lVert J_s \chi J_s^{-1}w_s \rVert_{L^2(\tilde{\Omega})}
\leq C \lVert \tilde{\mathcal{L}}_{\phi_{\e}}(\chi J_s^{-1}w_s) \rVert_{L^2(\R)}\quad\qquad \mbox{using Lemma \ref{boundedness lemma},}\\
\hspace*{-4pt}\implies&\frac{h}{\sqrt{\e}} \lVert \chi w_s \rVert_{L^2(\tilde{\Omega})}
\leq C \lVert \tilde{\mathcal{L}}_{\phi_{\e}}(\chi J_s^{-1}w_s) \rVert_{L^2(\R)} + c\frac{h^2}{\e}\lVert w_s\rVert_{L^2(\R)}, \mbox{ using Lemma \ref{commutativity with 2nd order op}}.
\end{aligned}
\end{equation*}
Now, observe that $\chi = 1$ on the support of $w$. Hence,
\begin{equation*}
\chi w_s = \chi P w = Pw + \mathcal{O}(h^{\infty})E_0w = w_s + \mathcal{O}(h^{\infty})E_0w
\end{equation*}
where $E_0$ is a order $0$ pseudo differential operator on $\mathbb{R}^{n-1}$.
Which implies
\begin{equation}\label{step_4}
\begin{aligned}
\frac{h}{\sqrt{\e}} \lVert w_s \rVert_{L^2(\R)}
\leq & C \lVert \tilde{\mathcal{L}}_{\phi_{\e}}(\chi J_s^{-1}w_s) \rVert_{L^2(\R)} + c\frac{h^2}{\e}\lVert w_s\rVert_{L^2(\R)} + \mathcal{O}(h^{\infty})\lVert w \rVert_{L^2(\R)}\\
\leq &C \lVert \chi \tilde{\mathcal{L}}_{\phi_{\e}}(J_s^{-1}w_s) \rVert_{L^2(\R)} + h\lVert J_s^{-1} w_s \rVert_{H^1(\R)} + \mathcal{O}(h^{\infty})\lVert w \rVert_{L^2(\R)}\\
\leq &C \lVert \tilde{\mathcal{L}}_{\phi_{\e}}(J_s^{-1}w_s) \rVert_{L^2(\R)} + \mathcal{O}(h^{\infty})\lVert w \rVert_{L^2(\R)}.
\end{aligned}
\end{equation}
Now we want to take $u = \tilde{\mathcal{L}}_{\phi_{\e}}(J_s^{-1}w_s)$ in Lemma \ref{lemma for g} so that we get the following bound
\begin{equation*}
\lVert \tilde{\mathcal{L}}_{\phi_{\e}}(J_s^{-1}w_s) \rVert_{L^2(\R)} \leq C \lVert J_s \tilde{\mathcal{L}}_{\phi_{\e}}J_s^{-1}w_s \rVert_{H^{-1}(\R)}.
\end{equation*}

For this now we will show that $\lVert \g \rVert_{L^2(\R)} \leq \frac{1}{2}\lVert u \rVert_{L^2(\R)}$.
For sake of notational simplicity, we will denote Fourier transform in $\p{x}$ variable of a function $v(x_1,\p{x})$ as $\mathcal{F}(v)(x_1,\xi)$ at some places in the calculation below. We will also use the symbol $\hat{v}(x_1,\xi)$ for the same purpose, where it seems convenient.

Writing $v = J^{-1}_s w_s$ we get
\begin{equation*}
\begin{aligned}
\hat{\g}(x_1,\xi)
=& \frac{2\Re F_s}{h}\int_{0}^{\infty} \widehat{\tilde{\mathcal{L}}_{\phi_{\e}}v}(t,\xi) e^{-\frac{1}{h}(t\overline{F_s(\xi)} + x_1F_s(\xi))} dt.
\end{aligned}
\end{equation*}

A calculation implies,
\begin{equation}\label{estimate g_hat}
\begin{aligned}
\hat{\g}
=& \frac{2\Re F_s}{h}\int_{0}^{\infty} \left[(1+K^2)h^2\partial^2_t - 2(1+iK\xi_{n-1})h\partial_t + 1-\lvert \xi \rvert^2\right]\\
& \hspace{8cm} \hat{v}(t,\xi)e^{-\frac{t\overline{F_s(\xi)} + x_1F_s(\xi)}{h}} dt\\
&-\frac{2\Re F_s}{h}\int_{0}^{\infty} 2\mathcal{F}\left( \langle (\beta_f - Ke_{n-1}) ,h\nabla_{\p{x}}h\partial_t v\rangle \right) (t,\xi) e^{-\frac{1}{h}(t\overline{F_s(\xi)} + x_1F_s(\xi))} dt\\
&+ \frac{2\Re F_s}{h}\int_{0}^{\infty} \mathcal{F}\left((\lvert \gamma_f\rvert^2 - K^2) h^2\partial_t^2 v\right) (t,\xi) e^{-\frac{1}{h}(t\overline{F_s(\xi)} + x_1F_s(\xi))} dt\\
&+\frac{2\Re F_s}{h}\int_{0}^{\infty} \left[2h\frac{t}{\e}-2h\frac{t}{\e} h\partial_t + h^2\frac{t^2}{\e^2} + h^2 \mathcal{F}(\mathcal{L} - \Delta_{\p{x}}) \right] \hat{v} (t,\xi) e^{-\frac{t\overline{F_s(\xi)} + x_1F_s(\xi)}{h}} dt.
\end{aligned}
\end{equation}

Now doing an integration by parts for the first term in the above identity, we get
\begin{equation*}
\begin{aligned}
\hat{\g}
=& \frac{2\Re F_s}{h}\int_{0}^{\infty} \left[(1+K^2)\overline{F_s(\xi)}^2 -2(1+iK\xi_{n-1})\overline{F_s(\xi)} + (1-\lvert \xi \rvert^2) \right]\\
& \hspace{8cm} \hat{v}(t,\xi) e^{-\frac{t\overline{F_s(\xi)} + x_1F_s(\xi)}{h}} dt\\
&-\frac{2\Re F_s}{h}\int_{0}^{\infty} 2\mathcal{F}\left( \langle (\beta_f - Ke_{n-1}) ,h\nabla_{\p{x}}h\partial_t v\rangle \right) (t,\xi) e^{-\frac{1}{h}(t\overline{F_s(\xi)} + x_1F_s(\xi))} dt\\
&+ \frac{2\Re F_s}{h}\int_{0}^{\infty} \mathcal{F}\left((\lvert \gamma_f\rvert^2 - K^2) h^2\partial_t^2 v\right) (t,\xi) e^{-\frac{1}{h}(t\overline{F_s(\xi)} + x_1F_s(\xi))} dt\\
&+\frac{2\Re F_s}{h}\int_{0}^{\infty} \left[2h\frac{t}{\e}-2h\frac{t}{\e} h\partial_t + h^2\frac{t^2}{\e^2} + h^2 \mathcal{F}(\mathcal{L} - \Delta_{\p{x}}) \right] \hat{v} (t,\xi) e^{-\frac{t\overline{F_s(\xi)} + x_1F_s(\xi)}{h}} dt.
\end{aligned}
\end{equation*}
Using the same estimation technique used in the proof of Lemma \eqref{lemma for g} we get
\begin{equation*}
\begin{aligned}
\lVert \hat{\g} \rVert^2_{L^2(\R)}
\leq& \left\lVert (1+K^2)\overline{F_s(\xi)}^2 -2(1+iK\xi_{n-1})\overline{F_s(\xi)} + (1-\lvert \xi \rvert^2)\hat{v}(x_1,\xi) \right\rVert^2_{L^2(\R)}\\
&+\left\lVert \langle (\beta_f - Ke_{n-1}) ,h\nabla_{\p{x}}h\partial_t v\rangle \right\rVert^2_{L^2(\R)}
+ \left\lVert (\lvert \gamma_f\rvert^2 - K^2) h^2\partial_t^2 v \right\rVert^2_{L^2(\R)}\\
&+ C_1\frac{h^2}{\e^2}\left\lVert h\partial_t v \right\rVert^2_{L^2(\R)}
 + C_2\frac{h^2}{\e^2}\left\lVert v \right\rVert^2_{L^2(\R)}
 + C_3\lVert h^2(\mathcal{L} - \Delta_{\p{x}})v \rVert^2_{L^2(\R)}.
\end{aligned}
\end{equation*}
Now, on the support of $\hat{w}_s(x_1,\xi)$ and hence on the support of $\hat{v}$ we have $\overline{F_s(\xi)}$ is an approximate solution of the equation
\begin{equation*}
(1+K^2)Z^2 -2(1+iK\xi_{n-1})Z + (1-\lvert \xi \rvert^2) = 0.
\end{equation*}
Hence,
\begin{equation*}
\left\lvert (1+K^2)\overline{F_s(\xi)}^2 -2(1+iK\xi_{n-1})\overline{F_s(\xi)} + (1-\lvert \xi \rvert^2) \right\rvert \leq C_\delta \lvert \overline{F_s(\xi)} \rvert,
\end{equation*}
where $C_{\mu}$ and $C_{\delta}$ are small when $\mu$ and $\delta$ are small.
Observe that
\begin{equation*}
C_3\left\lVert h^2(\mathcal{L} - \Delta_{\p{x}})v \right\rVert^2_{L^2(\R)}
\leq C_3\sum_{j,k=1}^{n}\left\lVert (g^{jk} - \delta^{jk})h^2\frac{\partial^2 v}{\partial x_j \partial x_k} \right \rVert^2_{L^2(\R)}
\leq C_{\delta} \lVert v \rVert^2_{H^2(\R)},
\end{equation*}
where $C_{\delta} \to 0$ as $\delta\to 0$. Here we use the assumption that the metric is close to Euclidean (see \eqref{step_10}).
Hence,
\begin{equation}\label{step_3}
\begin{aligned}
\lVert \hat{\g} \rVert^2_{L^2(\R)}
&\leq \left\lVert C_{\delta}\lvert F_s(\xi)\lvert\hat{v}(x_1,\xi) \right\rVert^2_{L^2(\R)} + (C_{\mu} + C_{\delta})\left\lVert v \right\rVert^2_{H^2(\R)}
+ C_1\frac{h^2}{\e^2}\left\lVert v \right\rVert^2_{H^1(\R)}\\
&\leq C^2_{\delta}\lVert v \rVert^2_{H^1(\R)} + (C_{\mu} + C_{\delta})\left\lVert v \right\rVert^2_{H^2(\R)}
+ C_1\frac{h^2}{\e^2}\left\lVert v \right\rVert^2_{H^1(\R)}.
\end{aligned}
\end{equation}
Notice that $C_{\mu}$, $C_{\delta}$ small whenever $\mu$ and $\delta$ is small respectively. Hence for small enough $h<\e$ we get
\begin{equation*}
\lVert \hat{\g} \rVert^2_{L^2(\R)} \leq (C_{\mu} + C_{\delta}) \left\lVert v \right\rVert^2_{H^2(\R)}.
\end{equation*}

To get an estimate of $\g$ in terms of $u$ recall that $u = \tilde{\mathcal{L}}_{\phi_{\e}} v$. Hence,
\begin{equation*}
\begin{aligned}
\lVert u \rVert^2_{L^2(\R)}
=& \lVert \tilde{\mathcal{L}}_{\phi_{\e}} v \rVert^2_{L^2(\R)}\\
\geq& \left\lVert \left[(1+K^2)h^2\partial^2_{x_1} -2(1+iK\xi_{n-1})h\partial_{x_1} + (1-\lvert \xi \rvert^2)\right] v(x_1,\xi) \right\rVert^2_{L^2(\R)}\\
&-\left\lVert \langle (\beta_f - Ke_{n-1}) ,h\nabla_{\p{x}}h\partial_t v\rangle \right\rVert^2_{L^2(\R)}
- \left\lVert (\lvert \gamma_f\rvert^2 - K^2) h^2\partial_t^2 v \right\rVert^2_{L^2(\R)}\\
&- C_1\frac{h^2}{\e^2}\left\lVert h\partial_t v \right\rVert^2_{L^2(\R)}
 - C_2\frac{h^2}{\e^2}\left\lVert v \right\rVert^2_{L^2(\R)} - C_{\delta}\lVert v \rVert^2_{H^2(\R)}.
\end{aligned}
\end{equation*}

Now, for $\lvert \xi \rvert < 1$,
\begin{equation*}
\begin{aligned}
\int_{0}^{\infty} &\left\lvert\left[(1+K^2)h^2\partial^2_{x_1} -2(1+iK\xi_{n-1})h\partial_{x_1} + (1-\lvert \xi \rvert^2)\right] \hat{v}(x_1,\xi)\right\rvert^2 \,dt \\
\simeq &\int_{0}^{\infty} \lvert (1-h^2\partial^2_{x_1} + \lvert\xi\rvert^2)  \hat{v}(x_1,\xi)\rvert^2 \,dt\\
&\simeq  \int_{0}^{\infty} \lvert \mathcal{F}((1-h^2\Delta_{x})v)(x_1,\xi)\rvert^2 \,dt
\simeq  \lVert v \rVert^2_{H^2(\R)}.
\end{aligned}
\end{equation*}
Hence taking $\mu$ and $\delta$ small enough, from \eqref{step_3} we get
\begin{equation*}
\begin{aligned}
\lVert \g \rVert^2_{L^2(\R)} &\leq \left(C_{\mu} + C_{\delta}\right)\lVert u\rVert^2_{L^2(\R)}
\leq \frac{1}{2}\lVert u\rVert^2_{L^2(\R)}.
\end{aligned}
\end{equation*}

Combining it with \eqref{step_4} and Lemma \ref{lemma for g} we see
\begin{equation*}
\begin{aligned}
\frac{h}{\sqrt{\e}} \lVert w_s \rVert_{L^2(\R)}
\leq &C \left\lVert J_s \tilde{\mathcal{L}}_{\phi_{\e}}(J_s^{-1}w_s) \right\rVert_{H^{-1}(\R)} + \mathcal{O}(h^{\infty})\lVert w \rVert_{L^2(\R)}\\
\leq &C \left\lVert \tilde{\mathcal{L}}_{\phi_{\e}}J_sJ_s^{-1}w_s \right\rVert_{H^{-1}(\R)} + hC\lVert J_s^{-1}w_s \rVert_{H^1(\R)} \\
&+ \mathcal{O}(h^{\infty})\lVert w \rVert_{L^2(\R)}\\
\leq &C \left\lVert \tilde{\mathcal{L}}_{\phi_{\e}}w_s \right\rVert_{H^{-1}(\R)} + hC\lVert w_s \rVert_{L^2(\R)} + \mathcal{O}(h^{\infty})\lVert w \rVert_{L^2(\R)}\\
\mbox{So, } \frac{h}{\sqrt{\e}} \lVert w_s \rVert_{L^2(\R)} \leq& C \left\lVert \tilde{\mathcal{L}}_{\phi_{\e}}w_s \right\rVert_{H^{-1}(\R)} +  \mathcal{O}(h^{\infty})\lVert w \rVert_{L^2(\R)}, \mbox{ for }\e>0\mbox{ small enogh}.
\end{aligned}
\end{equation*}

This completes the proof of Lemma: \eqref{small freq lemma}.
\qed

\subsection{Large frequency case (Lemma \ref{large freq lemma}):}
Following the approach of \cite{chung_2014} (section 5) let us consider the function
\begin{equation*}
F:\mathbb{R}^{n-1}\rightarrow\mathbb{C}, \qquad \text{defined as}
\end{equation*}
\begin{equation*}
\overline{F(\xi)} = \frac{1}{1+\lvert K \rvert^2}\left( 1+iK\xi_{n-1} + \sqrt{2iK\xi_{n-1} - (K\xi_{n-1})^2 + (1+\lvert K\rvert^2)\lvert \xi \rvert^2 - \lvert K \rvert^2 } \right),
\end{equation*}
but now consider the branch of square root so that it contains the non negative real axis.

Now, $F$ is smooth except where
\begin{equation*}
\tilde{F}(\xi) = 2iK\xi_{n-1} - (K\xi_{n-1})^2 + (1 + \lvert K \rvert^2)\lvert \xi \rvert^2 - \lvert K \rvert^2
\end{equation*}
belongs to the non positive real axis. That is when
\begin{equation*}
\xi_{n-1} = 0 \qquad and \quad \lvert \xi \rvert^2 \leq \frac{\lvert K \rvert^2}{1 + \lvert K \rvert^2},
\end{equation*}
So, on the support of $1-\rho(\xi)$, $F$ is smooth. Observe that now the real part of the square root is non negative, so, $\Re F$ and $\lvert F \rvert $ are bounded below by $\frac{1}{1 + K^2}$. Consider a smooth function $F_l(\xi)$ on $\mathbb{R}^{n-1}$ so that $F_l(\xi) = F(\xi)$ on support of $1-\rho(\xi)$ and $\Re F_l,\lvert F_l \rvert > \frac{1}{2 + 2K^2}$. For large $\xi$ we have $\Re F_l,\lvert F_l \rvert \simeq 1 + \lvert \xi\rvert$.

If, $\frac{K^2}{1+ K^2} < r_0 < r_1$ and $0<\delta_0 < \delta_1$, one can get $F_l = F$ and $F_l$ to be smooth for $\lvert \xi \rvert^2 \geq r_0$ and $\xi_{n-1} \geq \delta_0$.

For $u\in \mathcal{S}(\R)$ define the operator $J_l$ by
\begin{equation*}
\widehat{J_l u}(x_1,\xi) = \left( {F_l(\xi)} + h\partial_{x_1} \right)\hat{u}(x_1,\xi).
\end{equation*}
The adjoint of the above operator is defined as
\begin{equation*}
\widehat{J^*_l u}(x_1,\xi) = \left( {\overline{F_l(\xi)}} - h\partial_{x_1} \right)\hat{u}(x_1,\xi).
\end{equation*}
These operators have right inverses defined as
\begin{equation*}
\begin{aligned}
\widehat{J^{-1}_l}u(x_1,\xi) = h^{-1} \int_{0}^{x_1} \hat{u}(t,\xi)e^{\frac{t-x_1}{h}F_l(\xi)} \ dt,\\
\mbox{and}\quad
\widehat{{J^*_l}^{-1}}u(x_1,\xi) = h^{-1} \int_{x_1}^{\infty} \hat{u}(t,\xi)e^{\frac{x_1-t}{h}\overline{F_l(\xi)}} \ dt.
\end{aligned}
\end{equation*}
Each of the above is well defined functions in $\mathcal{S}(\R)$.

For the operators defined above, we have the following lemmas.
\begin{lemma}\label{boundedness lemma_1}
$J_l, J^*_l, J_l^{-1}, {J_l^*}^{-1}$ extends as bounded maps
\begin{equation*}
\begin{aligned}
J_l, J_l^* : H^1(\R) \rightarrow L^2(\R),
\quad \mbox{and} \quad
J_l^{-1}, {J_l^*}^{-1} : L^2(\R) \rightarrow H^1(\R).
\end{aligned}
\end{equation*}
Moreover, the extensions of $J^*_l$ and ${J^*_l}^{-1}$ are isomorphisms.
\end{lemma}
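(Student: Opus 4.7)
The plan is to follow the proof of Lemma~\ref{boundedness lemma} line by line, since the multiplier $F_l$ was constructed precisely so that it satisfies the two quantitative properties that were used for $F_s$: it is smooth on $\mathbb{R}^{n-1}$, one has $\Re F_l(\xi), |F_l(\xi)| \geq (2+2K^2)^{-1}$ uniformly in $\xi$, and both $\Re F_l$ and $|F_l|$ grow like $1+|\xi|$ as $|\xi|\to\infty$. These were the only features of $F_s$ used in Lemma~\ref{boundedness lemma}, so the entire argument transfers with $F_s$ replaced by $F_l$ throughout. In particular none of the steps use the specific choice of branch of the square root in the definition of $F$.

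First I would prove boundedness of $J_l$ and $J_l^*$ from $H^1(\R)$ to $L^2(\R)$ by passing to the semiclassical Fourier transform in the tangential variable $\p{x}$ and applying Plancherel: since $|F_l(\xi)| \leq C(1+|\xi|)$,
\begin{equation*}
\|J_l u\|_{L^2(\R)}^2 = \int_{\R} |F_l(\xi)\hat u(x_1,\xi) + h\partial_{x_1}\hat u(x_1,\xi)|^2\, dx_1\, d\xi \leq C\|u\|_{H^1_{scl}(\R)}^2,
\end{equation*}
and symmetrically for $J_l^*$.

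For $J_l^{-1}: L^2(\R)\to H^1(\R)$, I would fix $\xi$ and view $\widehat{J_l^{-1}u}(x_1,\xi)$ as the convolution on $[0,\infty)$ of $\hat u(\cdot,\xi)$ with the kernel $h^{-1}e^{-x_1\Re F_l(\xi)/h}\mathbf{1}_{[0,\infty)}(x_1)$, whose $L^1_{x_1}$ norm equals $(\Re F_l(\xi))^{-1}$. Young's inequality then gives
\begin{equation*}
\|\widehat{J_l^{-1}u}(\cdot,\xi)\|_{L^2_{x_1}} \leq \frac{1}{\Re F_l(\xi)}\|\hat u(\cdot,\xi)\|_{L^2_{x_1}}.
\end{equation*}
Since $\Re F_l \simeq 1+|\xi|$ at infinity and $\Re F_l \gtrsim 1$ uniformly, multiplication by $1+|\xi|$ is controlled, giving the tangential part of the $H^1_{scl}$ norm. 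The normal direction is then obtained from the identity $h\partial_{x_1}\widehat{J_l^{-1}u} = \hat u - F_l\widehat{J_l^{-1}u}$, since both terms on the right are already bounded in $L^2$. The same argument handles ${J_l^*}^{-1}$, with $F_l$ replaced by $\overline{F_l}$ and the integration running from $x_1$ to $\infty$.

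The isomorphism claim for $J_l^*$ and ${J_l^*}^{-1}$ requires a short additional observation. By construction $J_l^* \circ {J_l^*}^{-1}$ is the identity on $\mathcal{S}(\R)$, hence on $L^2(\R)$ by the density and boundedness already established. For the opposite composition one has to check that $J_l^*$ has trivial kernel on $H^1(\R)$: at each fixed $\xi$ the equation $(\overline{F_l(\xi)} - h\partial_{x_1})v = 0$ on $[0,\infty)$ has only the solutions $v(x_1) = c\, e^{\overline{F_l(\xi)}x_1/h}$, which fail to be in $L^2_{x_1}$ because $\Re F_l > 0$. The main point to track with care, and the only place where one must be slightly attentive, is that the lower bound $\Re F_l(\xi) \gtrsim 1+|\xi|$ really does hold uniformly in $\xi$ (not merely a uniform positive lower bound plus a large-$\xi$ asymptotic), so that the ratio $(1+|\xi|)/\Re F_l$ is bounded and the tangential $H^1_{scl}$ estimate closes. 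This is built into the construction of $F_l$ just before the lemma, after which everything reduces to verifying the elementary convolution and Plancherel bounds described above.
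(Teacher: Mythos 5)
Your proposal is correct and follows the same route as the paper, which simply observes that the lemma is proved exactly as for $J_s$ (with that lemma in turn quoted from \cite{chung_2014}); the only properties of $F_l$ that matter are the uniform lower bound $\Re F_l, |F_l| \geq (2+2K^2)^{-1}$ and the growth $\Re F_l, |F_l| \simeq 1+|\xi|$, and you correctly carry these through the Plancherel/Young argument and the kernel-triviality check for the isomorphism claim. One small remark: your final caution is a non-issue, since a uniform positive lower bound together with $\Re F_l \simeq 1+|\xi|$ for large $|\xi|$ already implies $\Re F_l \gtrsim 1+|\xi|$ uniformly (on any compact set $1+|\xi|$ is bounded above and $\Re F_l$ below), so the ratio $(1+|\xi|)/\Re F_l$ is automatically bounded.
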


\begin{lemma}\label{commutativity with 2nd order op_1}
\textbf{(A.)} Let $w \in \mathcal{S}(\R)$, if $Q$ is a second order semiclassical differential operator with bounded and smooth coefficients, then
\begin{equation*}
\lVert (J_lQ - QJ_l)w \rVert_{H^{-1}(\R)} \leq hC_{\delta}\lVert w \rVert_{H^1(\R)}.
\end{equation*}
\textbf{(B.)} Let $\chi \in \mathcal{S}(\R)$, then
\begin{equation*}
\Vert J_l\chi J_l^{-1} w \rVert_{L^2(\R)} \geq \lVert \chi w \rVert_{L^2(\R)} - hC_{\delta}\lVert w\rVert_{L^2(\R)}.
\end{equation*}
\end{lemma}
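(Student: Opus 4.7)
The overall plan is to imitate the proof of the small-frequency analogue, Lemma \ref{commutativity with 2nd order op} (which is a direct adaptation of \cite[Lemma 5.2]{chung_2014}), since $J_l$ has exactly the same form as $J_s$ with $F_s$ replaced by $F_l$. Both parts reduce to the fact that $F_l$ is a smooth symbol of order one on $\mathbb{R}^{n-1}$ all of whose $\xi$-derivatives are bounded (with constants $C_\delta$ depending on the choice of extension), combined with standard semiclassical pseudodifferential calculus.

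\textbf{Part (A).} First I would decompose $J_l = F_l(hD_{x'}) + h\partial_{x_1}$, which gives
\begin{equation*}
[J_l, Q] = [F_l(hD_{x'}), Q] + h[\partial_{x_1}, Q].
\end{equation*}
The second commutator is a second-order semiclassical differential operator whose coefficients are $\partial_{x_1}$-derivatives of the bounded smooth coefficients of $Q$; the prefactor $h$ makes it a bounded map $H^1(\R) \to H^{-1}(\R)$ of norm $hC$. For the first commutator, the semiclassical symbol expansion applied to the multiplier $F_l(hD_{x'})$ against the second-order operator $Q$ produces a leading term of the form $\frac{h}{i}\{F_l, q\}(x,hD)$, a second-order operator whose coefficients are controlled by the $C_\delta$ bounds on $\partial_\xi F_l$; this again maps $H^1 \to H^{-1}$ with norm $hC_\delta$. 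Summing gives (A).

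\textbf{Part (B).} Write
\begin{equation*}
J_l \chi J_l^{-1} = \chi + [J_l, \chi]\, J_l^{-1}.
\end{equation*}
The same splitting gives $[J_l, \chi] = [F_l(hD_{x'}), \chi] + h(\partial_{x_1}\chi)$. The second summand is multiplication by an $O(h)$ bounded function, hence an $O(h)$ operator on $L^2(\R)$. The first summand, by the semiclassical symbol calculus with $\chi$ Schwartz, is $h$ times a bounded zeroth-order operator on $L^2(\R)$. Composing with $J_l^{-1}:L^2(\R)\to H^1(\R)\hookrightarrow L^2(\R)$, which is bounded by Lemma \ref{boundedness lemma_1}, preserves the $hC_\delta$ bound. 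The reverse triangle inequality then yields (B).

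The main obstacle is verifying that $F_l$ genuinely admits a smooth extension to all of $\mathbb{R}^{n-1}$ satisfying uniform symbol estimates of the form $|\partial_\xi^\alpha F_l(\xi)| \leq C_\delta (1+|\xi|)^{1-|\alpha|}$. By construction $F_l$ must coincide with the explicit square-root expression $\overline{F}$ on $\{|\xi|^2 \geq r_0,\ |\xi_{n-1}| \geq \delta_0\}$ and still satisfy $\Re F_l,\,|F_l| \geq (2+2K^2)^{-1}$ everywhere; since the singular locus of $F$ lies near the boundary of this region, the derivative bounds on the extension depend on $\delta$, which accounts for the explicit $C_\delta$ in the statement. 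Once these symbol estimates are in hand, the rest is a direct application of the standard semiclassical commutator calculus, exactly as for $J_s$ in \cite{chung_2014}.
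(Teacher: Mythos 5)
Your proposal is correct and matches the paper's own route: the paper disposes of this lemma by stating that its proof is the same as that of the small-frequency analogue (Lemma \ref{commutativity with 2nd order op}), which itself is handled by a direct citation to \cite[Lemma~5.2]{chung_2014}. The decomposition $J_l = F_l(hD_{x'}) + h\partial_{x_1}$, the reduction of both parts to semiclassical commutator estimates governed by the $S^1$-type symbol bounds on the smooth extension $F_l$, and the observation that those bounds (hence the constant $C_\delta$) depend on the choice of extension near the singular locus of $F$, together constitute exactly the argument being cited.
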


\begin{lemma}\label{lemma for g_1}
For $u\in \mathcal{S}(\mathbb{R}^{n-1}_{1+})$, if $g$ is defined by
\begin{equation*}
\begin{aligned}
\hat{\g}(x_1,\xi) &= \frac{2\Re F_l(\xi)}{h}\int_{0}^{\infty} \hat{u}(t,\xi)e^{-\frac{1}{h}\left( t\overline{F_l(\xi)} + x_1F_l(\xi) \right)} dt\\
\mbox{then}\qquad
&\lVert J_l u \rVert_{H^{-1}(\R)} \simeq \lVert u - \g\rVert_{L^2(\R)}.
\end{aligned}
\end{equation*}
Here $\Re(z) \equiv \Re z$ means the real part of the complex number $z$.
\end{lemma}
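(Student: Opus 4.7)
The plan is to mirror the proof of Lemma \ref{lemma for g}, replacing the small-frequency symbol $F_s$ and its associated operator $J_s$ throughout by their large-frequency counterparts $F_l$ and $J_l$. The only ingredients needed for that argument to go through are the lower bound $\Re F_l(\xi) > \frac{1}{2+2|K|^2}$, the growth $|F_l(\xi)|, \Re F_l(\xi) \simeq 1+|\xi|$ for large $|\xi|$, and the mapping/isomorphism properties collected in Lemma \ref{boundedness lemma_1}. All of these are already in place by the construction of $F_l$ that preceded the statement.

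First I would verify that $\g \in L^2(\R)$ with $\|\g\|_{L^2(\R)} \leq C\|u\|_{L^2(\R)}$. This is a line-by-line repetition of the opening calculation in the proof of Lemma \ref{lemma for g}: apply Cauchy--Schwarz inside the $t$-integral defining $\hat\g(x_1,\xi)$, pull out the factor $2\Re F_l(\xi)/h$, and use $\int_0^\infty e^{-2t\Re F_l/h}\,dt = h/(2\Re F_l)$ in both the $t$- and $x_1$-variables; the two powers of $(2\Re F_l)/h$ cancel the two integrals, giving the claimed $L^2$ bound uniformly in $h$. Next, a direct Fourier-side computation shows $\widehat{J_l\g}(x_1,\xi)=(F_l(\xi)+h\partial_{x_1})\hat\g(x_1,\xi)=0$, so $J_l u = J_l(u-\g)$.

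For the upper bound, use the duality identity
\begin{equation*}
\|J_l u\|_{H^{-1}(\R)} = \sup_{w\in H^1_0(\R)\setminus\{0\}} \frac{|\langle J_l(u-\g),w\rangle|}{\|w\|_{H^1_0(\R)}} = \sup \frac{|\langle u-\g,J_l^* w\rangle|}{\|w\|_{H^1_0(\R)}}.
\end{equation*}
By Lemma \ref{boundedness lemma_1}, $J_l^*:H^1(\R)\to L^2(\R)$ is bounded, so Cauchy--Schwarz gives $\|J_l u\|_{H^{-1}(\R)} \leq C\|u-\g\|_{L^2(\R)}$.

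For the reverse inequality, set $w := (J_l^*)^{-1}(u-\g)$. The key point (and the only real thing to check) is that this $w$ actually lies in $H^1_0(\R)$, i.e. that its trace at $x_1=0$ vanishes. Evaluating the explicit formula for $(J_l^*)^{-1}$ gives
\begin{equation*}
\widehat{w}(0,\xi) = h^{-1}\int_0^\infty \widehat{(u-\g)}(t,\xi)\,e^{-t\overline{F_l(\xi)}/h}\,dt,
\end{equation*}
and substituting the definition of $\hat\g$ together with the identity $\int_0^\infty e^{-t(F_l+\overline{F_l})/h}\,dt = h/(2\Re F_l)$ shows that $\int_0^\infty \hat\g(t,\xi)e^{-t\overline{F_l}/h}\,dt = \int_0^\infty \hat u(t,\xi)e^{-t\overline{F_l}/h}\,dt$, so indeed $\widehat w(0,\xi)=0$. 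This is precisely the feature that motivated the definition of $\g$. Hence $w\in H^1_0(\R)$ with $\|w\|_{H^1_0(\R)} \leq C\|u-\g\|_{L^2(\R)}$ by Lemma \ref{boundedness lemma_1}, and plugging this $w$ into the supremum gives $\|J_l u\|_{H^{-1}(\R)} \geq C\|u-\g\|_{L^2(\R)}$.

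The only step that is at all delicate is the trace computation showing $\widehat w(0,\xi)=0$, because this is what encodes the compatibility between the specific branch of $F_l$ chosen in the large-frequency regime and the boundary condition underlying the $H^1_0$ space. Once this is verified, every other step is a mechanical transcription of the small-frequency argument, since the positivity $\Re F_l > 0$ is all that was used to control the exponentials in those estimates.
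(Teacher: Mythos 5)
Your proof is correct and follows exactly the route the paper prescribes, namely transcribing the proof of Lemma~\ref{lemma for g} with $F_s$, $J_s$ replaced by $F_l$, $J_l$, using $\Re F_l > \frac{1}{2+2|K|^2}$ and Lemma~\ref{boundedness lemma_1} in place of the corresponding small-frequency facts. You in fact supply the one detail the paper leaves implicit — the computation that $\widehat{(J_l^*)^{-1}(u-\g)}(0,\xi)=0$, which justifies $w\in H^1_0(\R)$ — so your version is, if anything, more complete than the paper's one-line remark that the two proofs are identical.
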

The proofs of these lemmas are same as the proof of the corresponding lemmas for the small frequency operator $J_s$.

Now, by the Carleman estimate \eqref{initial carleman est on omega tilde} and the similar arguments used in the previous subsection, we have
\begin{equation*}
\frac{h}{\sqrt{\e}} \lVert w_l \rVert_{L^2(\R)} \leq C \lVert \tilde{\mathcal{L}}_{\phi_{\e}}(J_l^{-1}w_l) \rVert_{L^2(\R)} + \mathcal{O}(h^{\infty})\lVert w \rVert_{L^2(\R)}.
\end{equation*}
And now again we want to combine the above equation with Lemma \eqref{lemma for g_1} to get
\begin{equation*}
\frac{h}{\sqrt{\e}} \lVert w_l \rVert_{L^2(\R)} \leq C \lVert J_l \tilde{\mathcal{L}}_{\phi_{\e}}(J_l^{-1}w_l) \rVert_{H^{-1}(\R)} + \mathcal{O}(h^{\infty})\lVert w \rVert_{L^2(\R)}.
\end{equation*}
For this we need that the function $\g$, defined in Lemma \eqref{lemma for g_1} for $u = \tilde{\mathcal{L}}_{\phi_{\e}}(J_l^{-1}w_l)$, to satisfy the following estimate
\begin{equation*}
\lVert \g \rVert_{L^2(\R)} \leq \frac{1}{2}\lVert u\rVert_{L^2(\R)} + \mathcal{O}(h)\lVert w_l\rVert_{L^2(\R)}.
\end{equation*}
Here we cannot proceed by the arguments used in the last section for small frequency instead we factorize the operator $\tilde{\mathcal{L}}_{\phi_{\e}}$.
Let $\zeta(\xi)$ be a smooth cutoff function defined as
\begin{equation*}
\begin{aligned}
\zeta(\xi) =
	\begin{cases} 	1, 	& \mbox{if } \lvert \xi \rvert^2 \geq r_1 \mbox{ or } \xi_{n-1} \geq \delta_1\\
					0, 	& \mbox{if } \lvert \xi \rvert^2 \leq r_0 \mbox{ and } \xi_{n-1} \leq \delta_0.
	\end{cases}
\end{aligned}
\end{equation*}

Let $G_{s} = (1-\zeta(\xi))F_l(\xi)$ and consider the following symbol
\begin{equation*}
G_{\pm} = \zeta(\xi)\frac{\alpha + i\beta_{f}.\xi \pm \sqrt{(\alpha + i\beta_{f}.\xi)^2 - (1+\lvert\gamma_{f}\rvert^2)(\alpha^2 - L(\p{x},\xi))}}{1+ \lvert \gamma_f\rvert^2} + G_s(\xi),
\end{equation*}
where $L(\p{x},\xi)$ is the semiclassical symbol of the second order operator $\mathcal{L}$.

The branch of the square root is with non-negative real part. Note that $G_{\pm}$ is discontinuous if $(\alpha + i\beta_{f}.\xi)^2 - (1+\lvert\gamma_{f}\rvert^2)(\alpha^2 - \lvert\xi\rvert^2)$ lies on non-positive real axis, that is when
\begin{align}\label{step_5}
\langle \beta_f , \xi \rangle = 0  \qquad and \qquad L(\p{x},\xi)\leq \frac{\alpha^2\lvert\gamma_f\rvert^2}{1 + \lvert \gamma_f\rvert^2}.
\end{align}
Now, for $\mu$ small enough we have $\beta_f \simeq Ke_{n-1}$, $\lvert\gamma_f\rvert \simeq K$, for $\delta>0$ small enough $L(\p{x},\xi) \simeq \lvert \xi \rvert^2$ and for $h$ small enough we have $\alpha \simeq 1$, hence, \eqref{step_5} cannot happen on the support of $\zeta$. So, $G_{\pm}$ is smooth on the support of $\zeta$ and are symbols of order 1 on $\mathbb{R}^{n-1}$.

Denote $T_{a}$ as the operator corresponding to symbol $a$. Then
\begin{equation*}
\begin{aligned}
&(h\partial_{x_1} - T_{G_{+}})(1+ \lvert \gamma_f\rvert^2) (h\partial_{x_1} - T_{G_{-}}) \\
=& 	(1+ \lvert \gamma_f\rvert^2)h^2\partial_{x_1}^2
	-(\alpha + \beta_f.h\nabla_{\p{x}} )h\partial_{x_1}T_{\zeta}
	+(\alpha^2 + h^2\mathcal{L} )T_{\zeta^2}\\
&	+(1+ \lvert \gamma_f\rvert^2)T_{G_{s}}
	+(1+ \lvert \gamma_f\rvert^2)(T_{G_{+}G_{-}} + T_{G_{-}G_{+}} + T_{G_{s}G_{s}} )T_{G_{s}}
	+hE_1,
\end{aligned}
\end{equation*}
where $E_1$ is a semiclassical pseudodifferential operator consisting first order operators in $\mathbb{R}^{n-1}$ and $\partial_{x_1}$ which is bounded from $H^1(\R)$ to $L^2(\R)$.
Let us take $v= {J_l}^{-1}w_l$.
Observe that for fixed $x_1$, as a function of $\xi$, $\hat{w}_l(x_1,\xi)$ is supported in support of $(1-\rho(\xi))$. Therefore $T_{\zeta}v = v$ and since $\zeta \equiv 1$ in that set so, $T_{\zeta^2}v = v$. Moreover as $G_s \equiv 0$ on the support of $1-\rho(\xi)$, so, $T_{G_s}v = 0$. Hence,
\begin{equation*}
\begin{aligned}
&	(h\partial_{x_1} - T_{G_{+}})(1+ \lvert \gamma_f\rvert^2) (h\partial_{x_1} - T_{G_{-}})v\\
=&	(1+ \lvert \gamma_f\rvert^2)h^2\partial_{x_1}^2v
	-(\alpha + \langle\beta_f,h\nabla_{\p{x}}\rangle )h\partial_{x_1}v
	+(\alpha^2 + h^2\mathcal{L} )v
	+hE_1v	\\
=&	\tilde{\mathcal{L}}_{\phi_{\e}}v + hE_1v.
\end{aligned}
\end{equation*}

Let us write $z=(1+\lvert \gamma_f\rvert^2)(h\partial_{x_1} - T_{G_{-}} )v$, then $\tilde{\mathcal{L}}_{\phi_{\e}}v = (h\partial_{x_1} - T_{G_{+}})z - hE_1 v$.

Now, let us calculate $\hat{\g}(x_1,\xi)$ corresponding to $u = \tilde{\mathcal{L}}_{\phi_{\e}}(J_l^{-1}w_l)$.
\begin{equation*}
\begin{aligned}
\hat{\g}(x_1,\xi)
=& 	\frac{2\Re F_l(\xi)}{h}\int_{0}^{\infty}	\widehat{\tilde{\mathcal{L}}_{\phi_{\e}}v}(t,\xi)  e^{-\frac{1}{h}(t\overline{F_l(\xi)} + x_1F_l(\xi)} 	\ dt	\\
=& 	\frac{2\Re F_l(\xi)}{h}\int_{0}^{\infty}	\mathcal{F}\left([h\partial_t - T_{G_{+}}]z\right)(t,\xi)  e^{-\frac{1}{h}(t\overline{F_l(\xi)} + x_1F_l(\xi))} 	\ dt	\\
& 	-\frac{2\Re F_l(\xi)}{h}\int_{0}^{\infty}	h\widehat{E_1v}(t,\xi)  e^{-\frac{1}{h}(t\overline{F_l(\xi)} + x_1F_l(\xi))} 	\ dt	\\
=& 	\frac{2\Re F_l(\xi)}{h}\int_{0}^{\infty}	\mathcal{F}\left([T_{\overline{F_l}} - T_{G_{+}}]z\right)(t,\xi)  e^{-\frac{1}{h}(t\overline{F_l(\xi)} + x_1F_l(\xi))} 	\ dt	\\
& 	-\frac{2\Re F_l(\xi)}{h}\int_{0}^{\infty}	h\widehat{E_1v}(t,\xi)  e^{-\frac{1}{h}(t\overline{F_l(\xi)} + x_1F_l(\xi))} 	\ dt	\\
\end{aligned}
\end{equation*}

Hence, by the same calculation used in the proof of Lemma \eqref{lemma for g} we have
\begin{equation*}
\begin{aligned}
\lVert \g \rVert^2_{L^2(\R)} \leq  \left\lVert T_{\overline{F_l} - G_{+}} z \right\rVert^2_{L^2(\R)} + h^2\lVert E_1 v \rVert^2_{L^2(\R)}.
\end{aligned}
\end{equation*}

Now, to estimate $\left\lVert T_{\overline{F_l} - G_{+}} z \right\rVert^2_{L^2(\R)}$ let us calculate the term $(\overline{F_l} - G_{+})$.
We get
\begin{equation}\label{step_9}
\begin{aligned}
\left\lVert T_{\overline{F_l} - G_{+}}z\right\rVert^2_{L^2(\R)}
\leq C_{\mu,\delta}\lVert z \rVert^2_{H^1(\R)},
\end{aligned}
\end{equation}
for $h$, $\mu$ and $\delta$ small enough. Here the constant $C_{\mu,\delta}$ are small if $\mu,\delta$ is small.
And hence we get,
\begin{equation*}
\begin{aligned}
\lVert \g\rVert^2_{L^2(\R)}
&\leq  C_{\mu,\delta}\lVert z\rVert^2_{H^1(\R)} + h^2\lVert E_1v\rVert^2_{L^2(\R)}\\
&\leq  C_{\mu,\delta}\lVert z\rVert^2_{H^1(\R)} + h^2\lVert v\rVert^2_{H^1(\R)}.
\end{aligned}
\end{equation*}

As $\tilde{\mathcal{L}}_{\phi_{\e}} v = (h\partial_{x_1} - T_{G_{+}})z - hE_1v$, we get
\begin{equation*}
\begin{aligned}
		\lVert\tilde{\mathcal{L}}_{\phi_{\e}} v \rVert^2_{L^2(\R)}
\geq& 	\lVert(h\partial_{x_1} - T_{G_{+}})z \rVert^2_{L^2(\R)} - h^2\lVert E_1v\rVert^2_{L^2(\R)}\\
\geq& 	\lVert J_l^*z \rVert^2_{L^2(\R)} - \lVert T_{\overline{F_l} - G_{+}}z \rVert^2_{L^2(\R)} - h^2\lVert E_1v\rVert^2_{L^2(\R)}\\
\geq& 	C\left(\lVert z \rVert^2_{H^1(\R)} - C_{\mu,\delta}\lVert z \rVert^2_{H^1(\R)} - h^2\lVert v\rVert^2_{H^1(\R)}\right)\\
\geq&	C\lVert z \rVert^2_{H^1(\R)} - Ch^2\lVert v\rVert^2_{H^1(\R)}.
\end{aligned}
\end{equation*}

Hence,
\begin{equation*}
\begin{aligned}
		\lVert \g\rVert^2_{L^2(\R)}
\leq& 	C_{\mu,\delta} \lVert \tilde{\mathcal{L}}_{\phi_{\e}} v \rVert^2_{L^2(\R)}
		+ h^2\lVert v\rVert^2_{H^1(\R)}
\leq& \frac{1}{2}\lVert \tilde{\mathcal{L}}_{\phi_{\e}} v \rVert^2_{L^2(\R)} + h^2\lVert w_l\rVert^2_{L^2(\R)}.
\end{aligned}
\end{equation*}

Now using the Lemmas \eqref{boundedness lemma_1},\eqref{commutativity with 2nd order op_1} and \eqref{lemma for g_1} we get that
\begin{equation*}
\begin{aligned}
\frac{h}{\sqrt{\e}} \lVert w_l \rVert^2_{L^2(\R)} \leq C\lVert J_l \tilde{\mathcal{L}}_{\phi_{\e}} \chi_2 J_l^{-1}w_l \rVert_{H^{-1}(\R)} + \mathcal{O}(h^{\infty})\lVert w \rVert_{L^2(\R)} .
\end{aligned}
\end{equation*}
Using the same techniques in small frequency case, we can get
\begin{equation*}
\frac{h}{\sqrt{\e}} \lVert w_l \rVert^2_{L^2(\R)} \leq C\lVert \tilde{\mathcal{L}}_{\phi_{\e}} w_l \rVert_{H^{-1}(\R)} + \mathcal{O}(h^{\infty})\lVert w \rVert_{L^2(\R)} .
\end{equation*}
Hence, the proof of Lemma \eqref{large freq lemma} is complete.
\qed

\subsection{Estimate on admissible manifolds}
In the last subsection we finished the proof of Proposition \ref{Target_Proposition}. Using the corollary of the same proposition we had Estimate \eqref{Target_estimate}.
Now our aim is to prove the Carleman estimate \eqref{Int_Car_Est} using the Estimate \eqref{Target_estimate}.

Recall our assumptions:
\begin{enumerate}
\item $M \subset \mathbb{R}^n$
\item The metric $\lvert g^{jk}-\delta^{jk} \rvert < \delta$.
\item $E$ can be parameterized by a smooth real valued function $f$ so that
\begin{equation*}
E \subset \{ (x_1,\p{x}): x_1=f(\p{x}), \p{x} \in \mathbb{R}^{n-1} \}.
\end{equation*}
\item The function $f$ is such that $\lvert \nabla_{g_0} f(\p{x}) - Ke_{n-1}\rvert \leq \mu$ for some small $\mu$.
\end{enumerate}
First to remove assumption 4 observe that one can cover $\Omega$ by finitely many open sets $U_{j}$ such that for some appropriate coordinate system in each $U_j$ there is $K_j$ so that
\begin{equation*}
\lvert \nabla_{g_0} f(\p{x}) - K_je_{n-1}\rvert_{g_0} \leq \frac{\mu}{2},\quad \mbox{for }\mu>0\mbox{ small}.
\end{equation*}
Now, there exist $\delta_0>0$ such that if $0<\delta<\delta_0$, then
$\lvert \nabla_{g_0} f(\p{x}) - K_je_{n-1}\rvert \leq \mu$.

Let $\chi_1, \chi_2, \dots \chi_m$ be a partition of unity subordinate to the cover $U_1, U_2, \dots U_m$. For $w \in C^{\infty}_{c}({M})$,
\begin{equation*}
w = \sum_{i=1}^{m}\chi_iw = \sum_{i=1}^{m}w_i,
\end{equation*}
where $w_i \in C^{\infty}_{c}({M}\cap U_i)$. Apply \eqref{Target_estimate} in ${M}\cap U_i$ for $w_i$,
\begin{equation*}
\frac{h}{\sqrt{\e}}\lVert w_j \rVert_{L^2({M}\cap U_i)}
\leq C\lVert {\mathcal{L}}_{\phi_{\e}} w_i \rVert_{H^{-1}(A_0)},\quad \forall w\in C^{\infty}_c({M} \cap U_i).
\end{equation*}

Then for $w\in C_c^{\infty}({M})$,
\begin{equation*}
\begin{aligned}
\frac{h}{\sqrt{\e}}\lVert w \rVert_{L^2({M})}
&	\leq \sum_{i=1}^{m} \frac{h}{\sqrt{\e}}\lVert w_i \rVert_{L^2({M}\cap U_i)}
\leq C\sum_{i=1}^{m}\lVert {\mathcal{L}}_{\phi_{\e}} \chi_i w \rVert_{H^{-1}(A_0)}\\
&\leq C\sum_{i=1}^{m}\lVert \chi_i{\mathcal{L}}_{\phi_{\e}} w \rVert_{H^{-1}(A_0)}
		+Ch\sum_{i=1}^{m}\lVert w \rVert_{L^2(M)}\\
\mbox{So,}\quad \frac{h}{\sqrt{\e}}\lVert w \rVert_{L^2({M})}
&	\leq C\lVert {\mathcal{L}}_{\phi_{\e}} w \rVert_{H^{-1}(A_0)}.
\end{aligned}
\end{equation*}

Hence, we have for $w\in C_c^{\infty}({M})$ and for any smooth function $f$
\begin{equation*}
\frac{h}{\sqrt{\e}}\lVert w \rVert_{L^2({M})}
\leq C\lVert {\mathcal{L}}_{\phi_{\e}} w \rVert_{H^{-1}(A_0)}.
\end{equation*}

Now, to remove assumption 3, let $M$ is covered by open sets $V_1,V_2,\dots,V_m$ such that each $V_j \cap E$ can be viewed as a graph of a smooth function $f_j$. Let $A_j$ denotes the set $\{(x_1,\p{x}) : x_1 \geq f_j(\p{x}) \}$ containing $\Omega$. Consider a cutoff function $\chi_j \in C^{\infty}(A_j)$ so that $\chi_j \equiv 1$ on $V_j$ and $\chi_j \equiv 0$ on $A_j\setminus \Omega$.
Hence for $w \in C^{\infty}_c(M)$, we get
\begin{equation*}
\begin{aligned}
&\frac{h}{\sqrt{\e}}\lVert \chi_jw \rVert_{L^2(M)}
\leq C\lVert {\mathcal{L}}_{\phi_{\e}} \chi_jw \rVert_{H^{-1}(A_j)}\\
\implies \quad
&\frac{h}{\sqrt{\e}}\lVert w \rVert_{L^2(M)}
\leq C\sum_{i=1}^{m}\lVert \chi_i{\mathcal{L}}_{\phi_{\e}} w \rVert_{H^{-1}(A_j)}
\end{aligned}
\end{equation*}

Now multiplying by $\chi_j$ is a bounded linear operator from $H^1_0(A_j)$ to $H^1_0(\Omega)$. Hence by duality we get it to be a bounded linear map from $H^{-1}(\Omega)$ to $H^{-1}(A_j)$.
Hence, for small enough $0<h<\e$,
\begin{equation}\label{step_0}
\frac{h}{\sqrt{\e}}\lVert w \rVert_{L^2(M)}
\leq C\lVert {\mathcal{L}}_{\phi_{\e}} w \rVert_{H^{-1}(\Omega)}, \quad \forall w \in C^{\infty}_c(M).
\end{equation}
We have the above estimate true for any compact domain $M\subset \Omega \subset \mathbb{R}^n$ with $E \subset (\partial M \setminus \Gamma_{D}) \cap \partial \Omega$.

Now, to deal with assumption 2 we notice that one can find a coordinate chart near any point $\p{p} \in M_0$ so that the metric $g_0^{jk}$ is $\delta^{jk}$ at $p_0$. Hence we can find a neighborhood $\p{U}_{\p{p}}$ of $\p{p}$ and a coordinate map $\p{\psi}_{\p{p}}$ on $\p{U}_{\p{p}}$ so that in this coordinate chart $\lvert g_0^{jk} - \delta^{jk} \rvert < \delta$.
Define the map $\psi_p(x_1,\p{x}) = (x_1,\p{\psi}_{\p{x}})$ on $U_p = (\mathbb{R} \times \p{U}_{\p{p}}) \cap \Omega $.

Using the fact that $\Omega$ is compact subset of $\mathbb{R}_{+} \times M_0$ we will find a finite number partition of unity $(U_i,\chi_i)_{i=1,\dots,n}$ on $\Omega$ such that on each $U_i$ we have $\lvert g^{jk} - \delta^{jk}\rvert \leq \delta$.
Hence, using the Estimate \eqref{step_0} we get
\begin{equation*}
\frac{h}{\sqrt{\e}}\lVert \chi_iu \rVert_{L^2(U_i)}
\leq C\lVert {\mathcal{L}}_{\phi_{\e}} (\chi_i u) \rVert_{H^{-1}(\Omega)}.
\end{equation*}

Note that here the operator $\mathcal{L}_{\phi_{\e}}$ remains unchanged as $\Delta_g$ is defined in a coordinate independent way and the function $\phi_{\e}$ depends on $x_1$ variable only. Hence, for $i=1,\dots,n$ we have
\begin{equation*}
\frac{h}{\sqrt{\e}}\lVert \chi_iu \rVert_{L^2(U_i)}
\leq C\lVert {\mathcal{L}}_{\phi_{\e}} (\chi_i u) \rVert_{H^{-1}(\Omega)}.
\end{equation*}

Hence for $w \in C^{\infty}_c(M)$,
\begin{equation}\label{step_7}
\begin{aligned}
\frac{h}{\sqrt{\e}}\lVert u \rVert_{L^2(M)}
\leq \frac{h}{\sqrt{\e}}\sum_{i=1}^{n}\lVert \chi_iu \rVert_{L^2(U_i)}
\leq &C\sum_{i=1}^{n} \lVert {\mathcal{L}}_{\phi_{\e}} (\chi_i u) \rVert_{H^{-1}(\Omega)}\\
\leq &C\sum_{i=1}^{n} \left(\lVert{\chi_i\mathcal{L}}_{\phi_{\e}} u \rVert_{H^{-1}(\Omega)}
			+ ch\lVert u \rVert_{L^{2}(M)} \right)\\
\end{aligned}
\end{equation}

Hence for $\e$ and $h$ small enough and $u \in C^{\infty}_c(M)$ we get
\begin{equation}\label{step_8}
\frac{h}{\sqrt{\e}}\lVert u \rVert_{L^2(M)}
\leq \lVert{\mathcal{L}}_{\phi_{\e}} u \rVert_{H^{-1}(\Omega)}.
\end{equation}

Till now we have worked on $M \subset \Omega \subset \mathbb{R}^n$ and in the last estimate above we have successfully removed the assumption 2.

Now removing assumption 1 follows from the fact that $M_0$ is simple. Hence, there is a diffeomorphism $\psi_1 : M_0 \to \overline{B(0,1)} \subset \mathbb{R}^{n-1}$, where $B(0,1) = \{\p{x} \in \mathbb{R}^{n-1} : \lvert \p{x} \rvert < 1 \}$. Now define the map
\begin{equation*}
\psi : \mathbb{R}_{+} \times M_0 \to \R, \qquad \mbox{defined as }\quad \psi(x_1,\p{x}) := (x_1, \psi_1(\p{x})).
\end{equation*}

Let $u\circ \psi^{-1} \in C^{\infty}_{c}(\psi(M))$.
Observe that a calculation shows that
\begin{equation*}
\lVert u \rVert_{L^2(M)} \simeq \lVert u\circ \psi_p^{-1} \rVert_{L^2(\psi(M))}, \quad \mbox{as well as} \quad \lVert u \rVert_{H^1(\Omega)} \simeq \lVert u \circ \psi^{-1} \rVert_{H^1(\psi(\Omega))}.
\end{equation*}
Using duality one can prove $\lVert u \rVert_{H^{-1}(\Omega)} \simeq \lVert u \circ \psi^{-1} \rVert_{H^{-1}(\psi(\Omega))}$.

Let us denote the coordinates in $\psi(\Omega)$ as $y=(y_1,\p{y})$ and the elements in $\Omega$ are $x = (x_1,\p{x}) \in \mathbb{R} \times M_0$. Having this notation, for $u\in C^{\infty}_{c}(M)$, we get
\begin{equation*}
\begin{aligned}
\mathcal{L}_{\phi_{\e}} u(x)
=& e^{\phi_{\e}/h}h^2\Delta_ge^{-\phi_{\e}/h} u(\psi^{-1}(y))\\
=& e^{\phi_{\e}/h}h^2\Delta_{\tilde{g}}e^{-\phi_{\e}/h} \left(u \circ \psi^{-1}\right)(y)
+ hE_1\left(u \circ \psi^{-1}\right)(y),
\end{aligned}
\end{equation*}
where $\tilde{g}^{jk}(y) = \frac{\partial \psi^{-1}_j}{\partial y_{\alpha}}(y) g^{\alpha\beta}(\psi^{-1}(y))\frac{\partial \psi^{-1}_k}{\partial y_{\beta}}(y)$ is the metric defined on $\psi(\Omega)$ and $E_1$ is a first order semiclassical differential operator on $\psi(\Omega)$. From \eqref{step_8} we have
\begin{equation*}
\frac{h}{\sqrt{\e}} \lVert v \rVert_{L^2(\psi(M))} \leq C \lVert e^{\phi_{\e}/h}h^2\Delta_{\tilde{g}}e^{-\phi_{\e}/h} v \rVert_{H^{-1}(\psi(\Omega))}, \quad \forall v \in C_{c}^{\infty}(\psi(M)).
\end{equation*}

Writing $v=u\circ\psi^{-1}$ and using triangle inequality along with the equivalence of norms under the change of coordinate $\psi$, we get
\begin{equation*}
\begin{aligned}
&\frac{h}{\sqrt{\e}} \lVert u \rVert_{L^{2}(M)}
\leq C \lVert \mathcal{L}_{\phi_{\e}} u \rVert_{H^{-1}(\Omega)}
+ h\lVert E_1 u \rVert_{H^{-1}(\psi(\Omega))},
\quad \forall u \in C_{c}^{\infty}(M)\\
&\implies
\frac{h}{\sqrt{\e}} \lVert u \rVert_{L^{2}(M)}
\leq C \lVert \mathcal{L}_{\phi_{\e}} u \rVert_{H^{-1}(\Omega)}
+ h\lVert u \rVert_{L^2(\Omega)},
\quad \forall u \in C_{c}^{\infty}(M)\\
&\implies
\frac{h}{\sqrt{\e}} \lVert u \rVert_{L^{2}(M)}
\leq C \lVert \mathcal{L}_{\phi_{\e}} u \rVert_{H^{-1}(\Omega)}, \quad \mbox{for small }\e>0, \quad \mbox{and } \forall u \in C_{c}^{\infty}(M).
\end{aligned}
\end{equation*}

Hence we get the following estimate
\begin{equation}\label{step_1}
\frac{h}{\sqrt{\e}} \lVert u \rVert_{L^{2}(M)}
\leq C \lVert \mathcal{L}_{\phi_{\e}} u \rVert_{H^{-1}(\Omega)}, \quad \mbox{for small }\e>0, \quad \mbox{and } \forall u \in C_{c}^{\infty}(M).
\end{equation}
Hence we have successfully removed the assumptions 1, 2, 3 and 4 stated above.
Now following the same steps in the proof of the boundary Carleman estimate to add the lower order terms in the right hand side and replace $\phi_{\e}$ by $\phi$, we get
\begin{equation}\label{final interior carleman estimate}
\begin{aligned}
h\lVert u \rVert_{L^2(M)}
\leq C\lVert {\mathcal{L}}_{\phi,B,q} u \rVert_{H^{-1}(\Omega)}, \quad \forall u \in C^{\infty}_c(M).
\end{aligned}
\end{equation}

\section{Construction of the solution}\label{CGO}
In this section, we construct suitable solutions of the  operator $\mathcal{L}_{\phi,B,q}$ .
But at first we state the following proposition which can be proved by using Hahn-Banach extension and Riesz representation theorem. We will skip the proof here, for details see \cite[Proposition 4.4]{F_K_S_U}.

\begin{proposition}\label{existance of r}
Let $\phi(x)=x_1$ and $\mathcal{L}_{\phi,B,q}$ is as before.
For any $v \in L^2(M)$, there exists $u\in H^1(M)$ such that
\begin{equation*}
\begin{aligned}
\mathcal{L}^*_{\phi,B,q} u &= v \quad on \quad M\\
u|_{E} &= 0
\end{aligned}
\end{equation*}
\end{proposition}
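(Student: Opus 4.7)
The plan is to run the standard Hahn--Banach/Riesz duality argument for existence, with the interior Carleman estimate \eqref{final interior carleman estimate} playing the role of the a priori bound. Since the section has set things up so that $M \subset \Omega$ and $E \subset \partial \Omega$, the prescribed vanishing on $E$ will be obtained by producing a solution in $H^1_0(\Omega)$ and restricting to $M$.

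First I would introduce the subspace
\[
\mathcal{D} \;=\; \{\mathcal{L}_{\phi,B,q}\,w \,:\, w \in C_c^\infty(M)\} \;\subset\; H^{-1}(\Omega),
\]
and on it the antilinear functional $L(\mathcal{L}_{\phi,B,q} w) := (w,v)_{L^2(M)}$. The estimate \eqref{final interior carleman estimate} guarantees that $\mathcal{L}_{\phi,B,q}$ is injective on $C_c^\infty(M)$, so $L$ is well defined, and moreover
\[
|L(\mathcal{L}_{\phi,B,q} w)| \;\le\; \|w\|_{L^2(M)} \|v\|_{L^2(M)} \;\le\; \frac{C}{h}\,\|v\|_{L^2(M)}\,\|\mathcal{L}_{\phi,B,q} w\|_{H^{-1}(\Omega)},
\]
so $L$ is bounded on $\mathcal{D}$ with respect to the $H^{-1}(\Omega)$ norm.

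Next I would apply Hahn--Banach to extend $L$ to a bounded antilinear functional on all of $H^{-1}(\Omega)$. Using the identification $H^{-1}(\Omega) = (H^1_0(\Omega))^\ast$ together with Riesz representation, the extended functional is represented by some $\tilde u \in H^1_0(\Omega)$, i.e.
\[
L(f) \;=\; \langle f,\tilde u\rangle_{H^{-1}(\Omega),\,H^1_0(\Omega)}, \qquad f\in H^{-1}(\Omega).
\]
Specialising to $f = \mathcal{L}_{\phi,B,q} w$ with $w \in C_c^\infty(M)$ yields
\[
(w,v)_{L^2(M)} \;=\; \langle \mathcal{L}_{\phi,B,q} w,\tilde u\rangle \;=\; \langle w,\mathcal{L}_{\phi,B,q}^{\ast}\tilde u\rangle,
\]
which is exactly the statement $\mathcal{L}_{\phi,B,q}^{\ast} \tilde u = v$ in $\mathcal{D}'(M)$. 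Setting $u := \tilde u|_M \in H^1(M)$, elliptic regularity or simply the definition shows this identity holds in $M$.

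Finally, the boundary condition comes for free: since $\tilde u \in H^1_0(\Omega)$ its trace on $\partial\Omega$ vanishes, and because $E \subset \partial M \cap \partial\Omega$ the trace of $u$ on $E$ agrees with the trace of $\tilde u$ on that part of $\partial\Omega$, hence $u|_E = 0$. The main subtlety in this argument is really contained in the Carleman estimate already proved, namely getting a bound in the $H^{-1}(\Omega)$ norm (rather than the more standard $L^2$ norm) so that the duality with $H^1_0(\Omega)$ yields a solution with only $H^1$ regularity; everything else is a direct application of Hahn--Banach and Riesz representation, and this is why the proof is omitted with a pointer to \cite[Proposition 4.4]{F_K_S_U}.
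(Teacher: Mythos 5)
Your proof is correct and follows the same Hahn--Banach/Riesz duality argument the paper alludes to by its reference to \cite[Proposition 4.4]{F_K_S_U}: use the interior Carleman estimate in the $H^{-1}(\Omega)$ norm to define and bound a functional on the range of $\mathcal{L}_{\phi,B,q}$, extend it, represent it by $\tilde u \in H^1_0(\Omega)$, and restrict to $M$, with the vanishing on $E$ falling out of $E \subset \partial M \cap \partial\Omega$. The only omission is that you do not explicitly record the norm bound $h\lVert u\rVert_{H^1(M)} \leq C\lVert v \rVert_{L^2(M)}$ which is part of the proposition, but it follows immediately from the norm of the Hahn--Banach extension together with the isometric identification $(H^{-1}(\Omega))^* = H^1_0(\Omega)$, so this is cosmetic; the appeal to elliptic regularity in the last step is also unnecessary since the distributional identity on $\operatorname{int}(M)$ is already what is asserted.
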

\begin{equation*}
\mbox{and} \qquad h\lVert u\rVert_{H^1(M)} \leq C\lVert v \rVert_{L^2(M)}.
\end{equation*}
Here $\mathcal{L}^*_{\phi,B,q}$ is the $L^2$ adjoint of $\mathcal{L}_{\phi,B,q}$.

\begin{remark}\label{existance}
Note that as the Carleman estimate \eqref{final interior carleman estimate} is true for $\phi(x) = x_1$ and the operator $\mathcal{L}^*_{\phi,B,q}$ is of the form
\begin{equation*}
\mathcal{L}^*_{\phi,B,q} = e^{-\phi/h}\mathcal{L}^*_{B,q}e^{\phi/h}.
\end{equation*}
Hence the above proposition guarantees a solution $u$ of the equation
\begin{equation*}
\begin{aligned}
\mathcal{L}_{B,q}e^{x_1/h} u &= 0 \quad \mbox{in } M\\
u|_{E} & = 0.
\end{aligned}
\end{equation*}
\end{remark}

Recall that we have assumed that $M_0$ is simple, hence, we can extend $M_0$ slightly so that the extended domain $\tilde{M}_0$ is also simple. Let $W$ be a point outside $M_0$. As the manifold is simple one can consider a global geodesic normal coordinate system $(r,w)$ on $\tilde{M}_0$ center at $W$.
We pose the form of of the solution $u$ by
\begin{equation}
u = e^{\frac{1}{h}(x_1 + ir)}(v_s(x) + r_1(x)) - e^{l/h}b(x).
\end{equation}
Let us now study the conditions on $v_s, r_1, l, b$ so that $u$ satisfies the conditions
\begin{equation}\label{mag_shrd_eqn}
\begin{aligned}
\mathcal{L}_{\B{1},\q{1}} u &= 0\\
u|_{E} &= 0.
\end{aligned}
\end{equation}
Putting the solution $u$ in the equation $\mathcal{L}_{\B{1},\q{1}}u = 0$ and writing $\rho = -(x_1 + ir)$, we get
\begin{equation}\label{eqn_r_1}
\begin{aligned}
-e^{\rho/h}h^2\mathcal{L}_{\B{1},\q{1}}e^{-\rho/h}r_1(x) =& \left[ -\lvert \nabla \rho \rvert^2 + h\left(2\langle \nabla \rho, \nabla.\rangle + \Delta\rho + 2i\langle d\rho,\B{1}\rangle \right) \right] v_s(x) \\
&- e^{\rho/h}\mathcal{L}_{\B{1},\q{1}}e^{l/h}b(x) +h^2\mathcal{L}_{\B{1},\q{1}}v_s(x)
\end{aligned}
\end{equation}
Now, we will choose $v_s, l, b$ so that the right hand side of the above equation is bounded in $L^2(M)$.
Observe that we already have $\langle \nabla \rho, \nabla \rho \rangle = 0$.
Next we seek for a solution $v_s$ satisfying
\begin{equation*}
2\langle \nabla \rho, \nabla v_s\rangle  + \Delta\rho v_s + 2i\langle d\rho,\B{1}\rangle v_s = 0
\end{equation*}

A calculation shows
\begin{equation*}
-\Delta_g \rho = \lvert g \rvert^{-1/2}\partial_{x_1}\left(\frac{\lvert g\rvert^{1/2}}{c} \right) + \lvert g \rvert^{-1/2}\partial_{r}\left(\frac{\lvert g\rvert^{1/2}}{c}i \right) = \frac{1}{c}\bar{\partial} log \frac{\lvert g\rvert}{c^2},
\end{equation*}
where $\bar{\partial} = \frac{1}{2} \left(\partial_{x_1} + i\partial_r \right)$.
A solution of the above transport equation is given by
\begin{equation*}
v_s = \lvert g \rvert^{-1/4}c^{1/2}e^{i\Phi(r)}b_1(x_1,r)b_2(w)
\end{equation*}
where $\overline{\partial}{\Phi}(r) + \frac{1}{2}\left( \B{1}_1 + i\B{1}_r \right)= 0$, $\overline{\partial}b_1(x_1,r) = 0$. The function $b_2(w)$ is smooth and supported in $\{w \in \mathbb{R}^{n-2}: \lvert w \rvert <\delta \}$ for $\delta>0$ small. Here $\B{1}_r$ denotes the coefficient of $\B{1}$ in the direction of $r$ .

Now,
\begin{equation*}
\begin{aligned}
e^{-\rho/h}F & = \mathcal{L}_{\B{1},\q{1}}e^{l/h}b(x)\\
			 & = \frac{1}{h^2}\left[ \langle \nabla l, \nabla l\rangle_g  \right]e^{l/h} + \frac{1}{h}\left[ 2\langle \nabla l, \nabla b \rangle_g + 2i\langle \B{1},dl \rangle_g b + (\Delta_g l)b \right]e^{l/h} \\
			 & \quad + \left[ (\Delta_g b) + 2i\langle \B{1}, db \rangle_g + \tilde{q}_1 b \right]e^{l/h}.
\end{aligned}
\end{equation*}
In order to make the support condition true, we seek for a solution to the equation
\begin{equation*}
\begin{aligned}
\langle \nabla l, \nabla l \rangle_g &= 0 \quad in \quad M\\
l|_{E} &= -\rho.
\end{aligned}
\end{equation*}
In order to avoid duplicating the solution $l=-\rho$ we impose the condition
\begin{equation*}
\partial_{\nu}l|_{E} = \partial_\nu(\rho)|_{E}.
\end{equation*}
To construct a solution, pick boundary normal coordinates $(t,\xi)$ near $E$ such that $t$ are coordinates along $E$ and $\xi$ is perpendicular to $E$. Note that this $\xi$ is different from the $\xi$ defined in the previous sections as frequency variable in Carleman estimates.

Suppose in a small neighborhood of a point on $E$, $l$ takes the form of a power series
\begin{equation*}
l(t,\xi) = \sum_{j=0}^{\infty}a_j(t)\xi^j.
\end{equation*}
Then, in boundary normal coordinates
\begin{equation*}
\begin{aligned}
\nabla l 	= (\nabla_t l, \partial_\xi l)
			= \left( \sum_{j=0}^{\infty}\nabla_t a_j(t)\xi^j, \sum_{j=0}^{\infty} ja_j(t)\xi^{j-1} \right)
\end{aligned}
\end{equation*}
Hence, $\langle \nabla l, \nabla l \rangle_g = 0$ implies
\begin{equation*}
\begin{aligned}
0 =& 	\sum_{j+k=2} jka_ja_k
		+ \xi\left( \sum_{j+k=3} jka_ja_k \right)
		+ \xi^2\left( \sum_{j+k=3} jka_ja_k \right)+ \dots\\
&	+ \sum_{j+k=0} \langle \nabla_t a_j, \nabla_t a_k \rangle_{g_t}
				+ \xi\left( \sum_{j+k=1} \langle \nabla_t a_j, \nabla_t a_k\rangle_{g_t} \right)\\
	&	+ \xi^2\left( \sum_{j+k=2} \langle \nabla_t a_j, \nabla_t a_k \rangle_{g_t} \right) + \dots
\end{aligned}
\end{equation*}
Considering each power of $\xi$ separately gives a sequence of equations
\begin{equation}\label{eqn_a_j}
\sum_{j+k = m} \langle \nabla_t a_j, \nabla_t a_k \rangle_{g_t} + \sum_{j+k = m+2} jk a_j a_k = 0, \qquad \text{for each}\quad m=0,1,2,3, \dots.
\end{equation}
One can determine $a_0$ and $a_1$ from the boundary conditions. If $m\geq 1$, and all $a_j$ are known for $j\leq m$, the only unknown part of \eqref{eqn_a_j} is $2(m+1)a_1a_{m+1}$. Note that $a_1 = \partial_\nu \rho$, and recall that $E \subset \{x\in \partial_{-}M : \Re (\partial_\nu \rho) > {\delta} \}$. So, we can divide the equation involving $a_1a_{m+1}$ by $a_1$ and solve for $a_{m+1}$.

Observe that there is no guarantee that this power series of $l$ will converge outside $\xi = 0$. However, we will construct a smooth function for which the Taylor series coincides with the above power series at $\xi = 0$. For this purpose let us consider $\chi : \mathbb{R} \rightarrow [0,1]$ be a smooth cutoff function which is supported in $[0,1]$ and $\chi \equiv 1$ in $[-\frac{1}{2}, \frac{1}{2}]$.
Define $p_j = \max\limits_{k \leq j} \{ \lVert a_k(t)\rVert_{C^{k}(\Omega)} , 1 \}$.
Now construct $l$ as
\begin{equation*}
l = \sum_{j=0}^{\infty} a_j(t) \chi(p_j \xi) \xi^j = \sum_{j=0}^{\infty} c_j(t) \xi^j.
\end{equation*}
This defines a smooth function on $\Omega$ whose Taylor series at $\xi = 0$ is same as the power series calculated earlier. By the same calculation as above, it shows that the coefficient of $\xi^m$ in $\langle \nabla l, \nabla l \rangle_g$ is
\begin{equation*}
\sum_{j+k = m} \langle \nabla_t c_j, \nabla_t c_k\rangle_{g_t} + \sum_{j+k = m}\left( \partial_{\xi}c_j + (j+1)c_{j+1} \right)\left( \partial_{\xi}c_k + (k+1)c_{k+1} \right).
\end{equation*}
Now observe that, by the above construction $c_j = a_j$ for $j\leq k$ in the region $\xi \leq \frac{1}{p_k}$. As $a_j$ are solutions of the above equations, so, in the region $\xi \leq \frac{1}{p_k}$ we have $\langle \nabla l, \nabla l \rangle_g = \mathcal{O}(\xi^k)$. Now we can do it for any $p$, which makes $\langle \nabla l, \nabla l \rangle_g = \mathcal{O}(d(x,E)^{\infty})$.

As $\partial_{\nu} \Re(\rho)>0$ on $E$ and $\Re(l) |_{E} = -\Re(\rho)|_{E}$ with $\partial_{\nu} \Re(l) = \partial_{\nu} \Re(\rho)$ we can say that in a neighborhood of $E$
\begin{equation}
\Re\ l(x) = -\Re\ \rho(x) - k(x),
\end{equation}
where $k(x) \simeq (d(x,E))$.

Similarly we can construct $b$ as an approximate solution for the equation
\begin{equation*}
\begin{aligned}
-i\langle\nabla l , \nabla b\rangle + \langle \nabla l, \B{1}b \rangle &= \mathcal{O}(d(x,E)^{\infty})\\
b|_{E} &= v_s|_{E}.
\end{aligned}
\end{equation*}
Observe that if $b$ solves the above equation, then, we can multiply a smooth cutoff function with $b$ and will still solve the above equation. Hence, we can assume that $b$ is supported in a very small neighborhood of $E$. Then
\begin{equation*}
\begin{aligned}
\lvert h^2 \mathcal{L}_{\B{1},\q{1}} (e^{l/h}b) \rvert &= \lvert e^{l/h}\rvert \left( \mathcal{O}(d(x,E)^\infty) + \mathcal{O}(h^2) \right)\\
&= e^{-\Re(\rho/h)}e^{-k/h}\left( \mathcal{O}(d(x,E)^\infty) + \mathcal{O}(h^2) \right).
\end{aligned}
\end{equation*}
If $d(x,E) \leq h^{1/2}$, for $h$ small, then $\lvert h^2 \mathcal{L}_{\B{1},\q{1}} (e^{l/h}b) \rvert$ is of order $e^{\phi/h}\mathcal{O}(h^2)$. On the other hand if $d(x,E) \geq h^{1/2}$ the term is again of order $e^{\phi/h}\mathcal{O}(h^2)$ due to $e^{-k/h}$.

Therefore, $e^{-\rho/h}v_s - e^{l/h}b = 0$ on $E$.
Denote
\begin{equation*}
f = e^{\rho/h}h^2\mathcal{L}_{\B{1},\q{1}} (e^{-\rho/h}v_s - e^{l/h}b),
\end{equation*}
where $\lVert f \rVert_{L^2(M)} = \mathcal{O}(h^2)$.
Thanks to Proposition \ref{existance of r} and Remark \ref{existance} we get a solution $\p{r}$ of the following equation:
\begin{equation}\label{eqn_r}
\begin{aligned}
\left(e^{-x_1/h}\mathcal{L}_{\B{1},\q{1}}e^{x_1/h}\p{r}_1\right) &= e^{ir/h}f, \qquad \mbox{in }\quad M\\
\p{r}_1|_{E} = 0.
\end{aligned}
\end{equation}
Taking $r_1 = e^{-ir/h}\p{r}_1$, we have produced a solution $u$, of the equation \eqref{mag_shrd_eqn} having the form
\begin{equation*}
u = e^{-\rho/h}(v_s(x) + r_1) - e^{l/h}b \quad with \quad supp(u|_{\partial M}) \subset \Gamma_{D}
\end{equation*}
and $\lVert r_1 \rVert_{L^2(M)}  = \mathcal{O}(h)$ as $h \rightarrow 0$.
Similarly for the Carleman weight $\phi(x) = -x_1$, using a standard technique as in \cite[Proposition 4.4]{F_K_S_U} we obtain a solution $v$ of the equation $(\mathcal{L}_{\B{2},\q{2}})^* v = 0$ in $M$, having the form
\begin{equation*}
v(x) = e^{-(x_1-ir)/h}(\tilde{v}_s(x) + r_2)
\end{equation*}
where $\lVert r_2 \rVert_{H^1(M)}  = \mathcal{O}(h)$ as $h \rightarrow 0$ and
\begin{align*}
\tilde{v}_s = \lvert g \rvert^{-1/4}c^{1/2}e^{i\tilde{\Phi}(x)}b_3(x_1,r)b_4(w)
\end{align*}
where $\partial\tilde{\Phi}(x) + \frac{1}{2}\left( \overline{\B{2}}_1 - i\overline{\B{2}}_r \right)= 0$, ${\partial}b_3(x_1,r) = 0$ and $b_4(w)$ is smooth and supported in a $B(0,\delta)$ for $\delta>0$ small. Here, $\partial = 1/2\left( \partial_{x_1} - i\partial_r \right)$.

\section{Integral identity} \label{int_id}
Let us now consider $\B{j}$, $\q{j}$ as in the Theorem \ref{main_theorem}.
Let $u_j$ be solution of the equation $\mathcal{L}_{\B{j},\q{j}} = 0$, $(j=1,2)$ in $M$ and have the same boundary data with $supp(u_j|_{\partial M})\subset\Gamma_{D}$.
Let $v$ be a solution of the equation $(\mathcal{L}_{\B{2},\q{2}})^*v = 0$ in $M$.
Recall that $(\mathcal{L}_{\B{2},\q{2}})^*$ denotes $L^2$ adjoint of $\mathcal{L}_{\B{2},\q{2}}$. Using $\mathcal{C}_{\B{1},\q{1}}^{\Gamma_D,\Gamma_N} = \mathcal{C}_{\B{1},\q{1}}^{\Gamma_D,\Gamma_N}$ we get the following proposition:
\begin{proposition}\label{integral_id_proposition}
If $\Gamma_D, \Gamma_N \subset \partial M$ and assume that $\phi,u_1,u_2,v$ and $\mathcal{L}_{\B{j},\q{j}}$ are as above, then
\begin{equation}\label{integral_identity}
\begin{aligned}
\int_{M} &\left[ 2i\langle (\B{1} - \B{2}), du_1\rangle - (\tilde{q}^{(1)} - \tilde{q}^{(2)})u_1 \right] \overline{v} \,dV \\
&= -i\int_{\partial M\setminus\Gamma_{N}} d_{\B{2}}[u_1-u_2](\nu) \overline{v}\,dS,
\end{aligned}
\end{equation}
where $\tilde{q}^{(l)} = \q{l} - i[\lvert g \rvert^{-1/2}\partial_{x_k}(\lvert g \rvert^{1/2}g^{jk}\B{l}_j)] + \lvert \B{l} \rvert^2_g$ and $l=1,2$.
\end{proposition}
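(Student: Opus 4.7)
My plan is to set $u := u_1 - u_2$ and exploit two boundary reductions from the hypotheses. First, since both $u_1, u_2$ have Dirichlet data supported in $\Gamma_D$ and these Dirichlet traces coincide (they lie in the same Cauchy data set $\mathcal{C}^{\Gamma_D,\Gamma_N}$), we have $u|_{\partial M} \equiv 0$. Second, since $B^{(1)} = B^{(2)}$ on $\partial M$, the operators $d_{B^{(1)}}$ and $d_{B^{(2)}}$ agree on boundary traces; combining this with the fact that the Neumann data $d_{B^{(j)}}u_j(\nu)$ coincide on $\Gamma_N$ forces $d_{B^{(2)}}u(\nu) = 0$ on $\Gamma_N$. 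Hence any boundary integral of $d_{B^{(2)}}u(\nu)\bar v$ over $\partial M$ collapses to an integral over $\partial M\setminus \Gamma_N$.

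The next step is the interior computation. Using the local expression \eqref{step 1} for $\mathcal{L}_{B,q}$, the difference of operators contains no principal part:
\begin{equation*}
\bigl(\mathcal{L}_{B^{(2)},q^{(2)}} - \mathcal{L}_{B^{(1)},q^{(1)}}\bigr) u_1 = -2i\langle B^{(2)} - B^{(1)}, du_1\rangle + (\tilde q^{(2)} - \tilde q^{(1)}) u_1.
\end{equation*}
Since $\mathcal{L}_{B^{(1)},q^{(1)}}u_1 = 0 = \mathcal{L}_{B^{(2)},q^{(2)}}u_2$, it follows that
\begin{equation*}
\mathcal{L}_{B^{(2)},q^{(2)}} u = 2i\langle B^{(1)} - B^{(2)}, du_1\rangle - (\tilde q^{(1)} - \tilde q^{(2)}) u_1,
\end{equation*}
which is exactly the integrand on the left-hand side of \eqref{integral_identity}.

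To finish, I would pair this identity with $\bar v$, integrate over $M$, and apply Green's identity for the operator $\mathcal{L}_{B^{(2)},q^{(2)}} = d_{\overline{B^{(2)}}}^* d_{B^{(2)}} + q^{(2)}$. Since $v$ solves the adjoint equation $\mathcal{L}_{B^{(2)},q^{(2)}}^* v = 0$, the interior part of Green's identity drops out, and the boundary contributions split into one term proportional to $d_{B^{(2)}}u(\nu)\bar v$ and one proportional to $u\,\overline{d_{\overline{B^{(2)}}}v(\nu)}$; the latter vanishes because $u|_{\partial M} = 0$. Combining this with the reduction of the first paragraph yields \eqref{integral_identity}. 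The main obstacle is keeping track of the precise constants and signs in Green's identity for the complex-magnetic Schrödinger operator: the cross terms $iBu$ in $d_B u$ and the conjugate pairing with $d_{\overline{B^{(2)}}}v$ must combine so that, after using $u|_{\partial M}=0$, the surviving boundary term is exactly the one written on the right-hand side (with the stated factor of $-i$). Once this Green's identity is established the remainder is an immediate substitution.
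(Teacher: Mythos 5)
Your proposal matches the paper's proof in both structure and substance: the same decomposition $u = u_1 - u_2$, the same use of the Cauchy-data equality and $B^{(1)}=B^{(2)}$ on $\partial M$ to obtain $u|_{\partial M}=0$ and $d_{B^{(2)}}u(\nu)|_{\Gamma_N}=0$, the same reduction of $\mathcal{L}_{B^{(2)},q^{(2)}}u$ to the first-order difference acting on $u_1$, and the same application of the magnetic Green's identity with $\mathcal{L}_{B^{(2)},q^{(2)}}^*v=0$ killing the interior term and $u|_{\partial M}=0$ killing the second boundary term. The paper simply states the Green's identity (including the factor $-i$) without derivation, so the bookkeeping you flag as the remaining obstacle is also treated as routine there.
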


\begin{proof}
Observe that a straight forward calculation implies
\begin{equation*}
(\mathcal{L}_{\B{2},\q{2}})^*w = \lvert g \rvert^{-1/2}(\partial_{x_j} -  i\overline{\B{2}_j}) \lvert g \rvert^{1/2}{g^{jk}}(\partial_{x_k} -  i\overline{\B{2}_k})w + \overline{\q{2}}w = \overline{\mathcal{L}_{\B{2},\q{2}}}w
\end{equation*}
where $()^*$ denotes the $L^2$ dual.
Then for any $u,w \in H^1(M)$ one has
\begin{equation*}
\begin{aligned}
&\langle \mathcal{L}_{\B{2},\q{2}}u,w\rangle_{L^2(M)} - \langle u,(\mathcal{L}_{\B{2},\q{2}})^*w\rangle_{L^2(M)} \\
	   =&-i\int_{\partial M} d_{\B{2}}u(\nu)\overline{w}\ dS
	     +i\int_{\partial M} u (\overline{d_{\overline{\B{2}}}\ w(\nu)})\ dS.
\end{aligned}
\end{equation*}
Let us now take $u=(u_1-u_2)$ and $w = v$, then by our assumptions $u|_{\partial M} = 0$. Putting this into the above identity we get
\begin{equation*}
\begin{aligned}
\langle \mathcal{L}_{\B{2},\q{2}}u,v\rangle_{L^2(M)} - \langle u,(\mathcal{L}_{\B{2},\q{2}})^*v\rangle_{L^2(M)}
	   =-i\int_{\partial M} d_{\B{2}}(u_1-u_2)(\nu)\overline{v}\ dS.
\end{aligned}
\end{equation*}
Now, as $(\mathcal{L}_{\B{2},\q{2}})^*v = 0$ and $\mathcal{L}_{\B{j},\q{j}}u_j = 0$, we get
\begin{equation*}
\begin{aligned}
2i\int_{M} \langle(\B{1} - \B{2}),du_1\rangle_g\overline{v}\ dV + \int_{M} (\tilde{q}^{(2)} - \tilde{q}^{(1)})u_1\overline{v} = -i\int_{\partial M\setminus\Gamma_{N}} du(\nu) \overline{v}\ dS.
\end{aligned}
\end{equation*}
\end{proof}

Now we estimate the right hand side integral in \eqref{integral_identity}. Using the boundary Carleman estimate \eqref{bdy_Carleman_estimate} we will show that the right hand side integral goes to 0 as $h \rightarrow 0$.
We have for $u=u_1-u_2$ and $\phi = -x_1$,
\begin{equation*}
\begin{aligned}
&\left\lvert \int_{\partial M\setminus\Gamma_{N}} d_{\B{2}}u(\nu) \overline{v} \D S \right\rvert^2\\
\leq &	C\langle e^{\phi/h}\partial_{\nu} u, e^{\phi/h}\partial_{\nu} u \rangle_{\partial M\setminus\Gamma_{N}} \\
\leq &	C\left\langle \lvert \partial_{\nu}\phi\rvert \partial_{\nu} (e^{\phi/h}u), \partial_{\nu} (e^{\phi/h}u) \right\rangle_{ \{ \partial_{\nu} \phi \leq -\delta_1 \} }, \qquad \mbox{using \eqref{bdy_Carleman_estimate} and } \delta_1>0\mbox{ small}\\
\leq& Ch\lVert e^{\phi/h}\mathcal{L}_{\B{2},\q{2}} u \rVert^2_{L^2(M)} + \left\langle \lvert \partial_{\nu}\phi\rvert \partial_{\nu} (e^{\phi/h}u), \partial_{\nu} (e^{\phi/h}u) \right\rangle_{ \{ \partial_{\nu} \phi \geq 0 \} }.
\end{aligned}
\end{equation*}

Now, as
$\B{1} = \B{2}$ on $\partial M$, we get $\partial_{\nu}(e^{\phi/h}u)|_{ \{ \partial_{\nu}\geq 0 \} } \equiv 0$.
Hence,
\begin{equation*}
\begin{aligned}
\left\lvert \int_{\partial M\setminus\Gamma_{N}} d_{\B{2}}u(\nu) \overline{v} \D S \right\rvert
\leq& Ch^{1/2}\lVert e^{\phi/h} \left( \mathcal{L}_{\B{2},\q{2}} - \mathcal{L}_{\B{1},\q{1}} \right) u_1 \rVert_{L^2(M)}\\
\leq& Ch^{1/2}\left\lVert \left\langle e^{\phi/h}\left( \B{1} - \B{2} \right), du_1 \right\rangle_g \right\rVert_{L^2(M)}\\
&+ Ch^{1/2}\lVert e^{\phi/h}(\q{1} - \q{2})u_1 \rVert_{L^2(M)}.
\end{aligned}
\end{equation*}
\begin{equation*}
\begin{gathered}
\mbox{Note that}
\qquad \lim\limits_{h \to 0} h\left\lVert \langle e^{\phi/h}\left( \B{1} - \B{2} \right), du_1 \rangle_g \right\rVert_{L^2(M)} = 0\\
\mbox{and}\quad
\lim_{h \to 0} h^{1/2}\lVert e^{\phi/h}(\q{1} - \q{2})u_1 \rVert_{L^2(M)} = 0.
\end{gathered}
\end{equation*}
\begin{equation}\label{bdy etimate_1}
\mbox{So,}\qquad
\lim\limits_{h\to 0} \left\lvert h\int_{\partial M\setminus\Gamma_{N}} d_{\B{2}}u(\nu) \overline{v} \D S \right\rvert = 0.
\end{equation}

Moreover if we assume $\B{1} = \B{2}$ in $M$, then
\begin{equation}\label{bdy etimate}
\lim\limits_{h\to 0} \left\lvert \int_{\partial M\setminus\Gamma_{N}} d_{\B{2}}u(\nu) \overline{v} \D S \right\rvert = 0.
\end{equation}

\section{Recovering the lower order perturbations} \label{agrt}
Here we follow the method in \cite{F_K_S_U} to recover the lower order perturbations. Note that the Fourier transforms given in this section are in the classical sense.

\subsection{The magnetic field}
Let $u$ be a solution of $\mathcal{L}_{\B{1},\q{1}}u = 0$ of the form
\begin{equation*}
u(x_1,r,w) = e^{\frac{1}{h}(x_1 + ir)}(v_s(\p{x})+r_1(x)) + \p{u},
\end{equation*}
with $\p{u} = e^{l/h}b$ and $supp(u|_{\partial M}) \subset \Gamma_{D}$.

Here, $v_s(\p{x}) = \lvert g\rvert^{-1/4}c^{1/2}e^{i\Phi} b_1(x_1,r)b_2(w)$ where
$\overline{\partial}\Phi(x) + \frac{1}{2}\left[{\B{1}_1} + i{\B{1}_r} \right]= 0$, $\overline{\partial}{b}_1(x_1,r) = 0$ and $b_2(w)$ is smooth and supported in a $B(0,\delta)$ for $\delta>0$ small and $b_2(0) = 1$.

Let $u_2$ solve
\begin{equation*}
\begin{aligned}
\mathcal{L}_{\B{2},\q{2}} u_2 &= 0 \quad in \quad M\\
u_2|_{\Gamma_D} &= u|_{\Gamma_D}. 
\end{aligned}
\end{equation*}
Then, by our assumption of Theorem \ref{main_theorem} we have $d_{\B{2}}u_2(\nu)|_{\Gamma_{N}} = d_{\B{1}}u(\nu)|_{\Gamma_{N}}$.

Let $v \in H^1(M)$ be a solution of $(\mathcal{L}_{\B{2},\q{2}})^* v = 0$ of the form
\begin{equation*}
v(x_1,r,w) = e^{-\bar{\rho}/h}(\tilde{v}_s(\p{x})+r_2(x)),
\end{equation*}
with $supp(v|_{\partial M}) \subset \Gamma_{N}$.
Similarly,
\begin{equation*}
\tilde{v}_s(\p{x}) = \lvert g \rvert^{-1/4}c^{1/2}e^{\tilde{\Phi}}b_3(x_1,r)b_4(w)
\end{equation*}
with $\partial\tilde{\Phi}(x) + 2\left[ \overline{\B{2}_1} - i\overline{\B{2}_r} \right]= 0$, $\bar{\partial}b_3(x_1,r) = 0$ and $b_4(w)$ is smooth and supported in a $B(0,\delta)$ for $\delta>0$ small and $b_4(0) = 1$.

Writing $B=(\B{1} - \B{2})$ in Proposition \ref{integral_id_proposition} we get
\begin{equation*}
\begin{aligned}
&\int_{\partial M\setminus\Gamma_{N}} d(u-u_2)(\nu) \overline{v}\ dS\\
=&\int_{M} \left[2i\langle B, du\rangle\overline{v} - (\tilde{q}_1 - \tilde{q}_2)u\overline{v}  \right]\ dV\\
=& 	-2i\frac{1}{h}\int_M (B_1 + iB_r) \{v_s\overline{\tilde{v}_s} + r_1\overline{\tilde{v}_s}+\overline{r_2}v_s + r_1\overline{r_2} \}\ dV \\
&	-2i\int_M \langle B,d v_s\rangle  \{ \overline{\tilde{v_s}} + \overline{r_2} \}\ dV
	-2i\int_M \langle B,dr_1(x)\rangle_g (\overline{\tilde{v}_s}+\overline{r_2})\ dV
\end{aligned}
\end{equation*}\begin{equation}\label{full integral id}
\begin{aligned}&	-2i\int_M \langle B,d\p{u}(x)\rangle_g \overline{v(x)} dV\\
&	-\int_M (\tilde{q}^{(1)} - \tilde{q}^{(2)}) \{v_s(x)\overline{\tilde{v}_s(x)} + r_1\overline{\tilde{v}_s(x)}+\overline{r_2}v_s + r_1\overline{r_2} + \p{u}(x)\overline{v(x)}\}\ dV.
\end{aligned}
\end{equation}
First let us consider the 4th term on the right hand side of the last equality:
\begin{equation*}
\begin{aligned}
&\left\lvert \int_M \langle B,d\p{u}(x)\rangle_g \overline{v(x)}\ dV \right\rvert\\
&\leq \left\lvert \int_M \langle B,[h^{-1}bdl + db]\rangle_ge^{l/h} e^{-\rho/h}(\overline{v_s(x)} + \overline{r_2(x)})\ dV \right\rvert \\
&\leq \int_M e^{-k/h}\left\lvert \left(\frac{1}{h}\langle B,dl\rangle_gb + \langle B,db\rangle_g\right)(\overline{v_s(x)} + \overline{r_2(x)})\right\rvert\ dV\\
&\leq C (\lVert \nabla l \rVert_{L^2(M)} + \lVert \nabla b \rVert_{L^2(M)})(\lVert v_s \rVert_{L^2(M)} + \lVert r_2 \rVert_{L^2(M)}) \simeq \mathcal{O}(1).
\end{aligned}
\end{equation*}
Also observe that
\begin{equation*}
\begin{aligned}
\left\lvert \int_M (\tilde{q}^{(1)} - \tilde{q}^{(2)}) \p{u}(x)\overline{v(x)}\ dV\right\rvert
&\leq  	 \int_M e^{-\frac{k}{h}}\left\lvert(\tilde{q}^{(1)} - \tilde{q}^{(2)})b(x) \{\overline{v_s(x)} + \overline{r_2(x)} \}\right\rvert\ dV \\
&\simeq \mathcal{O}(h).
\end{aligned}
\end{equation*}
Now, multiplying the above identity \eqref{full integral id} by $h$ and taking $h \rightarrow 0$, we see that \eqref{bdy etimate} implies
\begin{equation*}
\int_M (B_1+iB_r) v_s\overline{\tilde{v_s}} \ dV = 0
\end{equation*}
Using the same analysis as in \cite[Subsection 6.2]{F_K_S_U} one can show that the above integral identity implies $d\B{1}=d\B{2}$ on $M$.
By simply connectedness of the domain $M$, $d(\B{1}-\B{2}) = 0$ implies
\begin{equation}\label{recovery of mag term}
(B^{(1)} - B^{(2)}) = dp, \quad \mbox{for some } p\in C^\infty(M). 
\end{equation}
Using the assumption $\B{1}=\B{2}$ on $\partial M$ we have $dp = 0$ on $\partial M$.
Hence subtracting a suitable constant we get $p = 0$ on $\partial M$.

\subsection{The potential}
Using $(B^{(1)} - B^{(2)}) = dp$ with $p\in C^\infty(M)$, $p|_{\partial M} = 0$ the gauge invariance allows us to assume that $\B{1} = \B{2}$ on $M$. Using the identity \eqref{full integral id} and replacing $\left(\B{1} - \B{2}\right) = 0$ we get
\begin{equation*}
\begin{aligned}
\int_M (\tilde{q}_1 - \tilde{q}_2) \{v_s(\p{x})\overline{\tilde{v}_s(\p{x})} + r_1\overline{\tilde{v}_s(\p{x})}+\overline{r_2}v_s + r_1\overline{r_2} \}\ dV \\
= \int_{\partial M\setminus\Gamma_{N}} d(u-u_2)(\nu) \overline{v}\ dS +\int_M (\tilde{q}_1 - \tilde{q}_2) \p{u}(x)\overline{v(x)}\ dV
\end{aligned}
\end{equation*}
\begin{equation*}
\mbox{where}\qquad
\tilde{q}_j = \q{j} - i[\lvert g \rvert^{-1/2}\partial_{x_l}(\lvert g \rvert^{1/2}g^{kl}\B{j}_k)] + \lvert \B{j} \rvert^2_g, \quad for \quad j = 1,2.
\end{equation*}
Now, assuming $\B{1} = \B{2}$ on $M$ we get $\tilde{q}_1 - \tilde{q}_2 = \q{1} - \q{2}$ on $M$.
Hence, using the estimate \eqref{bdy etimate} and the technique used in previous section to show the last integral is of order $h$, we get
\begin{equation*}
\begin{aligned}
\lim\limits_{h \rightarrow 0} &\int_M (\tilde{q}_1 - \tilde{q}_2) \{v_s(\p{x})\overline{\tilde{v}_s(\p{x})} + r_1\overline{\tilde{v}_s(\p{x})}+\overline{r_2}v_s + r_1\overline{r_2} \}\ dV = 0\\
\implies \quad &\int_M (\q{1} - \q{2})v_s(\p{x})\overline{\tilde{v}_s(\p{x})}\ dV = 0.
\end{aligned}
\end{equation*}
Using the same calculation given in \cite[Subsection 6.1]{F_K_S_U} we get $\q{1}=\q{2}$ in $M$.

\section*{Acknowledgments} The research was supported by the Doctoral fellowship of TIFR CAM, Bangalore, India and the Airbus Group Corporate Foundation Chair in ``Mathematics of Complex Systems" established at TIFR CAM and TIFR ICTS, Bangalore, India.
The author would like to thank the reviewer for making crucial comments and valuable suggestions which helped the author immensely. Finally the author would like to thank Venkateswaran P. Krishnan for his support and guidance throughout.



\medskip
Received September 2016; revised January 2018.
\medskip

{\it E-mail address: }arkatifr@gmail.com\\

\end{document}